\titleformat{\subsection}[runin]
  {\normalfont\bfseries}{\thesubsection}{1em}{}
\newcommand{\G}{\mathbb{G}}
\newcommand{\A}{\mathbb{A}}
\numberwithin{equation}{subsection}
\newtheorem{theorem}{Theorem}[subsection]
\newtheorem{lemma}[theorem]{Lemma}
\newtheorem{conjecture}[theorem]{Conjecture}
\newtheorem{corollary}{Corollary}[theorem]
\newtheorem{proposition}[theorem]{Proposition}
\theoremstyle{definition}
\newtheorem{definition}[theorem]{Definition}
\newtheorem{example}[theorem]{Example}
\DeclareMathOperator{\hol}{hol}
\DeclareMathOperator{\Kost}{Kost}
\DeclareMathOperator{\Hom}{Hom}
\DeclareMathOperator{\res}{res}
\DeclareMathOperator{\Ext}{Ext}
\DeclareMathOperator{\gr}{gr}
\DeclareMathOperator{\id}{id}
\DeclareMathOperator{\IC}{IC}
\DeclareMathOperator{\End}{End}
\DeclareMathOperator{\Sym}{Sym}
\DeclareMathOperator{\Spec}{Spec}
\DeclareMathOperator{\RHom}{RHom}
\DeclareMathOperator{\Eis}{Eis}
\DeclareMathOperator{\ext}{ext}
\DeclareMathOperator{\triv}{triv}
\DeclareMathOperator{\colim}{colim}
\DeclareMathOperator{\std}{std}
\DeclareMathOperator{\loc}{loc}
\DeclareMathOperator{\rsd}{rsd}
\DeclareMathOperator{\Bun}{Bun}
\DeclareMathOperator{\ev}{ev}
\DeclareMathOperator{\pos}{pos}
\DeclareMathOperator{\disj}{disj}
\DeclareMathOperator{\pt}{pt \!}
\DeclareMathOperator{\add}{add}
\DeclareMathOperator{\Av}{Av}
\DeclareMathOperator{\sgn}{sgn}
\begin{document}

\title{Nearby cycles of Whittaker sheaves on Drinfeld's compactification}
\author{Justin Campbell}
\maketitle

\begin{abstract}

In this article we study the perverse sheaf on Drinfeld's compactification obtained by applying the geometric Jacquet functor (alias nearby cycles) to a nondegenerate Whittaker sheaf. Namely, we describe its restrictions along the defect stratification in terms of the Langlands dual Lie algebra, in particular showing that this nearby cycles sheaf is tilting. We also describe the subquotients of the monodromy filtration using the Picard-Lefschetz oscillators introduced by S. Schieder, including a proof that the subquotients are semisimple without reference to any theory of weights. This argument instead relies on the action of the Langlands dual Lie algebra on compactified Eisenstein series (or what is essentially the same, cohomology of quasimaps into the flag variety) constructed by Feigin, Finkelberg, Kuznetsov, and Mirkovi\'{c}.

\end{abstract}

\section{Introduction}

\subsection{} In this article we study the degeneration of a Whittaker sheaf on Drinfeld's compactification to an object of the principal series category (for us ``sheaf" will mean D-module, but our results carry over \emph{mutatis mutandi} to $\ell$-adic sheaves in characteristic $p > 0$ for $\ell \neq p$). In \cite{AG} this degeneration is implemented by a gluing functor, meaning $!$-extension from the general locus followed by $!$-restriction to the special fiber. This operation produces a complex of sheaves, which the authors link to the constant term of Poincar\'{e} series.

Nearby cycles provide a different, although closely related, method of degeneration. Notably, if we apply nearby cycles (more precisely, the geometric Jacquet functor of \cite{ENV}) to the perverse cohomological shift of a nondegenerate Whittaker sheaf, the result is still a perverse sheaf. It comes equipped with a nilpotent endomorphism, which gives rise to the so-called monodromy filtration. We describe the restrictions of this nearby cycles sheaf to the defect strata in terms of the Langlands dual Lie algebra, showing in particular that they are perverse, i.e. ours is a geometric construction of a tilting sheaf on Drinfeld's compactification. We also describe the associated graded sheaf of nearby cycles with respect to the monodromy filtration, along with its Lefschetz $\mathfrak{sl}_2$-action, in terms of the Picard-Lefschetz oscillators, which are certain factorizable perverse sheaves with $\mathfrak{sl}_2$-action introduced in \cite{S1}.

In \cite{C1}, the author introduced a similar construction in the finite-dimensional situation of a flag variety. Namely, we showed that the nearby cycles of a one-parameter family of nondegenerate Whittaker sheaves on a flag variety is the big projective sheaf, which is isomorphic to the tilting extension of the constant perverse sheaf on the big cell. Thus in both cases taking nearby cycles of Whittaker sheaves produces tilting sheaves. We also remark that both of these constructions have the feature that they degenerate a D-module with irregular singularities to one with regular singularities.

\subsection{} Fix an algebraically closed field $k$ of characteristic zero and a smooth connected curve $X$ over $k$. Let $G$ be a connected reductive group over $k$, choose a Borel subgroup $B$ with unipotent radical $N$, and put $T := B/N$. We write $I$ for the set of vertices of the Dynkin diagram of $G$, and $Z_G$ for the center. We write $D(Y)$ for the derived category of D-modules on a variety $Y$, and $D_{\hol}(Y)$ denotes the full subcategory consisting of complexes with holonomic cohomologies.

We denote by $\overline{\Bun}_{N^{\omega}}$ Drinfeld's compactification of the moduli stack $\Bun_{N^{\omega}}$ of canonically twisted $N$-bundles. There is a canonical map \[ \ev : \Bun_{N^{\omega}} \longrightarrow \G_a, \] constructed for example in \cite{AG}.

Let $\chi$ be a nontrivial exponential D-module on $\G_a$. As in \cite{FGV}, one shows that $\ev^{\Delta}\chi$ extends cleanly to a perverse sheaf $\mathscr{W}_1$ on $\overline{\Bun}_{N^{\omega}}$, where $\ev^{\Delta}$ denotes cohomologically normalized inverse image along the smooth morphism $\ev$. The sheaf $\mathscr{W}_1$ has irregular singularities because $\chi$ does, so by ``perverse sheaf" we mean any holonomic D-module.

Choose a dominant regular cocharacter $\gamma : \mathbb{G}_m \to T$, which determines an action \[ a_{\gamma} : \mathbb{G}_m \times \overline{\Bun}_{N^{\omega}} \to \overline{\Bun}_{N^{\omega}}. \] We will denote by $\mathscr{W}$ the perverse sheaf $a^{\Delta}_{\gamma}\mathscr{W}_1$ on $\G_m \times \overline{\Bun}_{N^{\omega}}$. Consider the embeddings \[ \overline{\Bun}_{N^{\omega}} \stackrel{i}{\longrightarrow} \A^1 \times \overline{\Bun}_{N^{\omega}} \stackrel{j}{\longleftarrow} \G_m \times \overline{\Bun}_{N^{\omega}}. \] The $\G_m$-equivariant object $i^!j_!\mathscr{W}$ of $D(\overline{\Bun}_{N^{\omega}})$ is studied in \cite{AG}.

We will consider instead a closely related perverse sheaf on $\overline{\Bun}_{N^{\omega}}$, namely the nearby cycles $\Psi(\mathscr{W})$ with respect to the projection $\A^1 \times \overline{\Bun}_{N^{\omega}} \to \A^1$. This sheaf is unipotently $\G_m$-monodromic, and (the logarithm of) the monodromy endomorphism of nearby cycles agrees with the obstruction to $\G_m$-equivariance, a nilpotent endomorphism. The composition $\Psi \circ a_{\gamma}^{\Delta}$ for a $\G_m$-equivariant oen-parameter family is studied in \cite{ENV} under the name \emph{geometric Jacquet functor}.

The object $i^!j_!\mathscr{W}$ can be recovered from $\Psi(\mathscr{W})$ as the derived invariants of monodromy, and conversely $\Psi(\mathscr{W})$ is the derived coinvariants of the natural $H^{\bullet}(\G_m)$-action on $i^!j_!\mathscr{W}$ (for the latter see Definition \ref{ncconstr} and Proposition \ref{ncprop}). One can summarize by saying that nearby cycles and the ``gluing functor" $i^!j_!$ are related by Koszul duality. This motivates the study of $\Psi(\mathscr{W})$, since these gluing functors (in the extended Whittaker situation) play a role in the proof of the geometric Langlands equivalence sketched in \cite{G}. Nearby cycles has the advantage of being t-exact, and therefore $\Psi(\mathscr{W})$ is amenable to more concrete description, e.g. it has a composition series which is finite over any quasi-compact open substack of $\overline{\Bun}_{N^{\omega}}$.

We also remark that up to isomorphism $\Psi(\mathscr{W})$ does not depend on the choice of $\chi$. In particular $\Psi(\mathscr{W})$ is Verdier self-dual, since $\Psi$ commutes with Verdier duality and the Verdier dual of $\mathscr{W}$ is the clean extension of $\ev^{\Delta}\chi^{-1}$.

\label{intro2}

\subsection{} The stack $\overline{\Bun}_{N^{\omega}}$ has a stratification by defect of the generalized $N$-bundle. Our first main theorem is a description of the restrictions of $\Psi(\mathscr{W})$ to the strata. In particular we will prove that this sheaf is tilting with respect to the defect stratification, meaning its $!$- and $*$-restrictions to the strata are perverse (although generally not lisse).

Write $\Lambda$ for the lattice of cocharacters of $T$, and denote by $\Lambda^{\pos} \subset \Lambda$ the positive coweights with respect to $B$. The defect stratification of $\overline{\Bun}_{N^{\omega}}$ is indexed by $\Lambda^{\pos}$, and for each $\mu \in \Lambda^{\pos}$ we denote the locally closed stratum embedding by \[ \mathfrak{j}_{=\mu} : \overline{\Bun}_{N^{\omega},=\mu} \longrightarrow \overline{\Bun}_{N^{\omega}}. \] If $\mu = \sum_{i \in I} n_i\alpha_i$ for some nonnegative integers $n_i$ (here $\alpha_i$ is the simple coroot corresponding to $i$) then the corresponding configuration space of points in $X$ is defined by \[ X^{\mu} := \prod_{i \in I} X^{(n_i)}, \] and there is a smooth surjection \[ \mathfrak{m}_{\mu} : \overline{\Bun}_{N^{\omega},=\mu} \longrightarrow X^{\mu}. \]

In \cite{BG2} the authors introduced certain factorizable perverse sheaves $\Omega^{\mu}$ on the configuration spaces $X^{\mu}$. They can be characterized as follows: the $!$-fiber of $\Omega^{\mu}$ at a point $\sum_i \mu_ix_i$ is \[ \bigotimes_i C^{\bullet}(\check{\mathfrak{n}})^{\mu_i}, \] where $\check{\mathfrak{n}}$ is the unipotent radical of a Borel subalgebra in the Langlands dual Lie algebra $\check{\mathfrak{g}}$, and $C^{\bullet}(\check{\mathfrak{n}})$ is its $\check{T}$-graded cohomological Chevalley complex.

\begin{theorem}

For any $\mu \in \Lambda^{\pos}$ there is an isomorphism $\mathfrak{j}_{=\mu}^!\Psi(\mathscr{W}) \tilde{\to} \mathfrak{m}_{\mu}^{\Delta}\Omega^{\mu}$.

\label{strata}

\end{theorem}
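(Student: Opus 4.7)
The plan is to combine factorization over $X^\mu$, a local Zastava model, and the description of nearby cycles as hyperbolic restriction from \cite{ENV}.

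I would first verify that both sides of the desired isomorphism are compatible with the factorization structure on $X^\mu$. The sheaf $\Omega^\mu$ is factorizable by construction. On the other side, cleanness of $\mathscr{W}_1$ together with the additivity of $\ev$ with respect to disjoint defect make $\mathscr{W}_1$ compatible with the factorization structure of the defect stratification of $\overline{\Bun}_{N^\omega}$; since $\Psi$ commutes with the relevant smooth pullbacks and proper pushforwards, the same property passes to $\mathfrak{j}_{=\mu}^!\Psi(\mathscr{W})$. It then suffices to identify the two sides fiberwise at a point $\mu\cdot x \in X^\mu$ where the entire defect is concentrated at a single $x \in X$.

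Near such a point, $\overline{\Bun}_{N^\omega}$ is \'etale-locally a product of an open substack of the stratum with the compactified Zastava space $\overline{Z}^\mu$, based at the maximally defective point $0^\mu$. Because $\gamma$ is dominant regular, the action $a_\gamma$ restricts on the Zastava factor to a contracting $\G_m$-action with unique fixed point $0^\mu$. Applying the main result of \cite{ENV} to this setup identifies the $!$-fiber of $\Psi(\mathscr{W})$ at $\mu\cdot x$ with the hyperbolic restriction of $\mathscr{W}_1|_{\overline{Z}^\mu}$ at $0^\mu$; since the full attracting locus of $0^\mu$ is all of $\overline{Z}^\mu$, this is the cohomology of $\ev^\Delta\chi$ on the open Zastava $Z^\mu$, up to a cohomological normalization coming from the dimension of the attracting cell.

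It remains to show that this cohomology equals $C^\bullet(\check{\mathfrak{n}})^\mu$. I would invoke (or reprove in this setting) the Whittaker coefficient calculation of \cite{BG2, FGV}, which is the geometric incarnation of the Casselman--Shalika / Kostant theorem and was the original route to the definition of $\Omega^\mu$. Concretely, I would stratify $Z^\mu$ by the type of the defect divisor and compare the resulting spectral sequence with the Chevalley complex of $\check{\mathfrak{n}}$, using factorization to reduce to the rank-one computation on $X^{(n)}$, where the Whittaker cohomology is a skyscraper in the correct degree. I expect this final identification to be the main obstacle: beyond matching a geometric cohomology computation with a purely algebraic object ($\check{T}$-graded Chevalley cohomology of $\check{\mathfrak{n}}$), one must upgrade the pointwise identification to an isomorphism of factorization sheaves on all of $X^\mu$, while carefully tracking cohomological shifts, twists by the canonical bundle $\omega_X$, and the sign conventions inherent in the Chevalley complex.
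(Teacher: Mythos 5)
Your outline follows the same route as the paper's second proof (factorization, the Zastava local model, contraction along the $\G_m$-action, and the identification of the twisted cohomology of Zastava with $\Omega$, which is exactly Raskin's Theorem 4.6.1 and is fine to quote). But the step you treat as routine is where the actual content lies. You claim that the $!$-fiber of $\Psi(\mathscr{W})$ at a maximally degenerate point is the Whittaker cohomology of the open Zastava $Z^{\mu}$, citing \cite{ENV}. After the contraction principle (valid because $\Psi(\mathscr{W})$ is monodromic) this amounts to computing $\pi^{\mu}_{!}\Psi(\mathscr{W}_{Z^{\mu}})$, and to identify that with $\Psi(\pi^{\mu}_{!}(\cdot))$ you must commute unipotent nearby cycles with the \emph{non-proper} pushforward $\pi^{\mu}_{!}$ — equivalently, show that the canonical map ${}^{\prime}\mathfrak{j}^{-}_{!}\Psi(\mathscr{W}_{Z}) \to \Psi((\id_{\G_m}\times{}^{\prime}\mathfrak{j}^{-})_{!}\mathscr{W}_{Z})$ is an isomorphism, so that the proper map $\overline{\pi}$ of the compactified Zastava can take over. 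This commutation is false for general sheaves, and \cite{ENV} does not supply it; the paper's second proof exists precisely to establish it, via the ULA statement of Lemma \ref{nclem} applied over $\A^1\times\Bun_G$ (with the ULA input for $\IC$ of the strata of $\overline{\Bun}_{B^-}$ coming from \cite{C2}), Verdier duality, and a factorization argument. Note also that the attracting locus of the central points inside $\overline{Z}^{\mu}$ is the open Zastava, not all of $\overline{Z}^{\mu}$: the compactification enters only through the properness of $\overline{\pi}$, which is why the boundary ${}^{\prime}\mathfrak{j}^{-}$ issue cannot be avoided.

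A second, smaller gap: reducing to a fiberwise identification at points $\mu\cdot x$ is not enough, even with factorization, because the stratum $\overline{\Bun}_{N^{\omega},=\mu}$ has large fibers over $X^{\mu}$; you need to know that $\mathfrak{j}_{=\mu}^{!}\Psi(\mathscr{W})$ is pulled back along $\mathfrak{m}_{\mu}$. The paper obtains this from $N^{\omega}(\A)$-equivariance of $\Psi(\mathscr{W})$ (Lemma \ref{adelicinv} together with Proposition \ref{adelicstrata}): the key mechanism is that the character family $\widetilde{\chi}^{x}_{\ext}$ degenerates at $0$ to the trivial character, so the nearby cycles sheaf is adelically invariant even though $\mathscr{W}$ itself is only equivariant against a nondegenerate character. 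You acknowledge that the pointwise identification must be upgraded to an isomorphism over $X^{\mu}$, but you offer no mechanism for this; by contrast, the piece you single out as the main obstacle — matching the Zastava cohomology with $C^{\bullet}(\check{\mathfrak{n}})^{\mu}$ — is the part that can simply be cited.
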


In fact, $\Omega^{\mu}$ is indecomposable and $\mathfrak{m}_{\mu}$ has contractible fibers, so the isomorphism in the theorem is automatically unique up to scaling. Since $\Psi(\mathscr{W})$ is Verdier self-dual, Theorem \ref{strata} implies that it is tilting with respect to the defect stratification.

\subsection{} Recall that the \emph{monodromy filtration} on $\Psi(\mathscr{W})$ is the unique filtration by perverse sheaves \[ F_{-m} \subset \cdots \subset F_{m-1} \subset F_m = \Psi(\mathscr{W}) \] such that for all $1 \leq i \leq n$, the $i^{\text{th}}$ power of the monodromy endomorphism induces an isomorphism $F_i/F_{i-1} \tilde{\to} F_{-i}/F_{-i-1}$. The associated graded sheaf $\gr \Psi(\mathscr{W})$ has an action of the so-called Lefschetz $\mathfrak{sl}_2$ such that the lowering operator is induced by the monodromy endomorphism and $F_i/F_{i-1}$ has weight $i$ for the Cartan operator.

We now formulate a description of $\gr \Psi(\mathscr{W})$ in terms of certain factorizable perverse sheaves with $\mathfrak{sl}_2$-action, called the \emph{Picard-Lefschetz oscillators} after \cite{S1} and \cite{S2} (see also Section  3.2 of \emph{loc. cit.}).

First let us define the Picard-Lefschetz oscillators on $X^{(n)}$. Let $\std$ denote the standard $2$-dimensional representation of the Lefschetz $\mathfrak{sl}_2$, and write $\sgn$ for the sign character of the symmetric group $\Sigma_n$. Now the $\Sigma_n \times \mathfrak{sl}_2$-representation $\sgn \otimes \std^{\otimes n}$ (here $\Sigma_n$ also permutes the $\std$ factors) and the $\Sigma_n$-torsor $X^n_{\disj} \to X^{(n)}_{\disj}$ give rise to a local system on $X^{(n)}_{\disj}$ with $\mathfrak{sl}_2$-action. Then $\mathscr{P}_n$ is defined as the intermediate (i.e. Goresky-MacPherson) extension to $X^{(n)}$ of the perverse cohomological shift of this local system. The perverse sheaf $\mathscr{P}_n$ carries an $\mathfrak{sl}_2$-action by functoriality, and is evidently semisimple.

Recall that a \emph{Kostant partition} of $\mu \in \Lambda^{\pos}$ is an expression of the form $\mu = \sum_{\beta \in R^+} n_{\beta}\beta$, where the $n_{\beta}$ are nonnegative integers and $R^+$ denotes the set of positive coroots. Denote by $\Kost(\mu)$ the set of Kostant partitions of $\mu$. To any $\mathfrak{k} \in \Kost(\mu)$ given by $\mu = \sum_{\beta \in R^+} n_{\beta}\beta$ we attach a partially symmetrized power \[ X^{\mathfrak{k}} := \prod_{\beta \in R^+} X^{(n_{\beta})}, \] which is equipped with a canonical finite map $\iota^{\mathfrak{k}} : X^{\mathfrak{k}} \to X^{\mu}$.

The \emph{Picard-Lefschetz oscillator} on $X^{\mu}$ is defined by the formula \[ \mathscr{P}^{\mu} := \bigoplus_{\mathfrak{k} \in \Kost(\mu)} \iota^{\mathfrak{k}}_*(\underset{\beta \in R^+}{\boxtimes} \mathscr{P}_{n_{\beta}}). \] In particular, we have $\mathscr{P}^{n\alpha} = \mathscr{P}_n$ for $\alpha$ a simple coroot. By construction, $\mathscr{P}^{\mu}$ is a semisimple perverse sheaf with $\mathfrak{sl}_2$-action.

\begin{theorem}

There is an $\mathfrak{sl}_2$-equivariant isomorphism
\begin{equation}
\gr \Psi(\mathscr{W}) \tilde{\longrightarrow} \bigoplus_{\mu \in \Lambda^{\pos}} \mathfrak{j}_{=\mu,!*}\mathfrak{m}_{\mu}^{\Delta}\mathscr{P}^{\mu}.
\label{maineq}
\end{equation}

\label{main}

\end{theorem}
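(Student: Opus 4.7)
The plan has two main steps. First, I would show that $\gr \Psi(\mathscr{W})$ is semisimple as a perverse sheaf. Second, I would identify the semisimple summands stratum-by-stratum, using Theorem \ref{strata} and the factorization structure of $\Omega^{\mu}$. The hard part, as signaled in the abstract, is the semisimplicity, which must be established without appealing to any theory of weights.

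For the semisimplicity, I would invoke the action of the Langlands dual Lie algebra $\check{\mathfrak{g}}$ on compactified Eisenstein series (equivalently on cohomology of quasimaps into $G/B$) constructed by Feigin-Finkelberg-Kuznetsov-Mirkovi\'{c}. The strategy is to realize the nilpotent monodromy endomorphism of $\Psi(\mathscr{W})$ as the action of a nilpotent element $\check{f} \in \check{\mathfrak{g}}$ on a suitable functorial cohomology of $\Psi(\mathscr{W})$; Jacobson-Morozov together with complete reducibility of finite-dimensional $\check{\mathfrak{g}}$-representations then forces the associated graded of the monodromy filtration to be semisimple with the prescribed Lefschetz $\mathfrak{sl}_2$-action. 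Producing this comparison between the monodromy and $\check{f}$, and then promoting semisimplicity from cohomology to the sheaf $\Psi(\mathscr{W})$ itself by exploiting factorization on $\overline{\Bun}_{N^{\omega}}$, is the technical heart of the argument and the step I expect to be the main obstacle.

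Granted semisimplicity, $\gr \Psi(\mathscr{W})$ decomposes as $\bigoplus_{\mu} \mathfrak{j}_{=\mu,!*} \mathscr{G}^{\mu}$ for some semisimple $\mathfrak{sl}_2$-equivariant perverse sheaves $\mathscr{G}^{\mu}$ on the strata, and the task becomes to identify $\mathscr{G}^{\mu}$ with $\mathfrak{m}_{\mu}^{\Delta} \mathscr{P}^{\mu}$. Since $\Psi(\mathscr{W})$ is tilting, $\mathfrak{j}_{=\mu}^!$ is compatible with the monodromy filtration on it, and combined with Theorem \ref{strata} this yields
\[ \mathfrak{j}_{=\mu}^! \gr \Psi(\mathscr{W}) \cong \mathfrak{m}_{\mu}^{\Delta} \gr \Omega^{\mu}. \]
Inducting on $\mu$ in the closure order on $\Lambda^{\pos}$, I would peel off the already-identified contributions $\mathfrak{j}_{=\nu,!*} \mathfrak{m}_{\nu}^{\Delta} \mathscr{P}^{\nu}$ for $\nu < \mu$ and match the remaining summand against $\mathfrak{m}_{\mu}^{\Delta} \mathscr{P}^{\mu}$.

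The last input is the computation of $\gr \Omega^{\mu}$ on $X^{\mu}$, which is a factorizable calculation. The $!$-fibers of $\Omega^{\mu}$ are weight components of tensor products $\bigotimes_i C^{\bullet}(\check{\mathfrak{n}})^{\mu_i}$; the monodromy coming from the regular dominant dilation $\gamma$ acts compatibly with the natural filtration on $C^{\bullet}(\check{\mathfrak{n}})$ whose associated graded is $\bigwedge^{\bullet}\check{\mathfrak{n}}$; and on the $n_{\beta}$-th wedge power of the $\beta$-weight space of $\check{\mathfrak{n}}$ the induced Lefschetz $\mathfrak{sl}_2$-action is by $\sgn \otimes \std^{\otimes n_{\beta}}$. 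Indexing the answer by Kostant partitions of $\mu$ and pushing forward along $\iota^{\mathfrak{k}}$ matches the definition of $\mathscr{P}^{\mu}$, modulo IC contributions supported on deeper strata of $X^{\mu}$ that are absorbed into the inductive step.
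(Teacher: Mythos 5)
Your overall architecture (semisimplicity first, then stratumwise identification) matches the paper's, but the step you yourself flag as the heart of the argument is where the proposal has a genuine gap, and the mechanism you propose is not the one that works. You want to identify the monodromy of $\Psi(\mathscr{W})$ with a nilpotent element of $\check{\mathfrak{g}}$ acting on ``a suitable functorial cohomology'' and then invoke Jacobson--Morozov and complete reducibility; but semisimplicity of a $\check{\mathfrak{g}}$-module built from cohomology of $\Psi(\mathscr{W})$ does not descend to semisimplicity of the perverse sheaf $\gr \Psi(\mathscr{W})$, and no bridge from the module level back to the sheaf level is offered. The paper never makes such a bridge either: in its first proof semisimplicity comes from Mochizuki's weight theory, and in the weight-free second proof semisimplicity is \emph{not} transported from the Eisenstein side at all. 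Instead it is proved by induction and factorization: away from the main diagonal $\gr\Psi(\mathscr{W}_{Z^{\mu}})$ is controlled by smaller $\mu$, the only possible diagonal subquotients are two copies of $\Delta_*\IC_X$ occurring exactly when $\mu$ is a coroot, and explicit $\Ext^1$-vanishing (via $H^{i}(\Delta^!\mathscr{L})=0$ for $i\leq 1$, Lemma \ref{plclean}, and the degree estimates from \cite{BG2}) splits these off inside the $\std$-isotypic component. The FFKM input (Proposition 4.9, i.e.\ the relation $[e_{\mu},f_{\mu}]=h_{\mu}$ on $\overline{\mathfrak{p}}_*\IC_{\overline{\Bun}_B}$) is used only for one pinpoint purpose: to show that the indecomposable three-step subquotient $\mathscr{M}$ of Lemma \ref{threestep} has nonzero descent obstruction to $\overline{\Bun}_{N^{\omega}}/\G_m$ (the obstruction class is computed to be $-1\otimes h_{\mu}$, Lemmas \ref{desclemt} and \ref{desclem}), hence is not $\G_m$-equivariant, hence the monodromy acts nontrivially on it.

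This non-equivariance is exactly what settles the point your proposal handles only heuristically: when $\mu$ is a coroot you must rule out the trivial $\mathfrak{sl}_2$-action on the two copies of $\Delta_*\IC_X$, and your fiberwise computation (``the filtration on $C^{\bullet}(\check{\mathfrak{n}})$ with associated graded $\bigwedge^{\bullet}\check{\mathfrak{n}}$ induces $\sgn\otimes\std^{\otimes n_{\beta}}$'') assumes rather than proves how the monodromy filtration interacts with the stratum restrictions. Note also that Theorem \ref{strata} computes $\mathfrak{j}_{=\mu}^!\Psi(\mathscr{W})$, not $\mathfrak{j}_{=\mu}^!\gr\Psi(\mathscr{W})$ with its $\mathfrak{sl}_2$-structure; the tilting property does not give you ``$\mathfrak{j}_{=\mu}^!\gr\Psi \cong \mathfrak{m}_{\mu}^{\Delta}\gr\Omega^{\mu}$'' for free, since restriction need not commute with passing to the monodromy-filtration subquotients and $\Omega^{\mu}$ carries no preferred such filtration. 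To repair the proposal you would need (i) the reduction, via factorization on Zastava spaces, of everything to the diagonal contributions at coroots, (ii) the construction of the three-step subquotient $\mathscr{M}$ and the descent-obstruction computation tying its monodromy to $h_{\mu}\neq 0$ through the FFKM relation, and (iii) the $\Ext$-vanishing argument giving semisimplicity of the remaining isotypic pieces.
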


In particular $\gr \Psi(\mathscr{W})$ is semisimple. Using the theorem we can compute the kernel of the monodromy operator, whose simple subquotients are the lowest weight sheaves for the Lefschetz $\mathfrak{sl}_2$-action.

\begin{corollary}

The canonical morphism $\mathfrak{j}_{=0,!}\IC_{\Bun_{N^{\omega}}} \to \Psi(\mathscr{W})$ is an isomorphism onto the kernel of the monodromy operator.

\label{kermon}

\end{corollary}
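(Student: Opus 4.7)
The plan is a three-step argument: construct a canonical morphism, show it factors through $\ker N$, and verify that the resulting map is an isomorphism.

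\textbf{Construction.} Specializing Theorem \ref{strata} to $\mu = 0$: since $X^0$ is a point, $\Omega^0 = k$, and $\mathfrak{m}_0$ is the structure map $\Bun_{N^\omega} \to \pt$, we obtain $\mathfrak{j}_{=0}^!\Psi(\mathscr{W}) \cong \IC_{\Bun_{N^\omega}}$. As $\mathfrak{j}_{=0}$ is an open embedding, $\mathfrak{j}_{=0}^* = \mathfrak{j}_{=0}^!$, and the $(\mathfrak{j}_{=0,!}, \mathfrak{j}_{=0}^*)$-adjunction produces the canonical morphism $\phi: \mathfrak{j}_{=0,!}\IC_{\Bun_{N^\omega}} \to \Psi(\mathscr{W})$, unique up to scalar because $\End(\IC_{\Bun_{N^\omega}}) \cong k$.

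\textbf{Factoring through $\ker N$.} The restriction $\mathfrak{j}_{=0}^*\Psi(\mathscr{W}) = \IC_{\Bun_{N^\omega}}$ is the (shifted) constant D-module, manifestly $\G_m$-equivariant for $a_\gamma$. Section \ref{intro2} identifies $N$ with the obstruction to $\G_m$-equivariance, so $N$ acts by zero on $\mathfrak{j}_{=0}^*\Psi(\mathscr{W})$. Since $\Hom(\mathfrak{j}_{=0,!}\IC_{\Bun_{N^\omega}}, \Psi(\mathscr{W})) \cong k$, the composition $N \circ \phi$ is controlled by its restriction to $\Bun_{N^\omega}$, which vanishes. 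Hence $\phi$ factors as $\bar\phi: \mathfrak{j}_{=0,!}\IC_{\Bun_{N^\omega}} \to \ker N \hookrightarrow \Psi(\mathscr{W})$.

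\textbf{Isomorphism.} Injectivity of $\bar\phi$ is immediate: base change gives $i^!\mathfrak{j}_{=0,!} = 0$ (where $i$ is the inclusion of the boundary $\overline{\Bun}_{N^\omega} \setminus \Bun_{N^\omega}$), so $\mathfrak{j}_{=0,!}\IC_{\Bun_{N^\omega}}$ has no nonzero perverse subsheaves supported off $\Bun_{N^\omega}$, and any morphism from it whose restriction to the open stratum is injective must itself be injective. For surjectivity, I would invoke Theorem \ref{main}: strictness of $N$ with respect to the monodromy filtration yields $\gr \ker N \cong \ker \bar N$ inside $\gr \Psi(\mathscr{W}) = \bigoplus_\mu \mathfrak{j}_{=\mu,!*}\mathfrak{m}_\mu^\Delta \mathscr{P}^\mu$, namely the direct sum over $\mu$ of the IC extensions of the lowest-weight components of $\mathscr{P}^\mu$ under the Lefschetz $\mathfrak{sl}_2$-action. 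Comparing this with the composition factors of $\mathfrak{j}_{=0,!}\IC_{\Bun_{N^\omega}}$ — which are controlled by $i^*\IC_{\overline{\Bun}_{N^\omega}}$ along with $\IC_{\overline{\Bun}_{N^\omega}}$ itself — then forces the cokernel of $\bar\phi$ in the perverse heart to have vanishing composition series, and hence $\bar\phi$ is surjective.

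\textbf{Main obstacle.} The crux is the composition-factor matching in the last step: recognizing that the lowest-weight pieces of the Picard-Lefschetz oscillators $\mathscr{P}^\mu$ for $\mu > 0$ account exactly for the boundary composition factors of $\mathfrak{j}_{=0,!}\IC_{\Bun_{N^\omega}}$, equivalently for the simple constituents of $i^*\IC_{\overline{\Bun}_{N^\omega}}$. This reflects a nontrivial identity between the singularities of Drinfeld's compactification along the defect strata and the Langlands-dual combinatorics governing the oscillators.
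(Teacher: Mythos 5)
Your construction of the canonical map and the check that it factors through the kernel of the monodromy operator are fine, but both halves of the ``isomorphism'' step have genuine problems. The injectivity argument is wrong as written: recollement for the open embedding $\mathfrak{j}_{=0}$ with closed complement $i$ gives $i^{*}\mathfrak{j}_{=0,!}=0$, not $i^{!}\mathfrak{j}_{=0,!}=0$, and the conclusion you draw from it --- that $\mathfrak{j}_{=0,!}\IC_{\Bun_{N^{\omega}}}$ has no perverse subobjects supported on the boundary, so that any map which is injective over the open stratum is injective --- is false. Already in the local model of Example \ref{simpex}, $Z^{\alpha}\cong X\times\A^{1}$, the $!$-extension from the open stratum is $\IC_{X}\boxtimes T$ with $T$ the $!$-extension of the constant perverse sheaf from $\mathring{\A}^{1}:=\A^{1}\setminus\{0\}$ to $\A^{1}$, and $T$ contains the delta sheaf at $0$ as a subobject; correspondingly the canonical map from this $!$-extension to $\IC_{Z^{\alpha}}$ is an isomorphism over the open stratum yet has nonzero kernel $\IC_{Z^{\alpha}_{=\alpha}}$. (On $\overline{\Bun}_{N^{\omega}}$ the same phenomenon is visible in the filtration of $\mathfrak{j}_{=0,!}\IC_{\Bun_{N^{\omega}}}$ by intermediate extensions of the $\Omega^{\nu}$ recalled in the proof of Lemma \ref{threestep}.) Injectivity really does require the tilting property coming from Theorem \ref{strata} and self-duality, which is the paper's one-line argument: the cone of $\mathfrak{j}_{=0,!}\mathfrak{j}_{=0}^{*}\Psi(\mathscr{W})\to\Psi(\mathscr{W})$ is $i_{*}i^{*}\Psi(\mathscr{W})$, which is perverse because $\Psi(\mathscr{W})$ is tilting, so the kernel of the map, being ${}^{p}\!H^{-1}$ of this cone, vanishes.

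For surjectivity you correctly reduce to matching the lowest-weight constituents of $\bigoplus_{\mu}\mathfrak{j}_{=\mu,!*}\mathfrak{m}_{\mu}^{\Delta}\mathscr{P}^{\mu}$ against the composition factors of $\mathfrak{j}_{=0,!}\IC_{\Bun_{N^{\omega}}}$, but you then flag this matching as the ``main obstacle'' and leave it unproved; since it is precisely the content of the corollary, the proposal stops short of a proof at the decisive point. (Your description of the composition factors of $\mathfrak{j}_{=0,!}\IC_{\Bun_{N^{\omega}}}$ as controlled by $i^{*}\IC_{\overline{\Bun}_{N^{\omega}}}$ is also not the relevant statement; what is used is Corollary 4.5 of \cite{BG2}, i.e.\ the filtration whose subquotients are intermediate extensions of the $\Omega^{\nu}$.) The paper closes this gap by passing to Zastava space: it suffices to show that ${}^{\prime}\mathfrak{j}^{\lambda}_{=0,!}\IC_{\Bun_{N^{\omega}}}$ and the kernel of monodromy have the same class in the Grothendieck group of $Z^{\lambda}$; away from the main diagonal this follows by factorization and induction, and along the diagonal both sides contain exactly one copy of $\Delta_{*}\IC_{X}$ --- for the $!$-extension by Corollary 4.5 of \cite{BG2}, and for the kernel of monodromy by Theorem \ref{zmain} via the lowest-weight constituents of the Picard--Lefschetz oscillators. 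Working directly on $\overline{\Bun}_{N^{\omega}}$, your write-up offers no substitute for this computation, so as it stands the argument is incomplete even granting the (unjustified) injectivity step.
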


\subsection{} A key step in the proof of Theorem \ref{main} is establishing that $\gr \Psi(\mathscr{W})$ is semisimple. This can be shown using Mochizuki's theory of weights for holonomic D-modules, developed in \cite{M}. We also present a proof of Theorem \ref{main} which does not use weights for irregular D-modules, but instead relies Proposition 4.9 in \cite{FFKM}. We now state a conjecture containing this proposition as a special case, which we hope to return to in future work. In particular we will explain how this conjecture can be deduced from the (conjectural) geometric Langlands equivalence and its postulated compatibility with compactified Eisenstein series.

Write $\overline{\mathfrak{p}} : \overline{\Bun}_B \to \Bun_G$ and $\overline{\mathfrak{q}} : \overline{\Bun}_B \to \Bun_T$ for the canonical morphisms. The functor of \emph{compactified Eisenstein series} $\Eis_{!*} : D(\Bun_T) \to D(\Bun_G)$, introduced in \cite{BG1}, is defined by \[ \Eis_{!*} = \overline{\mathfrak{p}}_*(\IC_{\overline{\Bun}_B} \otimes^! \, \overline{\mathfrak{q}}^!(-)). \]

Recall that geometric class field theory assigns to a $\check{T}$-local system $E_{\check{T}}$ on $X$ a character sheaf (i.e. multiplicative line bundle with connection) $\mathscr{L}(E_{\check{T}})$ on $\Bun_T$. We write $\check{\mathfrak{g}}_{E_{\check{T}}}$ for the local system on $X$ attached to $E_{\check{T}}$ and the $\check{T}$-representation $\check{\mathfrak{g}}$.

\begin{conjecture}

The DG Lie algebra $H^{\bullet}(X,\check{\mathfrak{g}}_{E_{\check{T}}})$ acts on $\Eis_{!*}\mathscr{L}(E_{\check{T}})$.

\label{dualact}

\end{conjecture}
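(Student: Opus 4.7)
The plan is to deduce Conjecture \ref{dualact} from the conjectural geometric Langlands equivalence, following the strategy alluded to by the author. Write $\mathbb{L}_G : D(\Bun_G) \tilde{\to} \operatorname{IndCoh}_{\operatorname{Nilp}}(\operatorname{LocSys}_{\check{G}})$ for the hypothetical equivalence, and let $\operatorname{LocSys}_{\check{T}}$, $\operatorname{LocSys}_{\check{B}}$, $\operatorname{LocSys}_{\check{G}}$ be the stacks of $\check{T}$-, $\check{B}$-, $\check{G}$-local systems on $X$. Under the torus case of Langlands, geometric class field theory identifies $\mathscr{L}(E_{\check{T}})$ with the skyscraper sheaf $k_{E_{\check{T}}}$ at the point $E_{\check{T}} \in \operatorname{LocSys}_{\check{T}}$. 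The expected compatibility with compactified Eisenstein series asserts that $\mathbb{L}_G \circ \Eis_{!*}$ agrees with a spectral Eisenstein functor implemented by pushforward-pullback along the correspondence
\[ \operatorname{LocSys}_{\check{T}} \longleftarrow \operatorname{LocSys}_{\check{B}} \longrightarrow \operatorname{LocSys}_{\check{G}}. \]

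Applying this to $k_{E_{\check{T}}}$ yields an ind-coherent sheaf $\mathscr{F}$ on $\operatorname{LocSys}_{\check{G}}$ whose set-theoretic support is the single point given by the induced $\check{G}$-local system $E_{\check{G}}$. At this stage I invoke the general principle of derived deformation theory: for any reasonable derived stack $\mathcal{Y}$ and any field-valued point $y \in \mathcal{Y}$, the shifted tangent complex $T_y\mathcal{Y}[-1]$ carries a canonical DG Lie algebra structure, and any ind-coherent sheaf on $\mathcal{Y}$ set-theoretically supported at $y$ acquires a natural action of this DG Lie algebra (via its action on the fiber at $y$). At $E_{\check{G}} \in \operatorname{LocSys}_{\check{G}}$ the tangent complex is $C^{\bullet}(X, \check{\mathfrak{g}}_{E_{\check{T}}})[1]$, because the adjoint bundle of $E_{\check{G}}$ is precisely $\check{\mathfrak{g}}_{E_{\check{T}}}$. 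Shifting by $-1$ recovers, up to quasi-isomorphism, the DG Lie algebra $H^{\bullet}(X, \check{\mathfrak{g}}_{E_{\check{T}}})$ appearing in the conjecture, and transporting the action on $\mathscr{F}$ back through $\mathbb{L}_G^{-1}$ produces the desired action on $\Eis_{!*}\mathscr{L}(E_{\check{T}})$.

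The hard part is not the deformation-theoretic output but the input. One must both establish the Langlands equivalence and rigorously formulate and verify its compatibility with $\Eis_{!*}$; the latter is especially subtle because the compactified Eisenstein functor is built from the intermediate extension $\IC_{\overline{\Bun}_B}$, whose spectral counterpart is not obvious and must be defined with proper account of nilpotent singular support. A possibly more direct route avoiding Langlands would be to globalize Proposition 4.9 of \cite{FFKM} by constructing the Chevalley-type action of $\check{\mathfrak{g}}_{E_{\check{T}}}$ on $\Eis_{!*}\mathscr{L}(E_{\check{T}})$ through an explicit factorizable model based on $\overline{\Bun}_B$ and its quasimap analogues; the principal obstacle there is assembling these cohomological operations coherently into a single DG Lie algebra action rather than a discrete list of isolated morphisms.
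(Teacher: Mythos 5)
You are reviewing a statement that the paper does not prove: Conjecture \ref{dualact} is stated as a conjecture, the introduction explicitly defers its deduction from the (itself conjectural) geometric Langlands equivalence to future work, and what the paper actually supplies is much weaker — in Section \ref{mainproof2} it constructs, for the trivial $\check{T}$-local system, the action of specific generators of $\check{\mathfrak{g}} \otimes H^{\bullet}(X)$ on $\overline{\mathfrak{p}}_*\IC_{\overline{\Bun}_B}$, and it cites Proposition 4.9 of \cite{FFKM} for the relations in $\check{\mathfrak{g}} \otimes H^0(X)$, which is all that the second proof of Theorem \ref{main} uses. Your proposal is an elaboration of the deferred strategy and is conditional twice over: on the full equivalence and on a precise spectral description of the compactified functor $\Eis_{!*}$ (which, as you note, is genuinely unclear because of $\IC_{\overline{\Bun}_B}$ and singular-support issues). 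As you yourself concede, this is a strategy, not a proof, so it cannot be taken as establishing the statement; your closing suggestion to instead globalize \cite{FFKM} ``by hand'' is in fact what the paper does, and the hard point you identify there (assembling the operations into a coherent DG Lie algebra action, with $H^{\bullet}(X)$ in nonzero degrees) is precisely why the statement remains a conjecture.

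Beyond conditionality, the one substantive step of your sketch has a concrete gap. The spectral Eisenstein image of the skyscraper at $E_{\check{T}}$ is a pushforward along $\mathrm{LocSys}_{\check{B}} \to \mathrm{LocSys}_{\check{G}}$ of a sheaf living on the fiber of $\mathrm{LocSys}_{\check{B}} \to \mathrm{LocSys}_{\check{T}}$ over $E_{\check{T}}$; that fiber parametrizes $\check{B}$-local systems whose associated graded is $E_{\check{T}}$, and its image in $\mathrm{LocSys}_{\check{G}}$ consists of all $\check{G}$-local systems induced from such, not only the split one $E_{\check{G}}$. Already for $\check{G}$ of semisimple rank one and $E_{\check{T}}$ trivial — exactly the case the paper needs — on a complete curve of genus at least one there are nonsplit self-extensions of the trivial local system (classified by $H^1(X,k) \neq 0$), so the set-theoretic support is strictly larger than the point $E_{\check{G}}$. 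Consequently the deformation-theoretic principle you invoke, that an ind-coherent sheaf supported at a point $y$ is a module over the DG Lie algebra $T_y[-1]$, does not apply as stated; one would need either a finer conjectural description of the spectral counterpart of $\Eis_{!*}$ (e.g. as an object of ind-coherent sheaves on a suitable formal completion) or an argument along the whole image of the fiber, and making that precise is the missing content, not a routine application of formal moduli theory.
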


For the trivial $\check{T}$-local system, which is the only case we will use, the conjecture says that $\check{\mathfrak{g}} \otimes H^{\bullet}(X)$ acts on $\overline{\mathfrak{p}}_*\IC_{\overline{\Bun}_B}$. In Section \ref{mainproof2} we specify the action of certain generators of $\check{\mathfrak{g}} \otimes H^{\bullet}(X)$. In \cite{FFKM} the authors verify the necessary relations for $\check{\mathfrak{g}} \otimes H^0(X)$, which suffices for our application. The derivedness of the Lie algebra $\check{\mathfrak{g}} \otimes H^{\bullet}(X)$, or more generally $H^{\bullet}(X,\check{\mathfrak{g}}_{E_{\check{T}}})$, makes this checking of relations difficult to do ``by hand," and in any case a more conceptual approach to the conjecture is desirable.

\subsection*{Acknowledgments:} I thank my doctoral advisor Dennis Gaitsgory for suggesting that I study this particular nearby cycles sheaf, and for many invaluable conversations full of technical assistance and motivation. The ways in which he has helped and guided me through this project are too numerous to list, but I am especially grateful to him for explaining the statement of Conjecture \ref{dualact} and its relevance to the semisimplicity assertion of Theorem \ref{main}. I thank Sam Raskin for many illuminating discussions about this material, which included helping me to formulate Lemma \ref{nclem} and explaining Definition \ref{ncconstr}. I am grateful to Simon Schieder for explaining to me the definition of the Picard-Lefschetz oscillators for an arbitrary reductive group.

\section{Drinfeld compactifications and Zastava spaces}

\subsection{} Let $2\rho : \mathbb{G}_m \to T$ denote the sum of the simple coroots and fix a square root $\omega_X^{\otimes \frac12}$ of $\omega_X$. We define \[ \rho(\omega_X) := 2\rho(\omega_X^{\otimes \frac12}) \in \Bun_T(k). \]

By definition, $\overline{\Bun}_{N^{\omega}}$ is the fiber product \[
\begin{tikzcd}[cramped]
\overline{\Bun}_{N^{\omega}} \ar[d] \ar[r] & \overline{\Bun}_B \ar[d] \\
\Spec k \ar[r, "\rho(\omega_X)"] & \Bun_T.
\end{tikzcd} \]
The $T$-bundle $\rho(\omega_X)$ and the action of $T$ on $N$ give rise to a group scheme $N^{\omega}$ over $X$, and the open stratum $\Bun_{N^{\omega}} \to \overline{\Bun}_{N^{\omega}}$ is identified with the moduli stack of $N^{\omega}$-bundles, as the notation suggests. By construction, $T$ acts on $\overline{\Bun}_{N^{\omega}}$ in such a way that $\overline{\Bun}_{N^{\omega}} \to \overline{\Bun}_B$ factors through a closed embedding \[ \overline{\Bun}_{N^{\omega}}/T \longrightarrow \overline{\Bun}_B. \]

For each $\mu \in \Lambda^{\pos}$ the corresponding stratum $\overline{\Bun}_{N^{\omega},=\mu}$ fits into a fiber square \[
\begin{tikzcd}[cramped]
\overline{\Bun}_{N^{\omega},=\mu} \ar[d] \ar[r] & \Bun_B \ar[d] \\
X^{\mu} \ar[r] & \Bun_T,
\end{tikzcd} \] where the lower horizontal morphism is the twisted Abel-Jacobi map $D \mapsto \rho(\omega_X)(D)$. We write \[ \overline{\Bun}_{N^{\omega},\leq \mu} := \bigcup_{\mu' \leq \mu} \overline{\Bun}_{N^{\omega},=\mu'} \] and $\mathfrak{j}_{\leq \mu}$ for the corresponding open embedding.

The embedding $\mathfrak{j}_{=\mu}$ of the stratum, which is known to be affine, extends to a finite map \[ \mathfrak{j}_{\geq \mu} : \overline{\Bun}_{N^{\omega},\geq \mu} \longrightarrow \overline{\Bun}_{N^{\omega}}, \] where $\overline{\Bun}_{N^{\omega},\geq \mu} := X^{\mu} \times_{\Bun_T} \overline{\Bun}_B$.

\subsection{} Now we introduce the Zastava spaces, which are factorizable local models for $\overline{\Bun}_{N^{\omega}}$. There are several versions of Zastava space, and notations vary significantly within the literature (ours is similar to \cite{AG}).

Define the Zastava space $Z$ to be the open locus in $\overline{\Bun}_{N^{\omega}} \times_{\Bun_G} \Bun_{B^-}$ where the generalized $N$-reduction and $B^-$-reduction are transverse generically on $X$. It is well-known that $Z$ is a scheme, with connected components \[ Z^{\lambda} := Z \cap (\overline{\Bun}_{N^{\omega}} \times_{\Bun_G} \Bun_{B^-}^{\lambda + \deg \rho(\omega_X)}) \] indexed by $\lambda \in \Lambda^{\pos}$.

The map $\mathfrak{p}^- : \Bun_{B^-} \to \Bun_G$ gives rise to $^{\prime}\mathfrak{p}^- : Z \to \overline{\Bun}_{N^{\omega}}$. It is shown in \cite{BG1} that $\mathfrak{p}^{-,\lambda}$ and therefore $^{\prime}\mathfrak{p}^{-,\lambda}$ are smooth for $\lambda$ sufficiently dominant. Moreover, given a quasicompact open $U \subset \overline{\Bun}_{N^{\omega}}$, for $\lambda$ sufficiently dominant the image of $Z^{\lambda} \to \overline{\Bun}_{N^{\omega}}$ contains $U$.

For each $0 \leq \mu \leq \lambda$ we have the corresponding stratum $Z^{\lambda}_{=\mu} := Z^{\lambda} \times_{\overline{\Bun}_{N^{\omega}}} \overline{\Bun}_{N^{\omega},=\mu}$ with locally closed embedding \[ ^{\prime}\mathfrak{j}^{\lambda}_{=\mu} : Z^{\lambda}_{=\mu} \longrightarrow Z^{\lambda}. \] Define $\mathring{Z}^{\lambda} := Z^{\lambda}_{=0}$. Similarly, we have $Z^{\lambda}_{\leq\mu} := Z^{\lambda} \times_{\overline{\Bun}_{N^{\omega}}} \overline{\Bun}_{N^{\omega},\leq \mu}$ with the open embedding \[ ^{\prime}\mathfrak{j}^{\lambda}_{\leq\mu} : Z^{\lambda}_{\leq\mu} \longrightarrow Z^{\lambda}. \] Put $Z^{\lambda}_{\geq \mu} := Z \times_{\overline{\Bun}_{N^{\omega}}} \overline{\Bun}_{N^{\omega},\geq \mu}$, so that $^{\prime}\mathfrak{j}^{\lambda}_{=\mu}$ extends to the finite map \[ ^{\prime}\mathfrak{j}^{\lambda}_{\geq \mu} : Z^{\lambda}_{\geq \mu} \longrightarrow Z^{\lambda}. \]

\subsection{} Let us recall the factorization structure on $Z$. For every $\lambda \in \Lambda^{\pos}$ there is a canonical map $\pi^{\lambda} : Z^{\lambda} \to X^{\lambda}$, which is well-known to be affine. We write $(X^{\lambda_1} \times X^{\lambda_2})_{\disj} \subset X^{\lambda_1} \times X^{\lambda_2}$ for the open locus where the two divisors are disjoint. Similarly, put \[ (Z^{\lambda_1} \times Z^{\lambda_2})_{\disj} := (Z^{\lambda_1} \times Z^{\lambda_2}) \times_{X^{\lambda_1} \times X^{\lambda_2}} (X^{\lambda_1} \times X^{\lambda_2})_{\disj}. \] The factorization structure is a canonical morphism $(Z^{\lambda_1} \times Z^{\lambda_2})_{\disj} \to Z^{\lambda_1 + \lambda_2}$ which fits into a fiber square \[
\begin{tikzcd}[cramped]
(Z^{\lambda_1} \times Z^{\lambda_2})_{\disj} \ar[d] \ar[r] & Z^{\lambda_1 + \lambda_2} \ar[d] \\
(X^{\lambda_1} \times X^{\lambda_2})_{\disj} \ar[r] & X^{\lambda_1 + \lambda_2}.
\end{tikzcd} \]

The factorization structure is compatible with the defect stratification in the following sense. The factorization structure on the strata consists of, for each decomposition $\mu_1 + \mu_2 = \mu$ satisfying $0 \leq \mu_1 \leq \lambda_1$ and $0 \leq \mu_2 \leq \lambda_2$, a morphism $(Z^{\lambda_1}_{=\mu_1} \times Z^{\lambda_2}_{=\mu_2})_{\disj} \to Z^{\lambda_1 + \lambda_2}_{=\mu}$ which fits into a fiber square \[
\begin{tikzcd}[cramped]
\coprod_{\mu_1 + \mu_2 = \mu} (Z^{\lambda_1}_{=\mu_1} \times Z^{\lambda_2}_{=\mu_2})_{\disj} \ar[d] \ar[r] & Z^{\lambda_1 + \lambda_2}_{=\mu} \ar[d] \\
(X^{\lambda_1} \times X^{\lambda_2})_{\disj} \ar[r] & X^{\lambda_1 + \lambda_2}.
\end{tikzcd} \] One has similar factorization structures on $Z^{\lambda}_{\leq \mu}$ and $Z^{\lambda}_{\geq \mu}$. Moreover, these factorization structures are compatible with $^{\prime}\mathfrak{j}^{\lambda}_{=\mu}$, $^{\prime}\mathfrak{j}^{\lambda}_{\geq \mu}$, etc.

\label{factsec}

\subsection{} We will also need the compactified Zastava space $\overline{Z}$, which is the open locus in $\overline{\Bun}_{N^{\omega}} \times_{\Bun_G} \overline{\Bun}_{B^-}$ where the generalized $N$- and $B^-$-reductions are generically transverse. In particular there is an open embedding $^{\prime}\mathfrak{j}^- : Z \to \overline{Z}$ obtained from $\mathfrak{j}^- : \Bun_{B^-} \to \overline{\Bun}_{B^-}$ by base change.

For any $\nu \in \Lambda^{\pos}$ we put \[ _{=\nu}\overline{Z} := \overline{Z} \times_{\overline{\Bun}_{B^-}} \overline{\Bun}_{B^-,=\nu}, \] and similarly for $_{\leq \nu}\overline{Z}$ and $_{\geq \nu}\overline{Z}$.

The projections $\pi^{\lambda}$ extend to proper morphisms $\overline{\pi}^{\lambda} : \overline{Z}^{\lambda} \to X^{\lambda}$. The factorization structure on $Z$ extends to $\overline{Z}$ in a way compatible with both defect stratifications on $\overline{Z}$.

\section{Nearby cycles and adelic invariance}

\subsection{} We will need a slightly nonstandard construction of nearby cycles for the first proof of Theorem \ref{strata}. Let $Y$ be a scheme of finite type equipped with an action of $\G_m$ and a $\G_m$-equivariant morphism $f : Y \to \A^1$. Write $Y_0$ for the fiber of $Y$ over $0$ and $\mathring{Y}$ for the preimage of $\G_m$. We temporarily denote the embeddings by \[ Y_0 \stackrel{i}{\longrightarrow} Y \stackrel{j}{\longleftarrow} \mathring{Y}. \]

Observe that for any holonomic D-module $\mathscr{F}$ on $\mathring{Y}$, we have an action of $H^{\bullet}(\G_m)$ on $\mathscr{F}$ and hence, by functoriality, on $i^!j_!\mathscr{F}$. The point $1 \in \G_m$ induces an augmentation $H^{\bullet}(\G_m) \to k$, and since $H^{\bullet}(\G_m)$ is generated by a single element in cohomological degree $1$, the endomorphism algebra $\End_{H^{\bullet}(\G_m)}(k)$ is canonically isomorphic to the polynomial ring $k[t]$.

\begin{definition}

We define unipotent nearby cycles with respect to $f$ to be the functor \[ \Psi : D_{\hol}(\mathring{Y}) \longrightarrow D(Y_0) \] given by the formula \[ \Psi(\mathscr{F}) = k \otimes_{H^{\bullet}(\G_m)} i^!j_!\mathscr{F}. \] The action of $\End_{H^{\bullet}(\G_m)}(k) = k[t]$ defines the monodromy endomorphism of $\Psi$.

\label{ncconstr}

\end{definition}

\begin{proposition}

The functor $\Psi$ has the following properties:

\begin{enumerate}[(i)]
\item it coincides with the construction in \cite{B} and in particular preserves holonomicity,
\item if $\mathscr{F}$ is $\G_m$-equivariant then $\Psi(\mathscr{F})$ is unipotently $\G_m$-monodromic, and the monodromy endomorphism is the obstruction to $\G_m$-equivariance.
\end{enumerate}

\label{ncprop}

\end{proposition}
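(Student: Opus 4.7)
The plan is to identify $\Psi(\mathscr{F}) = k \otimes^L_{H^\bullet(\G_m)} i^!j_!\mathscr{F}$ with Beilinson's unipotent nearby cycles functor $\Psi^{un}_B(\mathscr{F})$ via Koszul duality. Recall that Beilinson's construction uses the pro-system of unipotent Jordan block local systems $\mathcal{L}^{(n)}$ on $\G_m$, each a module over $k[t]/t^n$ via its nilpotent monodromy; for holonomic $\mathscr{F}$ the objects $\cofib(j_!(\mathscr{F} \otimes f^*\mathcal{L}^{(n)}) \to j_*(\mathscr{F} \otimes f^*\mathcal{L}^{(n)}))[-1]|_{Y_0}$ stabilize as $n \to \infty$, producing $\Psi^{un}_B(\mathscr{F})$ together with a $k[t]$-action encoding the log-monodromy. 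The central observation is that the pair $(\Psi^{un}_B(\mathscr{F}), k[t])$ is Koszul dual to the pair $(i^!j_!\mathscr{F}, H^\bullet(\G_m))$: the duality is mediated by the algebra identification $\End_{H^\bullet(\G_m)}(k) = k[t]$ noted in the text, and the functor $k \otimes^L_{H^\bullet(\G_m)}(-)$ implements exactly this Koszul duality equivalence. Thus by design $\Psi(\mathscr{F}) \simeq \Psi^{un}_B(\mathscr{F})$, and preservation of holonomicity is inherited.

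To make this identification rigorous one must match two sources of $H^\bullet(\G_m)$-action on $i^!j_!\mathscr{F}$: the geometric one used in the definition (coming from the $\G_m$-action on $\mathring{Y}$) and the one appearing as the Koszul dual of Beilinson's monodromy action on the pro-system $\{\mathcal{L}^{(n)}\}$. Both encode the infinitesimal $\G_m$-action at the identity $1 \in \G_m$, so matching them is a tautological check once the formalism is set up, though it does involve careful tracking of adjunctions and shifts.

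For (ii), a $\G_m$-equivariant structure on $\mathscr{F}$ is transported by functoriality to $i^!j_!\mathscr{F}$, and since $0 \in \A^1$ is $\G_m$-fixed the induced $\G_m$-action on $Y_0$ is trivial; therefore $i^!j_!\mathscr{F}$ is already unipotently $\G_m$-monodromic on $Y_0$, and hence so is $\Psi(\mathscr{F})$ after applying $k \otimes^L_{H^\bullet(\G_m)}(-)$. Unipotence of the monodromy is automatic because equivariance trivialises the $H^\bullet(\G_m)$-action in positive degrees. The identification of the monodromy endomorphism with the obstruction to $\G_m$-equivariance then amounts to an unwinding of definitions: the monodromy is by construction the action of $t \in \End_{H^\bullet(\G_m)}(k) = k[t]$, and under the Koszul duality of part (i) this $t$ corresponds to the degree-$1$ generator $\epsilon \in H^1(\G_m)$, whose action is precisely the obstruction class measuring the failure of $\Psi(\mathscr{F})$ to be $\G_m$-equivariant.

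The hard part will be the precise setup of the Koszul-duality correspondence between $H^\bullet(\G_m)$-modules and $k[t]$-modules in this D-module setting, and in particular the verification that the $H^\bullet(\G_m)$-action produced geometrically from the $\G_m$-action on $\mathring{Y}$ matches, under Koszul duality, the monodromy action on the Beilinson side. Once this compatibility is in hand, both (i) and (ii) follow formally, and the preservation of holonomicity reduces to the corresponding fact in \cite{B}.
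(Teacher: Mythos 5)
Your overall strategy is the same as the paper's: identify $k \otimes_{H^{\bullet}(\G_m)} i^!j_!\mathscr{F}$ with Beilinson's construction via the Jordan-block local systems, then read off (ii). But the step you explicitly defer as ``the hard part'' is exactly the content of the proof, and as written it is a gap. The paper carries it out concretely: resolve the augmentation module $k$ over $H^{\bullet}(\G_m)$ by free modules, so that $\Psi(\mathscr{F})$ is computed by the (colimit of truncations of the) complex $\cdots \to i^!j_!\mathscr{F}[-1] \to i^!j_!\mathscr{F}$ with differentials given by the $H^1(\G_m)$-action, and then identify the length-$a$ truncation with $i^!j_!(\mathscr{F} \stackrel{*}{\otimes} f^*L_a)$ using the observation that the cofiber of $k_{\G_m}[-1] \to k_{\G_m} \to L_a$ (first map the $H^1(\G_m)$-action, second the canonical inclusion) is $L_{a+1}$. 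This single observation is what matches the ``geometric'' $H^{\bullet}(\G_m)$-action with the Jordan-block transition maps; it is short, but it is not tautological, and without it neither (i) nor the stabilization $\Psi(\mathscr{F}) \cong H^0 i^!j_!(\mathscr{F} \stackrel{*}{\otimes} f^*L_a)$ (which gives holonomicity) is established.

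Part (ii) contains two incorrect assertions. First, the $\G_m$-action on $Y_0$ is not trivial — $0 \in \A^1$ being fixed only means $Y_0$ is preserved (in the application $Y_0 = \overline{\Bun}_{N^{\omega}}$ carries the nontrivial $a_{\gamma}$-action). Second, equivariance of $\mathscr{F}$ does not ``trivialise the $H^{\bullet}(\G_m)$-action in positive degrees'' on $i^!j_!\mathscr{F}$: if the positive-degree action were zero, then $k \otimes_{H^{\bullet}(\G_m)} i^!j_!\mathscr{F}$ would be $i^!j_!\mathscr{F} \otimes k[t]$, not nearby cycles; already for $\mathscr{F}$ the shifted constant sheaf on $\G_m \subset \A^1$ one has $i^!j_!\mathscr{F} \simeq H^{\bullet}(\G_m)$ with its regular action, which is nonzero in degree one. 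Relatedly, the identification of the monodromy with the obstruction to $\G_m$-equivariance of $\Psi(\mathscr{F})$ is not an unwinding of definitions: the paper obtains it because, after the identification in (i), the monodromy is induced by the canonical nilpotent endomorphism of $L_a$, which is precisely the obstruction to $\G_m$-equivariance for $L_a$, and one concludes by functoriality of the obstruction class; unipotent monodromicity likewise comes from writing $\Psi(\mathscr{F})$ as $H^0 i^!j_!(\mathscr{F} \stackrel{*}{\otimes} f^*L_a)$ for large $a$. You would need to supply these arguments (or equivalents) for (ii) to go through.
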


\begin{proof}

Let $\mathscr{F}$ be a holonomic D-module on $\mathring{Y}$. We recall Beilinson's construction: for any $a \geq 1$ let $L_a$ be the shifted D-module on $\G_m$ corresponding to the local system whose monodromy is a unipotent Jordan block of rank $a$. There is a canonical map $L_a \to L_{a+1}$. Then Beilinson's definition of nearby cycles is \[ \colim_a i^!j_!(\mathscr{F} \stackrel{*}{\otimes} f^*L_a). \] Moreover, this colimit is isomorphic to $H^0i^!j_!(\mathscr{F} \stackrel{*}{\otimes} f^*L_a)$ for large $a$, which is evidently holonomic.

Observe that we can resolve the augmentation module for $H^{\bullet}(\G_m)$ using the total complex of the double complex \[ \cdots \longrightarrow H^{\bullet}(\G_m)[-2] \longrightarrow H^{\bullet}(\G_m)[-1] \longrightarrow H^{\bullet}(\G_m). \] Tensoring this with $i^!j_!\mathscr{F}$, we obtain a double complex
\begin{equation}
\cdots \longrightarrow i^!j_!\mathscr{F}[-2] \longrightarrow i^!j_!\mathscr{F}[-1] \longrightarrow i^!j_!\mathscr{F}
\label{doubcom}
\end{equation}
whose total complex is quasi-isomorphic to $\Psi(\mathscr{F})$. Note that the transition maps $i^!j_!\mathscr{F}[-n] \to i^!j_!\mathscr{F}[-n+1]$ are given by the action of a generator of $H^1(\G_m)$. Thus the truncated double complex \[ i^!j_!\mathscr{F}[-a+1] \longrightarrow \cdots \longrightarrow i^!j_!\mathscr{F}[-1] \longrightarrow i^!j_!\mathscr{F} \] has total complex quasi-isomorphic to $i^!j_!(\mathscr{F} \stackrel{*}{\otimes} f^*L_a)$, which follows from the observation that the cofiber of the composition \[ k_{\mathbb{G}_m}[-1] \longrightarrow k_{\mathbb{G}_m} \longrightarrow L_a \] is $L_{a+1}$. Here the first map is the action of $H^1(\mathbb{G}_m)$, and the second is the canonical inclusion. Since the total complex of (\ref{doubcom}) is the colimit of its truncations, the assertion (i) follows.

For (ii), note that the $\G_m$-equivariance of $i^!j_!\mathscr{F}$ implies that $\Psi(\mathscr{F}) = H^0i^!j_!(\mathscr{F} \stackrel{*}{\otimes} f^*L_a)$ is $\G_m$-monodromic (here $a$ is large). By construction the monodromy endomorphism is induced by the canonical endomorphism of $L_a$ with one-dimensional kernel and cokernel. But the latter is precisely the obstruction to $\G_m$-equivariance for $L_a$, so the claim follows from the functoriality of this obstruction.

\end{proof}

It follows from part (i) of Proposition \ref{ncprop} that $\Psi$ enjoys the standard properties of the unipotent nearby cycles functor: it is t-exact, commutes with Verdier duality, and commutes with proper direct image and smooth inverse image.

\label{ncsec}

\subsection{} Before proving Theorem \ref{strata}, we will show that $\mathfrak{j}_{=\mu}^!\Psi(\mathscr{W})$ is pulled back from $X^{\mu}$ for any $\mu \in \Lambda^{\pos}$. This property is equivalent to invariance under the ``adelic $N^{\omega}$," as we now explain.

For any $x \in X$, we define the open substack $\overline{\Bun}_{N^{\omega}}^x \subset \overline{\Bun}_{N^{\omega}}$ to consist of those generalized $N^{\omega}$-bundles whose defect is disjoint from $x$. A point of the ind-algebraic stack $\mathscr{H}_{N^{\omega}}^x$ consists of two points of $\overline{\Bun}_{N^{\omega}}^x$ together with an identification over $X \setminus \{ x \}$. Note that $\mathscr{H}_{N^{\omega}}^x$ has the structure of a groupoid acting on $\overline{\Bun}_{N^{\omega}}^x$. The fibers of $\mathscr{H}_{N^{\omega}}^x$ over $\overline{\Bun}_{N^{\omega}}^x \times \overline{\Bun}_{N^{\omega}}^x$ are isomorphic to ind-affine space $\colim_n \A^n$, which implies that the functor which forgets $\mathscr{H}_{N^{\omega}}^x$-equivariance is fully faithful, i.e. $\mathscr{H}_{N^{\omega}}^x$-equivariance is a property.

We say that an object of $D(\overline{\Bun}_{N^{\omega}})$ is $N^{\omega}(\A)$\emph{-equivariant} if, for every $x \in X$, its restriction to $\overline{\Bun}_{N^{\omega}}^x$ is $\mathscr{H}_{N^{\omega}}^x$-equivariant.

\begin{proposition}

An object $\mathscr{F}$ of $D(\overline{\Bun}_{N^{\omega}})$ is $N^{\omega}(\A)$-equivariant if and only if, for every $\mu \in \Lambda^{\pos}$, the canonical morphism \[ \mathfrak{m}_{\mu}^*\mathfrak{m}_{\mu,*}\mathfrak{j}^!_{=\mu}\mathscr{F} \longrightarrow \mathfrak{j}^!_{=\mu}\mathscr{F} \] is an isomorphism.

\label{adelicstrata}

\end{proposition}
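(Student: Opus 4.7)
The plan is to reduce both directions to a stratum-by-stratum analysis, exploiting the fact that each Hecke groupoid $\mathscr{H}_{N^{\omega}}^x$ preserves the defect stratification. Indeed, the two projections $\mathscr{H}_{N^{\omega}}^x \rightrightarrows \overline{\Bun}_{N^{\omega}}^x$ come with an isomorphism of the underlying generalized bundles on $X \setminus \{x\}$, and points of $\overline{\Bun}_{N^{\omega}}^x$ have defect divisor supported in $X \setminus \{x\}$, so the two defect divisors must agree. Consequently $\mathscr{H}_{N^{\omega}}^x$ restricts to a subgroupoid $\mathscr{H}_{N^{\omega},=\mu}^x$ acting on $\overline{\Bun}_{N^{\omega},=\mu}^x := \overline{\Bun}_{N^{\omega},=\mu} \cap \overline{\Bun}_{N^{\omega}}^x$, and this restricted action covers the identity on the open subset $X^{\mu}_x \subset X^{\mu}$ of divisors disjoint from $x$.

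Granting this, the ``if'' direction is almost immediate: if $\mathfrak{j}_{=\mu}^{!}\mathscr{F} \simeq \mathfrak{m}_{\mu}^{*}\mathscr{G}$, then its restriction to each $\overline{\Bun}_{N^{\omega},=\mu}^x$ is pulled back from $X^{\mu}_x$ and hence tautologically $\mathscr{H}_{N^{\omega},=\mu}^x$-equivariant. Because $\mathscr{H}_{N^{\omega}}^x$-equivariance is a property that can be checked stratum-by-stratum whenever the groupoid preserves the stratification---this uses the recollement of $D(\overline{\Bun}_{N^{\omega}}^x)$ together with the fact that, the Hecke fibers being ind-affine, equivariance data is unique when it exists---stratum-level equivariance promotes to $\mathscr{H}_{N^{\omega}}^x$-equivariance of $\mathscr{F}|_{\overline{\Bun}_{N^{\omega}}^x}$. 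Varying $x$ yields $N^{\omega}(\A)$-equivariance.

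For the ``only if'' direction, I would use the fiber square defining the stratum to identify $\mathfrak{m}_{\mu}$, locally over $X^{\mu}$, with a family of twisted $\Bun_{N^{\omega}}$-type stacks; under this identification $\mathscr{H}_{N^{\omega},=\mu}^x$-equivariance restricts fiberwise to the ordinary adelic Hecke-equivariance of a sheaf on such a stack. The standard contractibility of $\Bun_{N^{\omega}}$ for its adelic action---the assertion that any $N^{\omega}(\A)$-equivariant sheaf is pulled back from a point, ultimately resting on $N$ being a successive extension of $\G_a$'s so that the Hecke fibers are ind-affine---then shows that $\mathfrak{j}_{=\mu}^{!}\mathscr{F}|_{\overline{\Bun}_{N^{\omega},=\mu}^x}$ is pulled back from $X^{\mu}_x$. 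Gluing over the open cover $\{X^{\mu}_x\}_{x \in X}$ of $X^{\mu}$ yields that $\mathfrak{j}_{=\mu}^{!}\mathscr{F}$ is pulled back from $X^{\mu}$, which is precisely the condition that the counit in question is an isomorphism.

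The main obstacle is making this descent along $\mathfrak{m}_{\mu}$ from adelic equivariance rigorous, since $\mathfrak{m}_{\mu}$ is a morphism of stacks whose fibers are themselves stacks rather than schemes. A clean way to sidestep this difficulty is to pass to the Zastava local model: using the smoothness of $^{\prime}\mathfrak{p}^{-,\lambda}$ for $\lambda$ sufficiently dominant, recalled in Section \ref{factsec}, the question reduces to the analogous assertion on $Z^{\lambda}_{=\mu}$, whose projection to $X^{\mu}$ factors through a schematic map with ind-affine-space fibers; there smooth base change together with contractibility of affine space yields the required descent directly.
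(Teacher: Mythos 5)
Your overall skeleton is the right one (and it is essentially the argument the paper leaves implicit, since Proposition \ref{adelicstrata} is stated there without proof): the groupoid $\mathscr{H}_{N^{\omega}}^x$ preserves the defect divisor, hence the stratification, and covers the identity on the locus $X^{\mu}_x$ of divisors avoiding $x$; the ``if'' direction then follows because equivariance is a property whose objects are closed under extensions, so it can be checked on the $*$-pushforwards of the $!$-restrictions to the (finitely many, over any quasi-compact invariant open) strata, where pullbacks from $X^{\mu}_x$ are tautologically equivariant; and the ``only if'' direction should come from transitivity of the groupoid along the fibers of $\mathfrak{m}_{\mu}$ together with unipotent (ind-affine) descent, followed by gluing over the cover $\{X^{\mu}_x\}_x$.

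The gap is in how you propose to make the crucial descent step rigorous. The Zastava ``sidestep'' does not work: first, $\mathscr{H}_{N^{\omega}}^x$ does not act on $Z^{\lambda}$, because a modification at $x$ changes the underlying $G$-bundle at $x$ and the generically transverse $B^-$-reduction does not transport across $x$ (at best one gets a correspondence through $\overline{Z}$ with varying degree), so there is no ``analogous assertion on $Z^{\lambda}_{=\mu}$'' to reduce to; second, $Z^{\lambda}_{=\mu}$ is a scheme of finite type, so its projection to $X^{\mu}$ cannot factor through a map with ind-affine-space fibers; third, even if you only use $^{\prime}\mathfrak{p}^{-,\lambda}$ to test the conclusion, the property ``is pulled back from $X^{\mu}$'' does not automatically descend along a smooth surjection (think of an atlas of a classifying stack), and justifying that descent is essentially the contractibility statement you were trying to avoid. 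What is actually needed, and is both simpler and stays on $\overline{\Bun}_{N^{\omega},=\mu}$, is this: using the identification $\overline{\Bun}_{N^{\omega},=\mu} \cong X^{\mu} \times_{\Bun_T} \Bun_B$, two points with the same defect divisor $D$ supported away from $x$ are $\rho(\omega_X)(D)$-twisted $N$-bundles, and any such bundle is trivializable on the affine curve $X \setminus \{x\}$ (as $N$ is a successive extension of $\G_a$'s and $H^1$ of a line bundle on an affine curve vanishes); hence the restricted groupoid maps onto $\overline{\Bun}_{N^{\omega},=\mu}^x \times_{X^{\mu}_x} \overline{\Bun}_{N^{\omega},=\mu}^x$ with ind-affine-space fibers, and the same unipotent-descent principle that makes equivariance a property identifies equivariant sheaves on the stratum with sheaves pulled back from $X^{\mu}_x$. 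Supplying this transitivity-plus-descent argument (in families over $X^{\mu}_x$) is the real content of the ``only if'' direction, and your write-up currently replaces it with a reduction that fails.
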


For each $x \in X$, denote by $\mathscr{O}_x$ the completed local ring of $X$ at $x$, with fraction field $K_x$. If $R$ is a $k$-algebra, we denote by $R \hat{\otimes} \mathscr{O}_x$ and $R \hat{\otimes} K_x$ the respective completed tensor products.

The local Hecke stack $\mathscr{H}^{\loc,x}_{N^{\omega}}$ is defined as follows: a $\Spec R$-point of $\mathscr{H}^{\loc,x}_{N^{\omega}}$ consists of two $N^{\omega}$-bundles over $\Spec(R \hat{\otimes} \mathscr{O}_x)$ equipped with an isomorphism over $\Spec(R \hat{\otimes} K_x)$. There is a natural restriction map $\res^x : \mathscr{H}^x_{N^{\omega}} \to \mathscr{H}^{\loc,x}_{N^{\omega}}$. Moreover, our choice of $\psi$ induces a map $\mathscr{H}^{\loc,x}_{N^{\omega}} \to \G_a$ in the following way. Using $\psi$ we obtain an isomorphism $[N,N] \cong \G_a^{\oplus I}$, so the projection $N \to [N,N]$ induces a morphism \[ \mathscr{H}^{\loc,x}_{N^{\omega}} \longrightarrow \prod_I \mathscr{H}^{\loc,x}_{\G_a^{\omega}}, \] where $\G_a^{\omega} := \G_a \times^{\G_m} \omega_X$. Note that there is a natural isomorphism
\begin{align*}
\mathscr{H}^{\loc,x}_{\G_a^{\omega}} &\tilde{\longrightarrow} \Gamma(\Spec K_x,\omega_X)/\Gamma(\Spec \mathscr{O}_x,\omega_X) \\
&\tilde{\longrightarrow} \Gamma(X \setminus \{ x \},\omega_X)/\Gamma(X,\omega_X),
\end{align*}
which defines a canonical morphism $\rsd^x : \mathscr{H}^{\loc,x}_{\G_a^{\omega}} \to \G_a$. The composition \[ \rsd^x_{\psi} : \mathscr{H}^x_{N^{\omega}} \stackrel{\res^x}{\longrightarrow} \mathscr{H}^{\loc,x}_{N^{\omega}} \longrightarrow \prod_I \mathscr{H}^{\loc,x}_{\G_a^{\omega}} \stackrel{\prod \rsd^x}{\longrightarrow} \prod_I \G_a \stackrel{\add}{\longrightarrow} \G_a \] is an additive character, meaning it is a morphism of groupoids. Thus $\widetilde{\chi}^x := \rsd^{x,!}_{\psi}\chi$ is a character sheaf on $\mathscr{H}_{N^{\omega}}^x$, and we can speak of $(\mathscr{H}_{N^{\omega}}^x,\widetilde{\chi}^x)$-equivariant sheaves on $\overline{\Bun}_{N^{\omega}}^x$, which form a full subcategory of $D(\overline{\Bun}_{N^{\omega}}^x)$. Likewise, if a sheaf on $\overline{\Bun}_{N^{\omega}}$ is $(\mathscr{H}_{N^{\omega}}^x,\widetilde{\chi}^x)$-equivariant for all $x \in X$ we say that it is $(N^{\omega}(\A),\widetilde{\chi})$\emph{-equivariant}. Although we will not use this fact, the category of $(N^{\omega}(\A),\widetilde{\chi})$-equivariant sheaves on $\overline{\Bun}_{N^{\omega}}$ is equivalent to the category of vector spaces, being generated by $\mathscr{W}$. Moreover $(\mathscr{H}_{N^{\omega}}^x,\widetilde{\chi}^x)$-equivariance for a single $x \in X$ implies $(N^{\omega}(\A),\widetilde{\chi})$-equivariance.

Observe that there is a natural $T$-action on $\mathscr{H}^{\loc,x}_{N^{\omega}}$. Using the chosen dominant regular cocharacter $\gamma : \G_m \to T$, the resulting $\G_m$-action contracts $\mathscr{H}^{\loc,x}_{N^{\omega}}$ to a point. In particular, it extends to an action $\A^1 \times \mathscr{H}^{\loc,x}_{N^{\omega}} \to \mathscr{H}^{\loc,x}_{N^{\omega}}$ of the multiplicative monoid $\A^1$. The $!$-pullback of $\chi$ along the composition \[ \A^1 \times \mathscr{H}^x_{N^{\omega}} \stackrel{\id_{\A^1} \times \res^x}{\longrightarrow} \A^1 \times \mathscr{H}^{\loc,x}_{N^{\omega}} \longrightarrow \mathscr{H}^{\loc,x}_{N^{\omega}} \longrightarrow \G_a \] defines an $\A^1$-family $\widetilde{\chi}^x_{\ext}$ of character sheaves on $\mathscr{H}^x_{N^{\omega}}$. Its $!$-restriction to $\{ 1 \} \times \mathscr{H}^x_{N^{\omega}}$ is $\widetilde{\chi}_x$, and it is trivial along $\{ 0 \} \times \mathscr{H}^x_{N^{\omega}}$.

\begin{lemma}

The sheaf $\Psi(\mathscr{W})$ is $N^{\omega}(\A)$-equivariant.

\label{adelicinv}

\end{lemma}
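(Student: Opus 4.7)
The plan is to transport the Whittaker equivariance of $\mathscr{W}_1$ through each step of the construction of $\Psi(\mathscr{W})$, tracking how the character degenerates as the parameter $t$ tends to $0$. Since $\mathscr{W}_1$ is the clean extension of $\ev^{\Delta}\chi$, it is $(\mathscr{H}^x_{N^{\omega}}, \widetilde{\chi}^x)$-equivariant on $\overline{\Bun}^x_{N^{\omega}}$ for every $x \in X$. The first step is to show that $\mathscr{W} = a^{\Delta}_{\gamma}\mathscr{W}_1$ on $\G_m \times \overline{\Bun}^x_{N^{\omega}}$ is equivariant for the Hecke action on the second factor, against the character $\widetilde{\chi}^x_{\ext}|_{\G_m \times \mathscr{H}^x_{N^{\omega}}}$. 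This amounts to checking that $a_{\gamma}$ intertwines the Hecke action on $\overline{\Bun}^x_{N^{\omega}}$ with the twisted Hecke action on $\G_m \times \overline{\Bun}^x_{N^{\omega}}$, in which $t \in \G_m$ conjugates a Hecke modification by $\mathrm{Ad}(\gamma(t))$. Pulling the equivariance of $\mathscr{W}_1$ back via $a_{\gamma}$ then produces equivariance against the pulled-back character, which is precisely the restriction of $\widetilde{\chi}^x_{\ext}$ by the very definition of the latter via the contracting $\A^1$-action on $\mathscr{H}^{\loc,x}_{N^{\omega}}$.

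The second step is to extend this equivariance across $j$ and then restrict along $i$. Because $j : \G_m \times \overline{\Bun}^x_{N^{\omega}} \to \A^1 \times \overline{\Bun}^x_{N^{\omega}}$ is an open immersion and the Hecke action preserves the open locus, base change shows that $j_!$ intertwines the two pullbacks defining equivariance. Combined with the fact that $\widetilde{\chi}^x_{\ext}$ is already defined on all of $\A^1 \times \mathscr{H}^x_{N^{\omega}}$, this upgrades the equivariance to $(\mathscr{H}^x_{N^{\omega}}, \widetilde{\chi}^x_{\ext})$-equivariance of $j_!\mathscr{W}$. Applying $i^!$ and using that $\widetilde{\chi}^x_{\ext}|_{\{0\} \times \mathscr{H}^x_{N^{\omega}}}$ is trivial, we conclude that $i^!j_!\mathscr{W}$ is $\mathscr{H}^x_{N^{\omega}}$-equivariant in the plain, untwisted sense.

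The third step is to pass from $i^!j_!\mathscr{W}$ to $\Psi(\mathscr{W})$. By Definition \ref{ncconstr}, $\Psi(\mathscr{W}) = k \otimes_{H^{\bullet}(\G_m)} i^!j_!\mathscr{W}$, where the $H^{\bullet}(\G_m)$-action is produced by functoriality of $i^!j_!$ applied to the $\G_m$-action on $\mathring{Y}$. Since this action is natural it commutes with the $\mathscr{H}^x_{N^{\omega}}$-equivariance structure, and the derived tensor product over $H^{\bullet}(\G_m)$ — being a colimit of maps of equivariant objects — preserves equivariance. Hence $\Psi(\mathscr{W})$ is $\mathscr{H}^x_{N^{\omega}}$-equivariant, and running this for every $x \in X$ yields the $N^{\omega}(\A)$-equivariance asserted in the lemma.

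I expect the main technical obstacle to lie in Step 1: cleanly formulating the twisted intertwining between $a_{\gamma}$ and the Hecke groupoid action, and identifying the character obtained by pulling back $\widetilde{\chi}^x$ along $\mathrm{Ad}(\gamma(-))$ with the restriction of the family $\widetilde{\chi}^x_{\ext}$ appearing in the excerpt. Once this compatibility is set up, Steps 2 and 3 are formal manipulations with $j_!$, $i^!$, and the bar-construction definition of $\Psi$.
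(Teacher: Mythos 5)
Your proposal is correct and follows essentially the same route as the paper: the paper's proof fixes $x$, observes that ``by construction'' $\mathscr{W}$ is equivariant against $\widetilde{\chi}^x_{\ext}|_{\G_m \times \mathscr{H}^x_{N^{\omega}}}$ (your Step 1), and then uses the triviality of $\widetilde{\chi}^x_{\ext}$ along $\{0\} \times \mathscr{H}^x_{N^{\omega}}$ together with the formula $\Psi(\mathscr{W}) = k \otimes_{H^{\bullet}(\G_m)} i^!j_!\mathscr{W}$ of Definition \ref{ncconstr} (your Steps 2--3, which are indeed formal since equivariance here is a property preserved under $j_!$, $i^!$, and colimits). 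Your write-up simply makes explicit the intertwining and bar-resolution details that the paper leaves implicit.
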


\begin{proof}

Fix $x \in X$; we omit restriction to $\overline{\Bun}_{N^{\omega}}^x$ from the notation in what follows. By construction $\mathscr{W}$ is $\widetilde{\chi}^x_{\ext}|_{\G_m \times \mathscr{H}^x_{N^{\omega}}}$-equivariant. Since \[ \widetilde{\chi}^x_{\ext}|^!_{\{ 0 \} \times \mathscr{H}^x_{N^{\omega}}} = \omega_{\mathscr{H}^x_{N^{\omega}}} \] is the trivial character sheaf, it follows from Proposition \ref{ncconstr} that $\Psi(\mathscr{W})$ is $\mathscr{H}^x_{N^{\omega}}$-equivariant as desired.

\end{proof}

\section{Restriction to the strata}

\label{stratasec}

\subsection{} Now we give the first proof of Theorem \ref{strata} by deducing it from Theorem 1.3.6 in \cite{AG}, which describes the restrictions to the strata of $i!j_!\mathscr{W}$ in terms of the perverse sheaf $\Omega$. Since we work with a fixed dominant regular coweight $\gamma$ rather than the entire torus $T$, it will be necessary to prove a slightly different formulation of the latter theorem.

The inclusion of $N^{\omega}(\A)$-equivariant sheaves on $\overline{\Bun}_{N^{\omega}}$ admits a right adjoint, which we denote by $\Av_*^{N^{\omega}(\A)}$. Let $i$ and $j$ be as in Section \ref{intro2}, and write $p : \A^1 \times \overline{\Bun}_{N^{\omega}} \to \overline{\Bun}_{N^{\omega}}$ for the projection. We will also abusively denote $p \circ j$ by $p$.

\begin{proposition}

There is a canonical isomorphism $i^!j_!\mathscr{W} \tilde{\to} \Av^{N^{\omega}(\A)}_*p_!\mathscr{W}$.

\label{kir}

\end{proposition}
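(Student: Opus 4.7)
The plan is to construct a canonical map $i^!j_!\mathscr{W} \to \Av^{N^\omega(\A)}_* p_!\mathscr{W}$ and show it is an isomorphism, adapting the proof of Theorem 1.3.6 in \cite{AG} to our restricted $\G_m$-setting. Write $\bar{p} : \A^1 \times \overline{\Bun}_{N^\omega} \to \overline{\Bun}_{N^\omega}$ for the projection extending $p$, so that $\bar{p} \circ i = \id$ and $\bar{p} \circ j = p$; in particular $\bar{p}_! i_* = \id$ since $i$ is proper.

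First I would apply $\bar{p}_!$ to the recollement fiber sequence $i_* i^! \to \id \to j_* j^*$ evaluated at $j_! \mathscr{W}$ on $\A^1 \times \overline{\Bun}_{N^\omega}$. Using $\bar{p}_! i_* = \id$ and $j^* j_! = \id$, this yields a fiber sequence
\begin{equation*}
i^! j_! \mathscr{W} \longrightarrow p_! \mathscr{W} \longrightarrow \bar{p}_! j_* \mathscr{W}.
\end{equation*}
The first term is $N^\omega(\A)$-equivariant by the argument of Lemma \ref{adelicinv}: the $\A^1$-family of character sheaves $\widetilde{\chi}^x_{\ext}$ against which $\mathscr{W}$ is equivariant extends under $j_!$ to the whole of $\A^1 \times \mathscr{H}^x_{N^\omega}$ and specializes to the trivial character sheaf over $\{0\} \in \A^1$, so $i^!j_!\mathscr{W}$ inherits genuine $\mathscr{H}^x_{N^\omega}$-equivariance for every $x \in X$. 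By adjunction the first arrow then factors canonically as
\begin{equation*}
i^!j_!\mathscr{W} \longrightarrow \Av^{N^\omega(\A)}_* p_!\mathscr{W} \longrightarrow p_!\mathscr{W},
\end{equation*}
and the claim reduces to showing this factored map is an isomorphism, equivalently that $\Av^{N^\omega(\A)}_* \bar{p}_! j_* \mathscr{W} = 0$.

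This vanishing is the main obstacle. The idea is that $j_*\mathscr{W}$ retains a Whittaker-type equivariance against the nontrivial character sheaf $\widetilde{\chi}^x_{\ext}|_{\G_m \times \mathscr{H}^x_{N^\omega}}$; after applying $\bar{p}_!$, averaging against the trivial character should annihilate the $\G_m$-contribution by Whittaker orthogonality, while the potential contribution arising from the closed stratum $\{0\} \times \overline{\Bun}_{N^\omega}$ is controlled by the contracting $\G_m$-action of $\gamma$ on the local Hecke stacks $\mathscr{H}^{\loc,x}_{N^\omega}$ introduced in Section \ref{ncsec}. The corresponding argument in \cite{AG} is phrased in terms of the full torus $T$, so the technical work is to verify that the regularity and dominance of $\gamma$ suffice: these conditions guarantee that $\gamma(\G_m)$ acts with strictly positive weights on every simple root subspace appearing in the decomposition of $\widetilde{\chi}^x_{\ext}$, which is precisely what permits the cancellations in \emph{loc. cit.} to be reproduced using only this one-parameter subgroup.
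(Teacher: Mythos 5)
Your formal skeleton is correct as far as it goes: the recollement triangle does give the fiber sequence $i^!j_!\mathscr{W} \to p_!\mathscr{W} \to \bar{p}_!j_*\mathscr{W}$, the argument of Lemma \ref{adelicinv} does show that $i^!j_!\mathscr{W}$ is genuinely $N^{\omega}(\A)$-equivariant (the family $\widetilde{\chi}^x_{\ext}$ is trivial over $0 \in \A^1$), and since $\Av^{N^{\omega}(\A)}_*$ is right adjoint to a fully faithful inclusion it is exact and fixes $i^!j_!\mathscr{W}$, so the proposition is indeed equivalent to the vanishing $\Av^{N^{\omega}(\A)}_*\bar{p}_!j_*\mathscr{W} = 0$. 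The gap is that this vanishing is not an auxiliary step: it carries the entire content of the proposition, and your sketch does not prove it. ``Whittaker orthogonality'' applies fiberwise over $t \in \G_m$, where $\mathscr{W}_t$ is equivariant against a nontrivial character sheaf on $\mathscr{H}^x_{N^{\omega}}$; but $\Av^{N^{\omega}(\A)}_*$ does not commute for free with the pushforward $\bar{p}_!$ along the non-proper map $\A^1 \times \overline{\Bun}_{N^{\omega}} \to \overline{\Bun}_{N^{\omega}}$, and over $t=0$ the character degenerates to the trivial one, so orthogonality says nothing exactly where the dangerous contribution lives. Controlling that interchange and the $t \to 0$ degeneration is precisely the contraction-type argument carried out in Section 10.3 of \cite{AG} (this is where the contracting $\G_m$-action on $\mathscr{H}^{\loc,x}_{N^{\omega}}$ from the dominant regular $\gamma$ actually gets used), and your proposal only gestures at it with ``should annihilate'' and ``is controlled by.'' As written, the argument repackages the statement rather than proving it.

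For comparison, the paper does not reprove the statement at all: its proof consists of the observation that the argument of Section 10.3 of \cite{AG}, written there for the action of the full torus $T$, applies \emph{mutatis mutandis} to the $\G_m$-action determined by $\gamma$. Your closing remark that one must check that regularity and dominance of $\gamma$ suffice is the right point to isolate --- that is the only thing needing verification against \emph{loc.\ cit.} --- but to have a proof you would either have to import the [AG] argument wholesale (as the paper does) or actually supply the vanishing argument, e.g.\ by a Braden/contraction-principle analysis of $\bar{p}_!j_*\mathscr{W}$ near $\{0\} \times \overline{\Bun}_{N^{\omega}}$, which is missing here.
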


\begin{proof}

See Section 10.3 of \cite{AG}, where the claim is proved for the action of the entire torus $T$. The same proof applies \emph{mutatis mutandi} to our claim, which involves only the $\G_m$-action.

\end{proof}

Recall that by Proposition \ref{ncconstr}, we have an isomorphism \[ \Psi(\mathscr{W}) \tilde{\longrightarrow} k \otimes_{H^{\bullet}(\G_m)} i^!j_!\mathscr{W}. \]

\begin{proof}[First proof of Theorem \ref{strata}]

Applying Proposition \ref{ncconstr} and the fact that $\mathfrak{j}_{=\mu}^!$ preserves (homotopy) colimits, we see that \[ \mathfrak{j}_{=\mu}^!\Psi(\mathscr{W}) \tilde{\longrightarrow} k \otimes_{H^{\bullet}(\G_m)} \mathfrak{j}_{=\mu}^!i^!j_!\mathscr{W}. \] By Proposition \ref{kir} we have \[ \mathfrak{j}_{=\mu}^!i^!j_!\mathscr{W} \tilde{\longrightarrow} \mathfrak{j}^!_{=\mu}\Av^{N^{\omega}(\A)}_*p_!\mathscr{W}, \] and Proposition \ref{adelicstrata} implies that \[ \mathfrak{j}^!_{=\mu}\Av^{N^{\omega}(\A)}_*p_!\mathscr{W} \tilde{\longrightarrow} \mathfrak{m}_{\mu}^*\mathfrak{m}_{\mu,*}\mathfrak{j}^!_{=\mu}p_!\mathscr{W}. \]

Now Theorem 1.3.6 of \cite{AG} yields
\begin{equation}
\mathfrak{m}_{\mu}^*\mathfrak{m}_{\mu,*}\mathfrak{j}^!_{=\mu}p_!\mathscr{W} \tilde{\longrightarrow} \mathfrak{m}_{\mu}^{\Delta}\Omega^{\mu} \otimes H^{\bullet}_c(\G_m)[1].
\label{braden}
\end{equation}
It remains to show that under the composed isomorphism \[ \mathfrak{j}_{=\mu}^!i^!j_!\mathscr{W} \tilde{\longrightarrow} \mathfrak{m}_{\mu}^{\Delta}\Omega^{\mu} \otimes H^{\bullet}_c(\G_m)[1], \] the action of $H^{\bullet}(\G_m)$ on the left hand side corresponds to the natural action on $H^{\bullet}_c(\G_m)$ on the right hand side. Since $H^{\bullet}_c(\G_m) = H^{\bullet}(\G_m)[-1]$ as $H^{\bullet}(\G_m)$-modules, this will finish the proof.

It is clear that \[ \mathfrak{j}_{=\mu}^!i^!j_!\mathscr{W} \tilde{\longrightarrow} \mathfrak{m}_{\mu}^*\mathfrak{m}_{\mu,*}\mathfrak{j}^!_{=\mu}p_!\mathscr{W} \] intertwines the actions of $H^{\bullet}(\G_m)$, since it is obtained by evaluating a morphism of functors on $\mathscr{W}$. Tracing through the proof of Theorem 1.3.6 in \cite{AG}, we see that the isomorphism (\ref{braden}) is also obtained by evaluating a morphism of functors on $\mathscr{W}$, with the appearance of $H^{\bullet}_c(\G_m)$ accounted for by the isomorphism \[ a_{\gamma,!}\mathscr{W} \tilde{\longrightarrow} \mathscr{W}_1 \otimes H_{\bullet}(\G_m)[-1] \tilde{\longrightarrow} \mathscr{W}_1 \otimes H_c^{\bullet}(\G_m)[1]. \] The latter isomorphism intertwines the actions of $H^{\bullet}(\G_m)$ as needed.

\end{proof}

\subsection{} The rest of this subsection is devoted to the second proof of Theorem \ref{strata}. This proof applies Theorem 4.6.1 in \cite{R}, which says that $\Omega$ can be realized as the twisted cohomology of Zastava space. Accordingly, we must formulate the analogue of Theorem \ref{strata} on Zastava space. First, the Whittaker sheaf: we claim that \[ \mathscr{W}_{Z^{\lambda}} := (\id_{\G_m} \times \ \! ^{\prime}\mathfrak{p}^{-,\lambda})^!\mathscr{W}[\dim \overline{\Bun}_{N^{\omega}} - \dim Z^{\lambda}] \] is perverse for any $\lambda$. For $\lambda$ sufficiently dominant $\mathfrak{p}^{-,\lambda}$ is smooth, so that $\mathscr{W}_{Z^{\lambda}}$ is the cohomologically normalized inverse image of the perverse sheaf $\mathscr{W}$. If $\lambda' \leq \lambda$ then we can pull back $\mathscr{W}_{Z^{\lambda}}$ along $\id_{\G_m}$ times the factorization morphism \[ (Z^{\lambda'} \times Z^{\lambda-\lambda'})_{\disj} \longrightarrow Z^{\lambda}, \] and it is not hard to see that we obtain the restriction of $\mathscr{W}_{Z^{\lambda'}} \boxtimes \mathscr{W}_{Z^{\lambda-\lambda'}}$. Since the factorization map is \'{e}tale, this implies that $\mathscr{W}_{Z^{\lambda'}}$ is perverse as desired.

Since the map $\overline{\Bun}_{N^{\omega}} \to \Bun_G$ is $T$-equivariant for the trivial action of $T$ on $\Bun_G$, we obtain an action of $T$ on $Z$ which makes $^{\prime}\mathfrak{p}^-$ a $T$-equivariant map. As with $\mathscr{W}$, we also denote by $\mathscr{W}_{Z^{\lambda}}$ the corresponding perverse sheaves on $Z^{\lambda}/Z_G$ and $\mathring{\mathfrak{ch}} \times Z^{\lambda}$. In particular we have the perverse sheaf $\Psi(\mathscr{W}_{Z^{\lambda}})$ on $Z^{\lambda}$.

Recall that in the introduction we chose a dominant regular cocharacter $\gamma : \mathbb{G}_m \to T$. Denote by $^{\prime}a_{\gamma} : \mathbb{G}_m \times Z \to Z$ the resulting action morphism. Then $\Psi( \, \! ^{\prime}a_{\gamma}^{\Delta}\mathscr{W}_{Z^{\lambda}})$ is canonically isomorphic to the previously constructed perverse sheaf $\Psi(\mathscr{W}_{Z^{\lambda}})$, where in the former expression nearby cycles is taken with respect to the projection $\A^1 \times Z^{\lambda} \to \A^1$. In particular, the choice of $\gamma$ gives rise to a distinguished monodromy endomorphism for $\Psi(\mathscr{W}_{Z^{\lambda}})$.

\begin{theorem}

For any $\mu \in \Lambda^{\pos}$ there is an isomorphism \[ ^{\prime}\mathfrak{j}^!_{=\mu}\Psi( \mathscr{W}_Z) \tilde{\longrightarrow} ^{\prime}\mathfrak{m}^{\Delta}_{\mu}\Omega^{\mu}. \]

\label{zstrata}

\end{theorem}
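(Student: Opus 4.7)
I would deduce Theorem \ref{zstrata} from Theorem \ref{strata} by combining smooth base change with the factorization structure on Zastava space. The key observation is that $\mathscr{W}_{Z^{\lambda}}$ is by construction the cohomologically normalized pullback of $\mathscr{W}$ along $\id_{\G_m} \times {}^{\prime}\mathfrak{p}^{-,\lambda}$, and unipotent nearby cycles commutes with smooth inverse image.

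Fix $\mu \in \Lambda^{\pos}$ and $\lambda \geq \mu$ with $\lambda$ sufficiently dominant so that ${}^{\prime}\mathfrak{p}^{-,\lambda} \colon Z^{\lambda} \to \overline{\Bun}_{N^{\omega}}$ is smooth. Then the commutation of $\Psi$ with smooth inverse image (noted immediately after Proposition \ref{ncprop}) gives $\Psi(\mathscr{W}_{Z^{\lambda}}) \cong ({}^{\prime}\mathfrak{p}^{-,\lambda})^{\Delta}\Psi(\mathscr{W})$. Smooth base change applied to the Cartesian square defining $Z^{\lambda}_{=\mu}$ produces
\[ {}^{\prime}\mathfrak{j}^{\lambda,!}_{=\mu}\Psi(\mathscr{W}_{Z^{\lambda}}) \cong ({}^{\prime}\mathfrak{p}^{-,\lambda}_{=\mu})^{\Delta}\mathfrak{j}^!_{=\mu}\Psi(\mathscr{W}), \]
and substituting Theorem \ref{strata} on the right yields $({}^{\prime}\mathfrak{p}^{-,\lambda}_{=\mu})^{\Delta}\mathfrak{m}_{\mu}^{\Delta}\Omega^{\mu} = ({}^{\prime}\mathfrak{m}^{\lambda}_{\mu})^{\Delta}\Omega^{\mu}$, where ${}^{\prime}\mathfrak{m}^{\lambda}_{\mu} \colon Z^{\lambda}_{=\mu} \to X^{\mu}$ is the natural composition $\mathfrak{m}_{\mu} \circ {}^{\prime}\mathfrak{p}^{-,\lambda}_{=\mu}$.

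For those $\lambda$ where ${}^{\prime}\mathfrak{p}^{-,\lambda}$ is not yet smooth, I would pass to a large enough $\lambda + \lambda'$ and descend via factorization. The factorization morphism $(Z^{\lambda} \times Z^{\lambda'})_{\disj} \to Z^{\lambda+\lambda'}$ is étale, its pullback of $\mathscr{W}_{Z^{\lambda+\lambda'}}$ is the restriction of $\mathscr{W}_{Z^{\lambda}} \boxtimes \mathscr{W}_{Z^{\lambda'}}$ (as recorded above Theorem \ref{zstrata}), the defect strata match up via $(Z^{\lambda}_{=\mu} \times \mathring{Z}^{\lambda'})_{\disj} \hookrightarrow Z^{\lambda+\lambda'}_{=\mu}$, and $\Omega^{\bullet}$ is factorizable by construction in \cite{BG2}. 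Pulling back the isomorphism already established on $Z^{\lambda+\lambda'}_{=\mu}$ and invoking factorizability of both sides produces the desired isomorphism on $Z^{\lambda}_{=\mu}$.

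The main obstacle will be verifying that unipotent nearby cycles respects the external product under the diagonal $\G_m$-action, i.e.\ $\Psi(\mathscr{W}_{Z^{\lambda}} \boxtimes \mathscr{W}_{Z^{\lambda'}}) \cong \Psi(\mathscr{W}_{Z^{\lambda}}) \boxtimes \Psi(\mathscr{W}_{Z^{\lambda'}})$ for the diagonal action on $\A^1 \times Z^{\lambda} \times Z^{\lambda'}$. This can be extracted from Beilinson's description used in the proof of Proposition \ref{ncprop}: pulling back the local system $L_a$ along addition $\A^1 \times \A^1 \to \A^1$ is an iterated extension of external products of $L_a$'s, which after unipotent truncation gives the required factorization. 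Once this compatibility is in hand the descent step is mechanical.
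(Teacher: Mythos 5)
Your argument is correct, but it reaches Theorem \ref{zstrata} by a different logical route than the paper does. You take Theorem \ref{strata} as a black box (legitimate, since it has an independent proof via Theorem 1.3.6 of \cite{AG} together with Propositions \ref{kir} and \ref{adelicstrata} and Lemma \ref{adelicinv}) and transport it to Zastava space: for $\lambda$ sufficiently dominant by smoothness of ${}^{\prime}\mathfrak{p}^{-,\lambda}$ and the commutation of $\Psi$ with smooth pullback, and for general $\lambda$ by factorization, descending along the smooth surjection with connected fibers $(Z^{\lambda}_{=\mu} \times \mathring{Z}^{\lambda'})_{\disj} \to Z^{\lambda}_{=\mu}$ (the descent is automatic by full faithfulness of cohomologically normalized pullback on perverse sheaves, a point worth making explicit since perversity of ${}^{\prime}\mathfrak{j}^{\lambda,!}_{=\mu}\Psi(\mathscr{W}_{Z^{\lambda}})$ is only known after pullback). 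These two steps in fact coincide with the closing paragraphs of the paper's own argument; the difference is that the paper proves Theorems \ref{strata} and \ref{zstrata} simultaneously, with the engine being the deepest strata of Zastava space, where ${}^{\prime}\mathfrak{j}^{\mu,!}_{=\mu}\Psi(\mathscr{W}_Z)$ is computed via the contraction principle, Theorem 4.6.1 of \cite{R} identifying $\Omega$ with twisted cohomology of Zastava space, and the ULA Lemma \ref{nclem}; Theorem \ref{strata} on each stratum is then deduced from this together with adelic invariance. The payoff of that longer route is a proof of Theorem \ref{strata} independent of \cite{AG}; your deduction is shorter but forfeits that independence, and it could not be used inside the paper's second proof of Theorem \ref{strata} without circularity. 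Two smaller remarks: the step you call smooth base change is just compatibility of $!$-pullback around the Cartesian strata square, and the K\"{u}nneth compatibility you flag as the main obstacle is precisely what the paper asserts when it says $\Psi(\mathscr{W}_Z)$ factorizes (total and unipotent nearby cycles agree here because $\mathscr{W}_Z$ is $\G_m$-equivariant); your sketch via Beilinson's $L_a$ is the right mechanism, though the relevant comparison is for exterior products of the two families over a single copy of the base, i.e. multiplication on $\G_m$, not addition on $\A^1$.
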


We will need to use the factorization structure on $Z$ in the following way. First, observe that $\mathscr{W}_Z$ admits a natural factorization structure. Thus $\Psi( \mathscr{W}_Z)$ admits a factorization structure by the K\"{u}nneth formula for nearby cycles. Although the K\"{u}nneth formula holds for the total nearby cycles functor, in this case the total nearby cycles equals the unipotent nearby cycles because $\mathscr{W}_Z$ is $\G_m$-equivariant.

\subsection{} In the second proof, Theorems \ref{strata} and \ref{zstrata} will be proved simultaneously by an inductive argument. The argument uses the following key lemma.

Let $f : \mathscr{X} \to \mathscr{Y}$ be a morphism of Artin stacks with $\mathscr{Y}$ smooth, and suppose we are given a function $\mathscr{Y} \to \A^1$. Let $g : S \to \mathscr{Y}$ be a morphism where $S$ is an affine scheme and consider the cartesian square \[
\begin{tikzcd}[cramped]
\mathscr{X} \times_{\mathscr{Y}} S \ar[d, "^{\prime \!}f"] \ar[r, "^{\prime \!}g"] & \mathscr{X}\ar[d, "f"] \\
S \ar[r, "g"] & \mathscr{Y}.
\end{tikzcd} \]
Write $i : S_0 \to S$ for the inclusion of the vanishing locus of the function $S \to \mathscr{Y} \to \A^1$.

\begin{lemma}

For any $\mathscr{F} \in D(\mathscr{X})$ which is ULA over $\mathscr{Y}$ and any $\mathscr{G} \in D(S)$, there is a canonical isomorphism \[ \Psi(^{\prime \!}g^!(\mathscr{F}) \overset{!}{\otimes} \, \! ^{\prime \!}f^!(\mathscr{G})) \, \tilde{\longrightarrow} \, ^{\prime \!}g^!(\mathscr{F}) \overset{!}{\otimes} \, \! ^{\prime \!}f^!(i^!\Psi(\mathscr{G})). \]

\label{nclem}

\end{lemma}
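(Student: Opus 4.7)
The plan is to reduce the lemma to a general compatibility between nearby cycles and $!$-tensor product with a ULA sheaf. First I would observe that the ULA property is preserved under $!$-base change: since $\mathscr{F}$ is ULA over the smooth stack $\mathscr{Y}$ along $f$, the sheaf $A := {}^{\prime}g^!(\mathscr{F})$ is ULA over $S$ along ${}^{\prime}f$. Smoothness of $\mathscr{Y}$ is used to ensure that $!$- and $*$-pullbacks differ only by a shift, so the standard notions of ULA coincide.

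The main step is the following general compatibility. Let $\pi : T \to S$ be any morphism of Artin stacks, $\sigma : S \to \A^1$ a function with vanishing locus $i : S_0 \hookrightarrow S$, and write $i_T : T_0 \hookrightarrow T$ and $\pi_0 : T_0 \to S_0$ for the base changes. For any $A \in D(T)$ which is ULA over $S$ and any $\mathscr{G} \in D(S)$, there is a canonical isomorphism
\[ \Psi_{\sigma\pi}\bigl(A \overset{!}{\otimes} \pi^!\mathscr{G}\bigr) \,\tilde{\longrightarrow}\, i_T^!(A) \overset{!}{\otimes} \pi_0^!\,\Psi_\sigma(\mathscr{G}). \]
To prove this I would invoke the construction of $\Psi$ from Definition \ref{ncconstr} as $k \otimes_{H^{\bullet}(\G_m)} i^! j_!(-)$. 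The ULA property of $A$ says precisely that the functor $A \overset{!}{\otimes} \pi^!(-) : D(S) \to D(T)$ behaves like a base-change functor; in particular it commutes with $i^! j_!$, and it also passes through the derived $H^{\bullet}(\G_m)$-coinvariants, which are computed by a bar complex whose formation commutes with $!$-tensor by $A$. Alternatively one can use Beilinson's colimit presentation $\Psi = \colim_a i^! j_!((-) \otimes^* \sigma^* L_a)$ and check the identification termwise.

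Specializing to $A = {}^{\prime}g^!(\mathscr{F})$, $\pi = {}^{\prime}f$, and the given $\mathscr{G}$ then yields the lemma. The $i^!$ appearing on the right-hand side of the statement simply records that $\Psi_\sigma(\mathscr{G})$ is already supported on $S_0$, so that $^{\prime}f^!$ applied to its pushforward computes $\pi_0^! \Psi_\sigma(\mathscr{G})$ on $T_0$ by base change. The main obstacle is the general compatibility above: morally a ULA sheaf contributes no vanishing cycles in the fiber direction of $\pi$, so $\Psi$ should only see the $S$-direction of the $!$-tensor, but turning this slogan into a clean commutation with the $k \otimes_{H^{\bullet}(\G_m)} i^! j_!$ construction requires careful bookkeeping of how ULA sheaves interact with base-change functors and with derived coinvariants.
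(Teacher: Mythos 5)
The paper states Lemma \ref{nclem} without proof (it is only credited, in the acknowledgments, to discussions with Raskin), so there is no argument in the text to compare yours against; your reduction --- base-change the ULA condition from $\mathscr{Y}$ to $S$, then prove that nearby cycles along a function pulled back from $S$ pass through a sheaf that is ULA over $S$ --- is certainly the intended shape of the statement. The gap is that the central step is asserted rather than proved: saying that ULA-ness of $A$ makes $A \overset{!}{\otimes} \pi^!(-)$ ``behave like a base-change functor'' and hence ``commute with $i^!j_!$'' is not an available black box; it \emph{is} the lemma, and you never say which property of ULA objects you would actually invoke. To make it a proof, separate the two commutations. Commutation with $i^!$ is formal and needs no hypothesis: $!$-pullback commutes with $\overset{!}{\otimes}$ and with $\pi^!$ (everything is built from $!$-pullbacks along products and diagonals), so $i_T^!(A \overset{!}{\otimes} \pi^! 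H) \cong i_T^!A \overset{!}{\otimes} \pi_0^! i^! H$ always. The entire content is commutation with $j_!$, i.e.\ the vanishing of the $*$-restriction of $A \overset{!}{\otimes} \pi^! j_!(\mathscr{G}|_{\mathring{S}})$ to the zero fiber, and this is exactly where ULA must be used --- for instance via the characterization (stable under base change) that for $A$ ULA over $S$ the canonical map from the $*$-version $A \overset{*}{\otimes} \pi^*(-)$, twisted by an object pulled back from $S$, to $A \overset{!}{\otimes} \pi^!(-)$ is an isomorphism, since the $*$-version commutes with $j_!$ by base change and the projection formula; alternatively one can run the vanishing-cycles definition of ULA as in Section 5 of \cite{BG1}. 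Once that is in place, compatibility with the $H^{\bullet}(\G_m)$-action is automatic (the function on $T$ is pulled back from $S$, so the action is induced through the $S$-variable), and passing through the coinvariants of Definition \ref{ncconstr} only requires commuting $A \overset{!}{\otimes} \pi^!(-)$ with the relevant colimit, which is fine in the holonomic setting or termwise in Beilinson's $L_a$-presentation, as you indicate.

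A secondary inaccuracy: your justification that $A = {}^{\prime}g^!\mathscr{F}$ is ULA over $S$ is off, because smoothness of $\mathscr{Y}$ does not make $g$ or ${}^{\prime}g$ smooth, so ${}^{\prime}g^!$ and ${}^{\prime}g^*$ need not agree up to shift. What is true, and what you should say, is that ULA is preserved under base change for the $*$-pullback ${}^{\prime}g^*\mathscr{F}$, and that ULA-ness of $\mathscr{F}$ over $\mathscr{Y}$ identifies ${}^{\prime}g^!\mathscr{F}$ with ${}^{\prime}g^*\mathscr{F}$ tensored by a complex pulled back from $S$; such twists by pullbacks from the base preserve both the ULA property and the validity of the desired isomorphism, so one may equally well prove the $*$-version and deduce the $!$-version stated in the lemma.
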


Fix $\nu \in \Lambda^{\pos}$. We will apply the lemma in the case $\mathscr{X} = \A^1 \times \overline{\Bun}_{B^-,\leq \nu}^{\lambda}$, $\mathscr{F} = \IC_{\A^1} \boxtimes \mathfrak{j}^-_!(\IC_{\Bun_{B^-}^{\lambda}})|_{\overline{\Bun}_{B^-,\leq \nu}^{\lambda}}$, and $\mathscr{Y} = \A^1 \times \Bun_G$. Let us check that the ULA property holds when $\lambda$ is sufficiently dominant relative to $\nu$. According to Corollary 4.5 in \cite{BG2}, we have the following decomposition in the Grothendieck group: \[ [\mathfrak{j}^-_!(\IC_{\Bun_{B^-}})|_{\overline{\Bun}_{B^-,\leq \nu}^{\lambda}}] = \sum_{\eta \leq \nu} [\mathfrak{j}^-_{\geq \eta,!}(\Omega^{\eta} \boxtimes \IC_{\overline{\Bun}_{B^-,\leq \nu - \eta}^{\lambda - \eta}})]. \] Observe that the diagram \[
\begin{tikzcd}[cramped]
X^{\eta} \times \overline{\Bun}_{B^-}^{\lambda-\eta} \ar[d] \ar[r, "\mathfrak{j}^-_{\geq \eta}"] & \overline{\Bun}_{B^-}^{\lambda} \ar[d] \\
\overline{\Bun}_B^{\lambda-\eta} \ar[r] & \Bun_G,
\end{tikzcd} \]
commutes, where the left vertical arrow is projection onto the second factor. Since $\mathfrak{j}^-_{\geq \eta}$ is proper it suffices to prove that, for $\lambda$ sufficiently dominant, $\IC_{\overline{\Bun}_{B^-,\leq \nu - \eta}^{\lambda-\eta}}$ is ULA over $\Bun_G$ for all $\eta \leq \nu$. This follows from Corollary 4.1.1.1 in \cite{C2}.

\begin{proof}[Second proof of Theorems \ref{strata} and \ref{zstrata}]

Observe that Theorem \ref{strata} is trivial on the open stratum, since \[ \Psi(\mathscr{W})|_{\Bun_{N^{\omega}}} = \Psi(\mathscr{W}|_{\mathbb{G}_m \times \Bun_{N^{\omega}}}) = \Psi(\IC_{\mathbb{G}_m} \boxtimes \IC_{\Bun_{N^{\omega}}}) = \IC_{\Bun_{N^{\omega}}}, \] and similarly for Theorem \ref{zstrata} on $\mathring{Z}$.

We begin by proving Theorem \ref{zstrata} for the deepest strata, i.e. the closed embeddings \[ ^{\prime}\mathfrak{j}_{=\mu}^{\mu} : X^{\mu} \longrightarrow Z^{\mu}. \] Recall that $\Psi( \mathscr{W}_Z)$ is $\mathbb{G}_m$-monodromic by construction, so the contraction principle says that \[ ^{\prime}\mathfrak{j}_{=\mu}^{\mu,!}\Psi( \mathscr{W}_Z) = \pi^{\mu}_!\Psi( \mathscr{W}_Z). \]

Write $\mathring{\pi}^{\mu} := \pi^{\mu} \circ \, \! ^{\prime}\mathfrak{j}^{\mu}_{=0}$. Theorem 4.6.1 in \cite{R} implies that there is an isomorphism \[ (\id_{\mathbb{G}_m} \times \mathring{\pi})_!( \mathscr{W}_Z|_{\mathbb{G}_m \times \mathring{Z}}) \tilde{\longrightarrow} \IC_{\mathbb{G}_m} \boxtimes \Omega, \] compatible with the factorization structures. Since $\mathscr{W}_Z$ is $!$-extended from $\mathbb{G}_m \times \mathring{Z}$, we obtain  \[ (\id_{\mathbb{G}_m} \times \pi)_! \! \, \mathscr{W}_Z \tilde{\longrightarrow} \IC_{\mathbb{G}_m} \boxtimes \Omega. \]

Since $\pi = \overline{\pi} \circ \! \, ^{\prime}\mathfrak{j}^-$ and $\Psi$ commutes with proper pushforwards, we have \[ \overline{\pi}_!\Psi((\id_{\mathbb{G}_m} \times \! \,^{\prime}\mathfrak{j}^-)_!\! \, \mathscr{W}_{Z}) \tilde{\longrightarrow} \Omega. \] Therefore it suffices to prove that the canonical morphism \[ ^{\prime}\mathfrak{j}^-_!\Psi( \! \, \mathscr{W}_Z) \longrightarrow \Psi((\id_{\mathbb{G}_m} \times \! \, ^{\prime}\mathfrak{j}^-)_! \! \, \mathscr{W}_Z) \] is an isomorphism. Since $\Psi$ commutes with Verdier duality we can replace the $!$-pushforwards with $*$-pushforwards.

Fix $S \to \overline{\Bun}_{N^{\omega}}$ with $S$ an affine scheme and apply Lemma \ref{nclem} with $f = \id_{\A^1} \times \overline{\mathfrak{p}}^-$, \[ g : \A^1 \times S \longrightarrow \A^1 \times \overline{\Bun}_{N^{\omega}} \longrightarrow \A^1 \times \Bun_G, \] $\mathscr{F} = \IC_{\A^1} \boxtimes \mathfrak{j}^-_*(\omega_{\Bun_{B^-}})|_{\overline{\Bun}_{B^-,\leq \nu}^{\lambda}}$, and $\mathscr{G} = \mathscr{W}|^!_{\A^1 \times S}$. Then the lemma yields an isomorphism \[ \Psi((\id_{\mathbb{G}_m} \times \! \, ^{\prime}\mathfrak{j}^-)_* \! \, \mathscr{W}_Z)|_{_{\leq \nu}\overline{Z}^{\lambda}} \tilde{\longrightarrow} \ ^{\prime}\mathfrak{j}^-_*\Psi( \! \, \mathscr{W}_Z)|_{_{\leq \nu}\overline{Z}^{\lambda}} \] for $\lambda$ sufficiently dominant. Changing $\lambda$ if necessary so that $\lambda \geq \mu$, we can restrict this isomorphism along the map \[ (_{\leq \nu}\overline{Z}^{\mu} \times \mathring{Z}^{\lambda - \mu})_{\disj} \longrightarrow \ \! _{\leq \nu}\overline{Z}^{\lambda}. \] By factorizability we obtain the desired isomorphism on $_{\leq \nu}\overline{Z}^{\mu}$. Since $\nu$ was arbitrary, Theorem \ref{zstrata} is proved for the deepest strata.

Now we prove Theorem \ref{strata}. Fix $\mu \in \Lambda^{\pos}$ and choose $\lambda \geq \mu$ dominant enough that $Z^{\lambda}_{=\mu}$ surjects smoothly onto $X^{\mu} \times_{\Bun_T} \Bun_B$. Note that $(X^{\mu} \times \mathring{Z}^{\lambda - \mu})_{\disj}$ is one of the connected components of the fiber product \[ (X^{\mu} \times X^{\lambda - \mu})_{\disj} \times_{X^{\lambda}} Z^{\lambda}_{=\mu}, \] and that the former surjects onto $\overline{\Bun}_{N^{\omega},=\mu}$. Theorem \ref{zstrata} for the deepest and open strata implies that the cohomologically normalized pullback of $\mathfrak{j}_{=\mu}^!\Psi(\mathscr{W})$ to $(X^{\mu} \times \mathring{Z}^{\lambda - \mu})_{\disj}$ is $\Omega^{\mu} \boxtimes \IC_{\mathring{Z}^{\lambda - \mu}}$. Theorem \ref{strata} follows once we observe that the composition \[ (X^{\mu} \times \mathring{Z}^{\lambda - \mu})_{\disj} \longrightarrow \overline{\Bun}_{N^{\omega},=\mu} \stackrel{\mathfrak{m}_{\mu}}{\longrightarrow} X^{\mu} \] is the projection onto the first factor and apply Lemma \ref{adelicinv}.

The previous paragraph implies Theorem \ref{zstrata} holds on the stratum $Z_{=\mu}^{\lambda}$. Let $\lambda' \geq \mu$ and change $\lambda$ if necessary so that $\lambda \geq \lambda'$. By restricting along the morphism \[ (Z^{\lambda'}_{=\mu} \times \mathring{Z}^{\lambda - \lambda'})_{\disj} \longrightarrow Z^{\lambda}_{=\mu} \] and invoking factorization, we obtain Theorem \ref{zstrata}.

The remainder of Theorems \ref{strata} and \ref{zstrata} follows as in the first proof.

\end{proof}

\section{First proof of Theorem \ref{main}}

\subsection{} Like Theorem \ref{strata}, we formulate the analogue of Theorem \ref{main} on Zastava space. 

\begin{theorem}

For any $\lambda \in \Lambda^{\pos}$, there is an $\mathfrak{sl}_2$-equivariant isomorphism of factorizable sheaves
\begin{equation}
\gr \Psi( \, \! \mathscr{W}_{Z^{\lambda}}) \ \tilde{\longrightarrow} \bigoplus_{0 \leq \mu \leq \lambda} \ \! ^{\prime}\mathfrak{j}_{=\mu,!*}^{\lambda} \ \! \! ^{\prime}\mathfrak{m}^{\lambda,\Delta}_{\mu}\mathscr{P}^{\mu}.
\label{zmaineq}
\end{equation}

\label{zmain}

\end{theorem}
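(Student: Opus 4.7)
The plan combines three ingredients: the factorization structure on Zastava space, Theorem \ref{zstrata}, and a semisimplicity argument via Mochizuki's theory of weights. Both sides of (\ref{zmaineq}) are factorizable perverse sheaves on $Z^\lambda$ equipped with $\mathfrak{sl}_2$-action: for the right hand side this is immediate from the construction of $\mathscr{P}^\mu$, and for the left hand side $\mathscr{W}_{Z^\lambda}$ is factorizable, unipotent nearby cycles commutes with external products by K\"{u}nneth (as used already in the passage between Theorems \ref{zstrata} and its proof), and the monodromy filtration is intrinsic, so $\gr \Psi(\mathscr{W}_{Z^\lambda})$ inherits a factorization structure together with its $\mathfrak{sl}_2$-action. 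Together with semisimplicity (below), this reduces the construction of the isomorphism to the $!$-restriction to the deepest stratum $X^\mu = Z^\mu_{=\mu} \hookrightarrow Z^\mu$ for each $0 \leq \mu \leq \lambda$, because a semisimple factorizable perverse sheaf is determined by its $!$-fibers at the main diagonals.

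On the deepest stratum, Theorem \ref{zstrata} identifies $^{\prime}\mathfrak{j}^{\mu,!}_{=\mu}\Psi(\mathscr{W}_{Z^\mu})$ with $\Omega^\mu$, whose $!$-stalks at a point $\sum_i \mu_i x_i$ are $\bigotimes_i C^{\bullet}(\check{\mathfrak{n}})^{\mu_i}$. A PBW-type filtration on the cohomological Chevalley complex $C^{\bullet}(\check{\mathfrak{n}})$ has associated graded identifying $\gr \bigotimes_i C^{\bullet}(\check{\mathfrak{n}})^{\mu_i}$ with $\bigoplus_{\mathfrak{k} \in \Kost(\mu)} \bigotimes_{\beta} (\sgn \otimes \std^{\otimes n_\beta})$, which is precisely the $!$-stalk of $\mathscr{P}^\mu$ at the corresponding point, with Lefschetz $\mathfrak{sl}_2$ acting on each $\std^{\otimes n_\beta}$ factor. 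The task on this stratum is to verify that the filtration induced on $\Omega^\mu$ by the monodromy filtration on $\Psi(\mathscr{W}_{Z^\mu})$ globalizes this stalkwise PBW filtration, which should be automatic from the description of $\Omega^\mu$ in terms of the Koszul-dual Chevalley complex.

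The main obstacle is semisimplicity of $\gr \Psi(\mathscr{W}_{Z^\lambda})$. Once granted, $\gr \Psi(\mathscr{W}_{Z^\lambda})$ decomposes as a direct sum of IC sheaves extended from defect strata; $!$-restriction to each stratum, combined with Theorem \ref{zstrata} and the stalk calculation above, then forces the decomposition and its $\mathfrak{sl}_2$-structure to match (\ref{zmaineq}) fiberwise, and hence globally by factorizability. To obtain semisimplicity, the plan is to endow $\mathscr{W}_{Z^\lambda}$ with a weight filtration using Mochizuki's theory of weights for (possibly irregular) holonomic D-modules. The general principle that Beilinson's nearby cycles of a pure object has monodromy filtration equal to the weight filtration up to a shift would then imply that $\gr \Psi(\mathscr{W}_{Z^\lambda})$ is pure, hence semisimple. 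The delicate part is verifying Mochizuki's hypotheses for the irregular D-module $\mathscr{W}_{Z^\lambda}$ and matching the two filtrations compatibly with factorization; the combinatorial identification of $\gr C^{\bullet}(\check{\mathfrak{n}})^\mu$ with the Kostant-partition description of $\mathscr{P}^\mu$ is essentially the PBW theorem and should be routine by comparison.
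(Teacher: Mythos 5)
Your reduction to stalks at the main diagonals is where the argument breaks. The identification you rely on is false: the $!$-stalk of $\Omega^{\mu}$ (equivalently, of $^{\prime}\mathfrak{j}^{\mu,!}_{=\mu}\Psi(\mathscr{W}_{Z^{\mu}})$ by Theorem \ref{zstrata}) does not become the $!$-stalk of $\mathscr{P}^{\mu}$ after passing to any associated graded. Already for $\mu = 2\alpha$ with $\alpha$ a simple coroot: at a point $\alpha x_1 + \alpha x_2$ with $x_1 \neq x_2$ the stalk of $\Omega^{2\alpha}$ is $C^{\bullet}(\check{\mathfrak{n}})^{\alpha} \otimes C^{\bullet}(\check{\mathfrak{n}})^{\alpha}$, which is one-dimensional, whereas the stalk of $\mathscr{P}_2$ is $\sgn \otimes \std^{\otimes 2}$, which is four-dimensional; at a diagonal point the stalk of $\Omega^{2\alpha}$ vanishes (it is a clean extension) while that of $\mathscr{P}_2$ does not. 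The reason is that the restriction of the right-hand side of (\ref{zmaineq}) to the stratum $=\mu$ receives contributions from the intermediate extensions off all shallower strata $\nu < \mu$, and $!$-restriction does not commute with $\gr$; the relation between $\Omega^{\mu}$ and the $\mathscr{P}^{\nu}$ is only an identity in the Grothendieck group involving all strata (cf.\ Examples \ref{2simpex} and \ref{sumsimpex}), not a pointwise PBW statement about $C^{\bullet}(\check{\mathfrak{n}})$. For the same reason the principle ``a semisimple factorizable perverse sheaf is determined by its $!$-fibers at the main diagonals'' cannot be invoked as a reduction; the paper instead runs an induction on $\mu$, uses factorization to produce the isomorphism on $Z^{\mu} \setminus \Delta(X)$ (including a descent argument along the \'etale, non-embedding factorization maps, reduced to the $n=2$ case of Example \ref{2simpex}), and only then extends across the main diagonal.

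Second, the genuinely delicate point is invisible in your plan: when $\mu$ is a coroot, $\gr \Psi(\mathscr{W}_{Z^{\mu}})$ contains $\Delta_*\IC_X$ with multiplicity exactly two, and one must show that the Lefschetz $\mathfrak{sl}_2$ acts on this multiplicity space by $\std$ rather than trivially. This is not detected by stalks or K-theory classes, and semisimplicity via Mochizuki (which is fine, and is exactly how the paper's first proof obtains semisimplicity) does not decide it. The paper needs an additional input: Lemma \ref{threestep} produces an indecomposable three-step subquotient $\mathscr{M}$ of $\Psi(\mathscr{W})$ with outer pieces $\Delta^{\mu}_*\IC_{X \times_{\Bun_T} \overline{\Bun}_B}$ and middle piece $\IC_{\overline{\Bun}_{N^{\omega}}}$, and if the action on the two copies of $\Delta_*\IC_X$ were trivial then $\mathscr{M}$ would be a subquotient of the semisimple $F_0/F_{-1}$, a contradiction (the second proof replaces this by the FFKM action of $\check{\mathfrak{g}}$ through Lemma \ref{desclem}). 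Your ``should be automatic'' PBW step is precisely where this argument has to happen, so as written the proposal has a genuine gap there, in addition to the incorrect stalk identification above.
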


Now we work out three of the simplest cases of Theorem \ref{zmain}. For brevity, we will write $\Psi := \Psi( \, \! \mathscr{W}_{Z^{\lambda}})$.

\begin{example}

\label{simpex}

Let $\alpha$ be a simple coroot. There is an isomorphism $Z^{\alpha} \cong X \times \A^1$ under which $\mathring{Z}^{\alpha} \cong X \times \mathring{\A}^1$, where $\mathring{\A}^1 := \A^1 \setminus \{ 0 \}$. The canonical map $\mathring{Z}^{\alpha} \to \A^1$ is given in these terms by $(x,t) \mapsto \frac{1}{t}$. It follows from Example 4.3 of \cite{C1} that $\Psi$ is the cohomologically normalized pullback of the unique indecomposable tilting sheaf on $\A^1$ which extends $\IC_{\mathring{\A}^1}$. Moreover, the monodromy filtration \[ F_{-1} \subset F_0 \subset F_1 = \Psi \] satisfies $F_{-1} \cong \IC_{Z^{\alpha}_{=\alpha}}$, $F_0/F_{-1} \cong \IC_{Z^{\alpha}}$, and $F_1/F_0 \cong \IC_{Z^{\alpha}_{=\alpha}}$, where $Z^{\alpha}_{=\alpha} = X \times \{ 0 \} \subset X \times \A^1 = Z^{\alpha}$. This confirms Theorem \ref{zmain} in the case $\lambda = \alpha$.

\end{example}

\begin{example}

\label{2simpex}

Now consider the case $\lambda = 2\alpha$. By Theorem \ref{zstrata} for $\mu = 2\alpha$, we have a short exact sequence \[ \Omega^{2\alpha} \longrightarrow \Psi \longrightarrow \! \, ^{\prime}\mathfrak{j}^{2\alpha}_{\leq \alpha,*} \! \, ^{\prime}\mathfrak{j}^{2\alpha,*}_{\leq \alpha}\Psi \] (recall that $\Omega^{2\alpha}$ is the clean extension of the sign local system on $X^{(2)}_{\disj}$). Similarly, applying Theorem \ref{zstrata} for $\mu = \alpha$ and $0$ we obtain an exact triangle \[ ^{\prime}\mathfrak{j}^{2\alpha}_{=\alpha,*}\IC_{Z^{2\alpha}_{=\alpha}} \longrightarrow \! \, ^{\prime}\mathfrak{j}^{2\alpha}_{\leq \alpha,*} \! \, ^{\prime}\mathfrak{j}^{2\alpha,*}_{\leq \alpha}\Psi \longrightarrow \! \, ^{\prime}\mathfrak{j}^{2\alpha}_{=0,*}\IC_{\mathring{Z}^{2\alpha}}, \] where we used the fact that $\Omega^{\alpha} \cong \IC_X$. Applying Verdier duality to the equation in Corollary 4.5 of \cite{BG2} (or rather the analoguous equation on Zastava space), we have \[ [ \! \, ^{\prime}\mathfrak{j}^{2\alpha}_{=0,*}\IC_{\mathring{Z}^{2\alpha}}] = [\IC_{Z^{2\alpha}}] + [\IC_{\overline{Z^{2\alpha}_{=\alpha}}}] + [\Omega^{2\alpha}], \] where we identified $\Omega^{2\alpha}$ with its Verdier dual. Finally, one computes the simple constituents of $^{\prime}\mathfrak{j}^{2\alpha}_{=\alpha,*}\IC_{Z^{2\alpha}_{=\alpha}}$ as follows: first, consider the short exact sequence \[ \IC_{\overline{Z^{2\alpha}_{=\alpha}}} \longrightarrow \! \, ^{\prime}\mathfrak{j}^{2\alpha}_{=\alpha,*}\IC_{Z^{2\alpha}_{=\alpha}} \longrightarrow \! \, ^{\prime}\mathfrak{j}^{2\alpha,!}_{=2\alpha}\IC_{\overline{Z^{2\alpha}_{=\alpha}}}[1]. \] To compute the third term, observe that there is a Cartesian square \[
\begin{tikzcd}[cramped]
X^2 \ar[d] \ar[r] & Z^{2\alpha}_{\geq \alpha} \ar[d, "^{\prime}\mathfrak{j}^{2\alpha}_{\geq \alpha}"] \\
X^{(2)} \ar[r, "^{\prime}\mathfrak{j}^{2\alpha}_{=2\alpha}"] & Z^{2\alpha},
\end{tikzcd} \]
and that the $!$-restriction of $\IC_{Z^{2\alpha}_{\geq \alpha}}$ along the top horizontal morphism is $\IC_{X^2}[-1]$. Since $^{\prime}\mathfrak{j}^{2\alpha}_{\geq \alpha}$ is finite and birational onto its image, we can use base change to compute \[ ^{\prime}\mathfrak{j}^{2\alpha,!}_{=2\alpha}\IC_{\overline{Z^{2\alpha}_{=\alpha}}}[1] = \! \, ^{\prime}\mathfrak{j}^{2\alpha,!}_{=2\alpha} \! \, ^{\prime}\mathfrak{j}^{2\alpha}_{\geq \alpha,*}\IC_{Z^{2\alpha}_{\geq \alpha}}[1] = \IC_{Z^{2\alpha}_{=2\alpha}} \! \oplus \Omega^{2\alpha}. \] Summarizing, we have \[ [\Psi] = [\IC_{Z^{2\alpha}}] + 2[\IC_{\overline{Z^{2\alpha}_{=\alpha}}}] + [\IC_{Z^{2\alpha}_{=2\alpha}}] + 3[\Omega^{2\alpha}]. \] 

Now we will determine which graded component of $\gr \Psi$ each simple subquotient lies in. In what follows, ``weight" refers to an eigenvalue of the Lefschetz Cartan operator. Since the monodromy filtration is compatible with the factorization structure, when we pull back $\gr \Psi$ along the factorization map \[ (Z^{\alpha} \times Z^{\alpha})_{\disj} \longrightarrow Z^{2\alpha} \] we get $(\gr \Psi( \, \! \mathscr{W}_{Z^{\alpha}}))^{\boxtimes 2}$ restricted to $(Z^{\alpha} \times Z^{\alpha})_{\disj}$. By the previous example, the latter sheaf with $\mathfrak{sl}_2$-action is isomorphic to
\begin{equation}
(\std^{\otimes 2} \otimes \IC_{X^2}) \oplus (\std \otimes \IC_{X \times Z^{\alpha}}) \oplus (\std \otimes \IC_{Z^{\alpha} \times X}) \oplus (\triv \otimes \IC_{Z^{\alpha} \times Z^{\alpha}}).
\label{rk2fact}
\end{equation}
It follows immediately that $\IC_{Z^{2\alpha}}$ has weight $0$ and that the two copies of $\IC_{\overline{Z^{2\alpha}_{=\alpha}}}$ have weights $\pm 1$. Since $\std^{\otimes 2} \cong V_2 \oplus \triv$, the three copies of $\Omega^{2\alpha}$ have weights $-2$, $0$, and $2$, and $\IC_{Z^{2\alpha}_{=2\alpha}}$ has weight $0$.

In terms of the monodromy filtration \[ F_{-2} \subset F_{-1} \subset F_0 \subset F_1 \subset F_2 = \Psi, \] we have $F_{-2} \cong \Omega^{2\alpha} \cong F_2/F_0$, $F_{-1}/F_{-2} \cong \IC_{\overline{Z^{2\alpha}_{=\alpha}}} \cong F_1/F_0$, and $F_0/F_{-1}$ has simple constituents $\IC_{Z^{2\alpha}}$, $\IC_{Z^{2\alpha}_{=2\alpha}}$, and $\Omega^{2\alpha}$. So in order to prove Theorem \ref{zmain} in the case $\lambda = 2\alpha$, it remains to show that $F_0/F_{-1}$ is semisimple. Its pullback along the factorization map is semisimple, and since semisimplicity is \'{e}tale local, the restriction of $F_0/F_{-1}$ to $Z^{2\alpha} \setminus \pi^{-1}(X)$ is semisimple. But $\Psi$ has no simple subquotients supported on $\pi^{-1}(X)$, so $F_0/F_{-1}$ is the intermediate extension of its restriction to $Z^{2\alpha} \setminus \pi^{-1}(X)$ and therefore semisimple.

\end{example}

\begin{example}

\label{sumsimpex}

Suppose $\lambda = \alpha + \beta$ is a coroot, where $\alpha$ and $\beta$ are distinct simple coroots. Applying Theorem \ref{zstrata} for $\mu = \alpha + \beta$, we obtain the short exact sequence \[ \Omega^{\lambda} \longrightarrow \Psi \longrightarrow \! \, ^{\prime}\mathfrak{j}^{\lambda}_{< \lambda,*} \! \, ^{\prime}\mathfrak{j}^{\lambda,*}_{< \lambda}\Psi. \] Similar considerations yield the short exact sequence \[ ^{\prime}\mathfrak{j}^{\lambda}_{=\alpha,*}\IC_{Z^{\lambda}_{=\alpha}} \oplus \! \, ^{\prime}\mathfrak{j}^{\lambda}_{=\beta,*}\IC_{Z^{\lambda}_{=\beta}} \longrightarrow \! \, ^{\prime}\mathfrak{j}^{\lambda}_{< \lambda,*} \! \, ^{\prime}\mathfrak{j}^{\lambda,*}_{< \lambda}\Psi \longrightarrow \! \, ^{\prime}\mathfrak{j}^{\lambda}_{=0,*}\IC_{\mathring{Z}^{\lambda}}. \] Applying Corollary 4.5 of \cite{BG2}, we have \[ [ \! \, ^{\prime}\mathfrak{j}^{\lambda}_{=0,*}\IC_{\mathring{Z}^{\lambda}}] = [\IC_{Z^{\lambda}}] + [\IC_{\overline{Z^{\lambda}_{=\alpha}}}] + [\IC_{\overline{Z^{\lambda}_{=\beta}}}] + [\Upsilon^{\lambda}], \] where $\Upsilon^{\lambda}$ is the Verdier dual of $\Omega^{\lambda}$. According to Section 1.3.2 in \cite{AG}, in this case $\Omega^{\lambda}$ is the $*$-extension of $\IC_{X^2_{\disj}}$ to $X^{\lambda} = X^2$, whence $\Upsilon^{\lambda}$ is the $!$-extension. In particular $[\Omega^{\lambda}] = [\Upsilon^{\lambda}] = [\IC_{X^2}] + [\IC_X]$. As for the remaining simple constituents, consider the short exact sequence \[ \IC_{\overline{Z^{\lambda}_{=\alpha}}} \longrightarrow \! \, ^{\prime}\mathfrak{j}^{\lambda}_{=\alpha,*}\IC_{Z^{\lambda}_{=\alpha}} \longrightarrow \! \, ^{\prime}\mathfrak{j}^{\lambda,!}_{=\alpha}\IC_{\overline{Z^{\lambda}_{=\alpha}}}[1], \] and similarly for $\beta$. The third term is $\IC_{X^2}$, so finally we see that \[ [\Psi] = [\IC_{Z^{\lambda}}] + 2[\IC_{\overline{Z^{\lambda}_{=\alpha}}}] + 2[\IC_{\overline{Z^{\lambda}_{=\beta}}}] + 4[\IC_{X^2}] + 2[\IC_X]. \]

Now we compute the weights of the simple subquotients of $\Psi$. We have the factorization morphism \[ (Z^{\alpha} \times Z^{\beta})_{\disj} \longrightarrow Z^{\alpha+\beta}, \] and after pulling back $\gr \Psi$ the result is (\ref{rk2fact}), up to relabeling $\beta$ as $\alpha$. As in Example \ref{2simpex}, it follows that $\IC_{Z^{\alpha+\beta}}$ has weight $0$, the two copies of $\IC_{\overline{Z^{\lambda}_{=\alpha}}}$ have weights $\pm 1$ and likewise for $\IC_{\overline{Z^{\lambda}_{=\beta}}}$, and the four copies of $\IC_{X^2}$ have weights $-2$, $0$, $0$, and $2$. We will see below that in any case where $\lambda$ is a coroot, there are two simple subquotients of $\Psi$ isomorphic to $\IC_X$, with weights $\pm 1$. In terms of the monodromy filtration, we have $F_{-2} \cong \IC_{X^2} \cong F_2/F_1$, $F_{-1}/F_{-2}$ and $F_1/F_0$ each have simple subquotients $\IC_{\overline{Z^{\lambda}_{=\alpha}}}$, $\IC_{\overline{Z^{\lambda}_{=\beta}}}$, and $\IC_X$, and $F_0/F_{-1}$ has simple subquotients $\IC_{Z^{\lambda}}$ and $\IC_{X^2}$, the latter with multiplicity two. As in Example \ref{2simpex}, one uses factorization to show that $F_0/F_{-1}$ is semisimple. To prove the semisimplicity of $F_{-1}/F_{-2}$ and $F_1/F_0$, it is enough to show that there are no extensions between $\IC_X$ and $\IC_{\overline{Z^{\lambda}_{=\alpha}}}$. By Verdier duality, it suffices to prove that \[ \Ext^1_{D(Z^{\lambda})}(\IC_X,\IC_{\overline{Z^{\lambda}_{=\alpha}}}) = 0. \] We have $^{\prime}\mathfrak{j}^{\lambda,!}_{=\lambda}\IC_{\overline{Z^{\lambda}_{=\alpha}}} = \IC_{X^2}[-1]$, whence $\Delta^!\IC_{\overline{Z^{\lambda}_{=\alpha}}} = \IC_X[-2]$. Thus \[ \RHom_{D(Z^{\lambda})}(\IC_X,\IC_{\overline{Z^{\lambda}_{=\alpha}}}) = H^{\bullet}(X)[-2], \] and in particular $\Ext^1$ vanishes.

\end{example}

\subsection{} The following lemma will be used in both proofs of Theorems \ref{main} and \ref{zmain}. For any $\mu \in \Lambda^{\pos}$ write \[ \Delta^{\mu} : X \times_{\Bun_T} \overline{\Bun}_B \longrightarrow \overline{\Bun}_{N^{\omega}} \] for the finite birational map defined as the composition of $\mathfrak{j}_{\geq \mu}$ and the embedding \[ X \times_{\Bun_T} \overline{\Bun}_B = X \times_{X^{\mu}} \overline{\Bun}_{N^{\omega},\geq \mu} \longrightarrow \overline{\Bun}_{N^{\omega},\geq \mu} \] induced by the diagonal map $X \to X^{\mu}$.

\begin{lemma}

If $\mu$ is a coroot, then there is an indecomposable subquotient $\mathscr{M}$ of $\Psi(\mathscr{W})$ with a filtration \[ \mathscr{M}_{-1} \subset \mathscr{M}_0 \subset \mathscr{M}_1 = \mathscr{M} \] such that $\mathscr{M}_{-1} \cong \Delta^{\mu}_*\IC_{X \times_{\Bun_T} \overline{\Bun}_B}$, $\mathscr{M}_0/\mathscr{M}_{-1} \cong \IC_{\overline{\Bun}_{N^{\omega}}}$, and $\mathscr{M}/\mathscr{M}_0 \cong \Delta^{\mu}_*\IC_{X \times_{\Bun_T} \overline{\Bun}_B}$.

\label{threestep}

\end{lemma}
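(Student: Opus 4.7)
The plan is to descend the statement to the Zastava space, where factorization lets us reduce to a minimal local model. Since ${}^{\prime}\mathfrak{p}^{-,\lambda} : Z^{\lambda} \to \overline{\Bun}_{N^{\omega}}$ is smooth for $\lambda$ sufficiently dominant and its image can be made to cover any given quasicompact open, and since $\Psi$ commutes with smooth pullback, it suffices to produce an analogous three-step indecomposable subquotient of $\Psi(\mathscr{W}_{Z^{\lambda}})$ whose outer pieces are the cohomologically normalized pullback of $\Delta^{\mu}_*\IC_{X \times_{\Bun_T} \overline{\Bun}_B}$ and whose middle is $\IC_{Z^{\lambda}}$. Under ${}^{\prime}\mathfrak{p}^{-,\lambda}$ the outer pullback is the $\IC$ of the preimage inside $Z^{\lambda}_{\geq \mu}$ of the small diagonal $X \hookrightarrow X^{\mu}$ (the map $x \mapsto \mu \cdot x$).

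Next I would use factorization to reduce to $\lambda = \mu$. The \'{e}tale factorization map $(Z^{\mu} \times Z^{\lambda-\mu})_{\disj} \to Z^{\lambda}$ pulls $\Psi(\mathscr{W}_{Z^{\lambda}})$ back to $\Psi(\mathscr{W}_{Z^{\mu}}) \boxtimes \Psi(\mathscr{W}_{Z^{\lambda-\mu}})$ by the K\"{u}nneth formula for unipotent nearby cycles, valid because $\mathscr{W}_Z$ is $\G_m$-equivariant. Given a three-step subquotient of $\Psi(\mathscr{W}_{Z^{\mu}})$ of the desired shape, I would box-product it with the weight-zero generic piece $\IC_{\mathring{Z}^{\lambda-\mu}}$ sitting inside $\Psi(\mathscr{W}_{Z^{\lambda-\mu}})|_{\mathring{Z}^{\lambda-\mu}}$, then intermediate-extend across $Z^{\lambda}$, invoking factorizability to glue consistently.

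The remaining task is to construct the three-step subquotient of $\Psi(\mathscr{W}_{Z^{\mu}})$ directly when $\mu$ is a coroot. When $\mu = \alpha$ is simple, Example \ref{simpex} identifies $\Psi(\mathscr{W}_{Z^{\alpha}})$ itself with the pullback to $X \times \A^1$ of the unique indecomposable tilting extension of $\IC_{\mathring{\A}^1}$, whose monodromy filtration is precisely the asserted three-step structure. For a general coroot $\mu$, Theorem \ref{zstrata} gives the $!$-restriction to the deepest stratum as $\Omega^{\mu}$, and Corollary 4.5 of \cite{BG2} together with the same kind of composition-factor count carried out in Example \ref{sumsimpex} shows that two copies of the small-diagonal $\IC$ appear at monodromy weights $\pm 1$, bracketing the weight-zero copy of $\IC_{Z^{\mu}}$; the hypothesis that $\mu$ is a coroot is essential because it is exactly what produces the single-part Kostant partition $(\mu)$ responsible for these small-diagonal summands.

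The main obstacle is indecomposability, i.e.\ showing that the monodromy operator links the two weight-$\pm 1$ copies of the small-diagonal $\IC$ nontrivially so that the three-step filtration cannot split. In the simple coroot case this is immediate from Example \ref{simpex}. In the non-simple case, I would pull back along the factorization map $(Z^{\alpha} \times Z^{\mu-\alpha})_{\disj} \to Z^{\mu}$ for a simple coroot $\alpha \leq \mu$, use the compatibility of the monodromy filtration with factorization (as noted in Example \ref{2simpex}), and reduce the non-vanishing of the monodromy connection between the outer pieces to the already-established simple case applied to the first factor.
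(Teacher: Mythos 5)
There is a genuine gap, and it sits exactly at the step you identify as "the main obstacle." The two outer pieces $\Delta^{\mu}_*\IC_{X \times_{\Bun_T} \overline{\Bun}_B}$ (on Zastava, the two copies of $\Delta_*\IC_X$) are supported over the \emph{main diagonal} of $X^{\mu}$, and the factorization maps $(Z^{\alpha} \times Z^{\mu-\alpha})_{\disj} \to Z^{\mu}$ only cover the complement of the preimage of that diagonal; the subquotients in question restrict to zero there. So pulling back along a factorization map and invoking Example \ref{simpex} cannot detect whether the monodromy operator links the two diagonal-supported copies, and your reduction of indecomposability to the simple-coroot case collapses. Worse, your assertion that these two copies occur "at monodromy weights $\pm 1$" is precisely the statement that the paper cannot obtain from factorization and defers (see the remark in Example \ref{sumsimpex}): in both proofs of Theorem \ref{main} the alternative that $\mathfrak{sl}_2$ acts trivially on $\IC_X^{\oplus 2}$ is ruled out \emph{using} Lemma \ref{threestep} (together with semisimplicity of $\gr\Psi$, or with Lemma \ref{desclem} via the relation $[e_{\mu},f_{\mu}]=h_{\mu}$ from \cite{FFKM}). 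Assuming the weight-$\pm 1$ structure in the proof of Lemma \ref{threestep} therefore makes the overall argument circular.

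The paper's construction avoids the monodromy filtration entirely and is purely homological, carried out directly on $\overline{\Bun}_{N^{\omega}}$: the subsheaf $\mathfrak{j}_{=0,!}\IC_{\Bun_{N^{\omega}}} \subset \Psi(\mathscr{W})$ carries a filtration with subquotients $\mathfrak{j}_{=\nu,!*}\mathfrak{r}^{\Delta}_{\nu}\Omega^{\nu}$; since $\mu$ is a coroot, the piece for $\nu=\mu$ has $\Delta^{\mu}_*\IC_{X \times_{\Bun_T} \overline{\Bun}_B}$ as a quotient, and an $\Ext^1$-vanishing against the intermediate pieces ($0<\nu<\mu$), proved by base change and a cohomological-degree estimate for $\Delta^!\Omega^{\nu} \overset{!}{\otimes} \Delta^{\mu,!}\IC_{\overline{\Bun}_{N^{\omega}}}$, lets one splice off a quotient $\mathscr{M}_0$ of $\mathfrak{j}_{=0,!}\IC_{\Bun_{N^{\omega}}}$ extending $\IC_{\overline{\Bun}_{N^{\omega}}}$ by $\Delta^{\mu}_*\IC$; nonsplitness comes from the simple cosocle of $\mathfrak{j}_{=0,!}\IC_{\Bun_{N^{\omega}}}$, and the dual argument inside $\mathfrak{j}_{=0,*}\IC_{\Bun_{N^{\omega}}}$ (a quotient of $\Psi(\mathscr{W})$ by tilting) produces $\mathscr{M}/\mathscr{M}_{-1}$, whence $\mathscr{M}$. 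If you want to salvage your approach you would need an argument of this kind for the linkage across the diagonal; factorization alone cannot supply it.
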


\begin{proof}

First, we claim the subsheaf $\mathfrak{j}_{=0,!}\IC_{\Bun_{N^{\omega}}}$ of $\Psi(\mathscr{W})$ has a quotient $\mathscr{M}_0$ of the form described above. Recall that $\mathfrak{j}_{=0,!}\IC_{\Bun_{N^{\omega}}}$ has a descending filtration with subquotients $\mathfrak{j}_{=\nu,!*}\mathfrak{r}^{\Delta}_{\nu}\Omega^{\nu}$, and in particular has $\mathfrak{j}_{=\mu,!*}\Omega^{\mu}$ as a subquotient and $\IC_{\Bun_{N^{\omega}}}$ as a quotient. The former sheaf has $\Delta^{\mu}_*\IC_{X \times_{\Bun_T} \overline{\Bun}_B}$ as a quotient because $\mu$ is a coroot, so it suffices to show that that for any $0 < \nu < \mu$ we have \[ \Ext^1(\Delta^{\mu}_*\IC_{X \times_{\Bun_T} \overline{\Bun}_B},\mathfrak{j}_{=\nu,!*}\mathfrak{r}^{\Delta}_{\nu}\Omega^{\nu}) = 0. \] One computes using base change that \[ \Delta^{\mu,!}\mathfrak{j}_{=\nu,!*}\mathfrak{r}^{\Delta}_{\nu}\Omega^{\nu} = \Delta^!\Omega^{\nu} \stackrel{!}{\otimes} \Delta^{\mu,!}\IC_{\overline{\Bun}_{N^{\omega}}}. \] Since $\Delta^{\mu,!}\IC_{\overline{\Bun}_{N^{\omega}}}$ is concentrated in cohomological degrees $\geq 1$, and both $\Delta^!\Omega^{\nu}$ and $\Delta^{\mu,!}\IC_{\overline{\Bun}_{N^{\omega}}}$ have lisse (actually constant) cohomology sheaves, their $!$-tensor product is concentrated in cohomological degrees $\geq 2$. It follows that the $\Ext^1$ above vanishes.

We have shown that $\mathfrak{j}_{=0,!}\IC_{\Bun_{N^{\omega}}}$ has a quotient $\mathscr{M}_0$ which fits into a short exact sequence \[ \Delta^{\mu}_*\IC_{X \times_{\Bun_T} \overline{\Bun}_B} \longrightarrow \mathscr{M}_0 \longrightarrow \IC_{\overline{\Bun}_{N^{\omega}}}. \] This sequence does not split because $\mathscr{M}_0$ is the quotient of the indecomposable sheaf $\mathfrak{j}_{=0,!}\IC_{\Bun_{N^{\omega}}}$ with simple cosocle $\IC_{\overline{\Bun}_{N^{\omega}}}$. Dually, we obtain a subsheaf $\mathscr{M}/\mathscr{M}_{-1}$ of $\mathfrak{j}_{=0,*}\IC_{\Bun_{N^{\omega}}}$ of the desired form, from which follows the existence of $\mathscr{M}$.

\end{proof}

We will first give a proof of Theorem \ref{main} under the assumption that $\gr \Psi(\mathscr{W})$ is semisimple. The semisimplicity can be proved via Mochizuki's theory of weights for holonomic $D$-modules, since $\mathscr{W}$ is pure and (up to shift) the monodromy filtration on nearby cycles of a pure sheaf coincides with the weight filtration (see Corollary 9.1.10 in \cite{M}).

\begin{proof}[First proof of Theorems \ref{main} and \ref{zmain}]

Both sides of the isomorphism (\ref{maineq}) restrict to $\IC_{\Bun_{N^{\omega}}}$. Suppose that we have constructed the isomorphism over $\overline{\Bun}_{N^{\omega},< \mu}$. Then for $\lambda \geq \mu$ sufficiently dominant, pulling back along $^{\prime}\mathfrak{p}^-$ yields the isomorphism (\ref{zmaineq}) over $Z^{\lambda}_{<\mu}$. One obtains (\ref{zmaineq}) on $Z^{\mu}_{<\mu}$ by pullback along the factorization map \[ (Z^{\mu}_{<\mu} \times \mathring{Z}^{\lambda-\mu})_{\disj} \longrightarrow Z^{\lambda}_{<\mu}, \] since the inverse images of both sides of the isomorphism factorize and are constant along the second component. The same argument yields (\ref{zmaineq}) on $Z^{\mu'}_{<\mu} = Z^{\mu'}$ for $\mu' < \mu$.

On the other hand, one can use factorization to obtain (\ref{zmaineq}) on $Z^{\mu} \setminus \pi^{-1}(X)$. Namely, for $\mu_1 + \mu_2 = \mu$, $\mu_1,\mu_2 < \mu$, the pullback of both sides of the isomorphism along \[ (Z^{\mu_1} \times Z^{\mu_2})_{\disj} \longrightarrow Z^{\mu} \] are identified. Since this factorization map is \'{e}tale but not necessarily an embedding, we must argue that the isomorphism descends to its image. This immediately reduces to the case that $\mu = n \cdot \alpha$ for some $\alpha \in \Delta$. In this case, both sides of (\ref{zmaineq}) are the intermediate extension of their restriction to $\pi^{-1}(X^{\mu}_{\disj})$, so it suffices to show that the isomorphism over $(Z^{\alpha})^n_{\disj}$ descends to $\pi^{-1}(X^{\mu}_{\disj}) \subset Z^{\mu}$. Since $(Z^{\alpha})^n_{\disj}$ is a $\Sigma_n$-torsor over $\pi^{-1}(X^{\mu}_{\disj})$ and $\Sigma_n$ is generated by transpositions, the claim reduces to the case $n = 2$. But this was already done in Example \ref{2simpex}.

Note that $Z^{\mu}_{<\mu} \cup (Z^{\mu} \setminus \pi^{-1}(X)) = Z^{\mu} \setminus \Delta(X)$. The isomorphisms of the previous two paragraphs clearly agree on $Z^{\mu}_{<\mu} \cap (Z^{\mu} \setminus \pi^{-1}(X))$, hence glue to an isomorphism away from the main diagonal.

If $\mu$ is not a coroot, then we claim that $\Psi( \, \! \mathscr{W}_{Z^{\mu}})$ has no simple subquotients supported on the main diagonal, whence $\gr \Psi(\mathscr{W}_{Z^{\mu}})$ is the intermediate extension of its restriction to $Z^{\mu} \setminus \Delta(X)$. This is true for the right hand side of (\ref{zmaineq}) by construction, so the claim implies that the isomorphism extends to $Z^{\mu}$ in this case. By Theorem \ref{zstrata} there is a filtration of $\Psi( \, \! \mathscr{W}_{Z^{\mu}})$ by the sheaves $^{\prime}\mathfrak{j}^{\mu}_{=\nu,*} \ \! ^{\prime}\mathfrak{m}^{\mu,\Delta}_{\nu}\Omega^{\nu}$ for $0 \leq \nu \leq \mu$. Using Corollary 4.5 of \cite{BG2}, one can show that $^{\prime}\mathfrak{j}^{\mu}_{=\nu,*} \ \! ^{\prime}\mathfrak{m}^{\mu,\Delta}_{\nu}\Omega^{\nu}$ surjects onto $^{\prime}\mathfrak{j}^{\mu}_{=\mu,*}\add_*(\Omega^{\nu} \boxtimes \Upsilon^{\mu - \nu})$, and that no subquotient of the kernel is supported on $X^{\mu}$. Now the claim follows, because out of the latter sheaves only $\Omega^{\mu}$ and $\Upsilon^{\mu}$ could have subquotients supported on the diagonal, and by Section 3.3 of \emph{loc. cit.} this occurs if and only if $\mu$ is a coroot.

Suppose that $\mu$ is a coroot. Then $\Delta_*\IC_X$ appears as a subquotient of $\Omega^{\mu}$ and of $\Upsilon^{\mu}$ with multiplicity one. By the analysis in the previous paragraph $\Delta_*\IC_X$ appears as a summand of $\gr \Psi( \, \! \mathscr{W}_{Z^{\mu}})$ with multiplicity two, and there are no other subquotients supported on the main diagonal. Thus the isomorphism (\ref{zmaineq}) extends to $Z^{\mu}$, and it remains to show that $\mathfrak{sl}_2$ acts on the summand $\IC_X^{\oplus 2}$ of $\gr \Psi( \, \! \mathscr{W}_{Z^{\mu}})$ as the standard representation. 

The only other possibility is that $\mathfrak{sl}_2$ acts on $\IC_X^{\oplus 2}$ trivially, which would imply that the subquotient $\mathscr{M}$ from Lemma \ref{threestep} is a subquotient of $F_0/F_{-1}$. But $\mathscr{M}$ is indecomposable and $F_0/F_{-1}$ is semisimple, so this is impossible.

Having constructed the isomorphism of Theorem \ref{zmain} over $Z^{\mu}$, we can complete the inductive step of Theorem \ref{main} by extending the isomorphism from $\overline{\Bun}_{N^{\omega},< \mu}$ to $\overline{\Bun}_{N^{\omega},\leq \mu}$. Choose $\lambda \geq \mu$ dominant enough that $Z^{\lambda}_{\leq \mu}$ surjects smoothly onto $\overline{\Bun}_{N^{\omega},\leq \mu}$. As in the proof of Theorem \ref{strata}, note that $(Z^{\mu} \times \mathring{Z}^{\lambda - \mu})_{\disj}$ is one of the connected components of the fiber product \[ (X^{\mu} \times X^{\lambda - \mu})_{\disj} \times_{X^{\lambda}} Z^{\lambda}_{\leq \mu}, \] and that the former surjects onto $\overline{\Bun}_{N^{\omega},\leq \mu}$. By factorization, the cohomologically normalized pullback of $\gr \Psi(\mathscr{W})$ to $(Z^{\mu} \times \mathring{Z}^{\lambda - \mu})_{\disj}$ is $(\gr \Psi(\mathscr{W}_{Z^{\mu}})) \boxtimes \IC_{\mathring{Z}^{\lambda - \mu}}$.

A factorization argument as in the proof of Theorem \ref{zstrata} allows us to construct the isomorphism (\ref{zmaineq}) over $Z^{\lambda}_{\leq \mu}$ for arbitrary $\lambda \geq \mu$, which completes the proof of Theorem \ref{zmain}.

\end{proof}

\begin{proof}[Proof of Corollary \ref{kermon}]

It suffices to prove the corresponding claim on $Z^{\lambda}$ for any $\lambda \in \Lambda^{\pos}$. The morphism \[ ^{\prime}\mathfrak{j}^{\lambda}_{=0,!}\IC_{\Bun_{N^{\omega}}} \longrightarrow \Psi( \! \, \mathscr{W}) \] is injective because $\Psi( \! \, \mathscr{W})$ is tilting, so it suffices to show that $^{\prime}\mathfrak{j}^{\lambda}_{=0,!}\IC_{\Bun_{N^{\omega}}}$ and the kernel of monodromy have the same class in the Grothendieck group. By factorization and induction this holds away from the main diagonal, and Corollary 4.5 of \cite{BG2} implies that the only subquotient of $^{\prime}\mathfrak{j}^{\lambda}_{=0,!}\IC_{\Bun_{N^{\omega}}}$ supported on the main diagonal is $\Delta_*\IC_X$ with multiplicity one. Theorem \ref{zmain} implies that the same is true for the kernel of the monodromy operator on $\Psi( \! \, \mathscr{W})$.

\end{proof}

\section{Second proof of Theorem \ref{main}}

\subsection{} In this section we will give a proof of Theorems \ref{main} and \ref{zmain} which does not use weights for irregular holonomic $D$-modules to prove the semisimplicity of $\gr \Psi(\mathscr{W})$, but instead depends on Conjecture \ref{dualact} (but only the part proved in \cite{FFKM}). First we make the statement of the conjecture more precise in the case of a trivial $\check{T}$-local system by specifying the action of generators of $\check{\mathfrak{g}} \otimes H^{\bullet}(X)$ on $\overline{\mathfrak{p}}_*\IC_{\overline{\Bun}_B}$.

We construct the action of $\check{\mathfrak{h}} \otimes H^{\bullet}(X)$ as follows. Pullback along the evaluation map \[ X \times \Bun_T \longrightarrow \pt/T \] defines a homomorphism \[ \Sym(\mathfrak{h}^*[-2]) = H^{\bullet}(\pt/T) \longrightarrow H^{\bullet}(X) \otimes H^{\bullet}(\Bun_T). \] By adjunction we obtain a morphism $\mathfrak{h^*} \otimes H_{\bullet}(X)[-2] \longrightarrow H^{\bullet}(\Bun_T)$. Identifying $\mathfrak{h}^* \cong \check{\mathfrak{h}}$ and $H_{\bullet}(X)[-2] \cong H^{\bullet}(X)$, the latter using Poincar\'{e} duality, we obtain a morphism \[ \check{\mathfrak{h}} \otimes H^{\bullet}(X) \longrightarrow H^{\bullet}(\Bun_T). \] Then the action of $H^{\bullet}(\Bun_T)$ on $\omega_{\Bun_T}$ induces by functoriality the desired action of $\check{\mathfrak{h}} \otimes H^{\bullet}(X)$ on $\Eis_{!*}\omega_{\Bun_T} = \overline{\mathfrak{p}}_*\IC_{\overline{\Bun}_B}$.

Next we construct the action of $\check{\mathfrak{n}} \otimes H^{\bullet}(X)$. Denote by $\mathscr{U}(\check{\mathfrak{n}})$ the factorization algebra whose fiber at $\sum_i \mu_i x_i \in X^{\mu}$ is \[ \bigotimes_i U(\check{\mathfrak{n}})^{\mu_i}, \] where the superscript $\mu_i$ indicates the corresponding $\check{T}$-graded component. A result from \cite{FFKM} was restated as follows in \cite{BG2}: for any $\mu \in \Lambda^{\pos}$ there is a canonical morphism \[ \mathfrak{j}_{\geq \mu,!}(\mathscr{U}(\check{\mathfrak{n}})^{\mu} \boxtimes \IC_{\overline{\Bun}_B}) \longrightarrow \IC_{\overline{\Bun}_B}, \] which induces an isomorphism \[ \mathscr{U}(\check{\mathfrak{n}})^{\mu} \boxtimes \IC_{\Bun_B} \tilde{\longrightarrow} \mathfrak{j}_{=\mu}^!\IC_{\overline{\Bun}_B}. \]

Pushing forward to $\Bun_G$, we obtain a morphism \[ H^{\bullet}(X^{\mu},\mathscr{U}(\check{\mathfrak{n}})^{\mu}) \otimes \overline{\mathfrak{p}}_*\IC_{\overline{\Bun}_B} \longrightarrow \overline{\mathfrak{p}}_*\IC_{\overline{\Bun}_B}. \] The object $\mathscr{U}(\check{\mathfrak{n}})^{\mu}$ is concentrated in (perverse) cohomological degrees $\geq 1$, and if $\mu$ is a coroot then we have $H^1(\mathscr{U}(\check{\mathfrak{n}})^{\mu}) = \IC_X$. The resulting morphism $k_X \to \mathscr{U}(\check{\mathfrak{n}})^{\mu}$ induces \[ H^{\bullet}(X) \otimes \overline{\mathfrak{p}}_*\IC_{\overline{\Bun}_B} \longrightarrow H^{\bullet}(X^{\mu},\mathscr{U}(\check{\mathfrak{n}})^{\mu}) \otimes \overline{\mathfrak{p}}_*\IC_{\overline{\Bun}_B} \longrightarrow \overline{\mathfrak{p}}_*\IC_{\overline{\Bun}_B}, \] which defines the action of $\check{\mathfrak{n}}_{\mu} \otimes H^{\bullet}(X) \cong H^{\bullet}(X)$ on $\overline{\mathfrak{p}}_*\IC_{\overline{\Bun}_B}$.

Dually, for any $\mu \in \Lambda^{\pos}$ there is a canonical morphism \[ \IC_{\overline{\Bun}_B} \longrightarrow \mathfrak{j}_{\geq \mu,*}(\mathscr{U}^{\vee}(\check{\mathfrak{n}}^-)^{\mu} \boxtimes \IC_{\overline{\Bun}_B}), \] which induces an isomorphism \[ \mathfrak{j}_{=\mu}^*\IC_{\overline{\Bun}_B} \tilde{\longrightarrow} \mathscr{U}^{\vee}(\check{\mathfrak{n}}^-)^{\mu} \boxtimes \IC_{\Bun_B}. \] Here $\mathscr{U}^{\vee}(\check{\mathfrak{n}}^-)$ is by definition the Verdier dual of $\mathscr{U}(\check{\mathfrak{n}}^-)$.

Thus we obtain a morphism \[ \overline{\mathfrak{p}}_*\IC_{\overline{\Bun}_B} \longrightarrow H^{\bullet}(X^{\mu},\mathscr{U}(\check{\mathfrak{n}}^-)^{\mu})^{\vee} \otimes \overline{\mathfrak{p}}_*\IC_{\overline{\Bun}_B}, \] or by adjunction \[ H^{\bullet}(X^{\mu},\mathscr{U}(\check{\mathfrak{n}}^-)^{\mu}) \otimes \overline{\mathfrak{p}}_*\IC_{\overline{\Bun}_B} \longrightarrow \overline{\mathfrak{p}}_*\IC_{\overline{\Bun}_B}. \] If $\mu$ is a coroot, then as before we have a morphism $H^{\bullet}(X) \to H^{\bullet}(X^{\mu},\mathscr{U}(\check{\mathfrak{n}}^-)^{\mu})$, which defines the action of $\check{\mathfrak{n}}^-_{-\mu} \otimes H^{\bullet}(X) \cong H^{\bullet}(X)$ on $\overline{\mathfrak{p}}_*\IC_{\overline{\Bun}_B}$.

\subsection{} Fix a coroot $\mu$. Recall the subquotient $\mathscr{M}$ of $\Psi(\mathscr{W})$ from Lemma \ref{threestep}. The action of $\check{\mathfrak{g}}$ on $\overline{\mathfrak{p}}_*\IC_{\overline{\Bun}_B}$ relates to our problem through the following key lemma, whose proof will occupy this subsection.

\begin{lemma}

The sheaf $\mathscr{M}$ does not descend to $\overline{\Bun}_{N^{\omega}}/\G_m$.

\label{desclem}

\end{lemma}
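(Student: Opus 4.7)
The plan is to argue by contradiction, using the action of $\check{\mathfrak{g}} \otimes H^{\bullet}(X)$ on $\overline{\mathfrak{p}}_*\IC_{\overline{\Bun}_B}$ just constructed. By Proposition~\ref{ncprop}(ii), descent of any subquotient of $\Psi(\mathscr{W})$ to $\overline{\Bun}_{N^{\omega}}/\G_m$ is equivalent to the vanishing on it of the induced monodromy endomorphism. Thus it suffices to show that the monodromy $N$ acts nontrivially on $\mathscr{M}$.

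First I would pin down the extension class
\[
\epsilon \in \Ext^1(\IC_{\overline{\Bun}_{N^{\omega}}}, \Delta^{\mu}_* \IC_{X \times_{\Bun_T} \overline{\Bun}_B})
\]
realizing $\mathscr{M}_0$ in terms of the $\check{\mathfrak{g}}$-action on $\overline{\mathfrak{p}}_*\IC_{\overline{\Bun}_B}$. The construction of $\mathscr{M}_0$ in the proof of Lemma~\ref{threestep} involves only the defect-$0$ and defect-$\mu$ layers of the canonical filtration on $\mathfrak{j}_{=0,!}\IC_{\Bun_{N^{\omega}}}$. Matching these layers against the pullback to $\overline{\Bun}_{N^{\omega}}$ of the canonical morphism $\mathfrak{j}_{\geq \mu,!}(\mathscr{U}(\check{\mathfrak{n}})^{\mu} \boxtimes \IC_{\overline{\Bun}_B}) \to \IC_{\overline{\Bun}_B}$, and using the fact that $H^1(\mathscr{U}(\check{\mathfrak{n}})^{\mu}) \cong \IC_X$ since $\mu$ is a coroot, I expect to identify $\epsilon$ with the class produced by a specific element of $\check{\mathfrak{n}}_{\mu} \otimes H^1(X)$ under the action.

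Second I would translate this identification into a non-vanishing statement for $N|_{\mathscr{M}}$. The $T$-action on $\overline{\Bun}_{N^{\omega}}$ couples to the $\check{T}$-grading of $\check{\mathfrak{g}}$ in such a way that an extension class coming from $\check{\mathfrak{n}}_{\mu}$ carries a nonzero weight for the $\G_m$-action via $\gamma$, because $\gamma$ is dominant regular and $\mu$ is a positive coroot. Under the Koszul duality recalled in Section~\ref{intro2} — which relates monodromy on $\Psi(\mathscr{W})$ to the obstruction to $\G_m$-equivariance — such a weighted class forces $N$ to act nontrivially from the $\IC_{\overline{\Bun}_{N^{\omega}}}$ subquotient of $\mathscr{M}$ into $\mathscr{M}_{-1}$, giving the required contradiction.

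The main obstacle will be the second step: rigorously converting the $T$-weight of $\epsilon$ under the $\check{\mathfrak{g}}$-action into a concrete nonvanishing of $N$ on $\mathscr{M}$. This requires carefully tracking the $T$-equivariance of the construction in Section~5.1 alongside the Koszul-dual relation between monodromy and $\G_m$-equivariance, and verifying that they align as expected for a class originating in $\check{\mathfrak{n}}_{\mu} \otimes H^1(X)$; in particular, one must check that the relevant component of the $\check{\mathfrak{h}}$-action — built via pullback from $\pt/T$ — is computed by pairing with $\gamma$ in the expected way.
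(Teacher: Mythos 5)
There is a genuine gap, and it lies at the heart of your second step. Your plan hinges on the claim that the extension class of $\mathscr{M}_0$, once identified with something built from $\check{\mathfrak{n}}_{\mu}\otimes H^{\bullet}(X)$, ``carries a nonzero weight'' for the $\G_m$-action and that this forces the monodromy to act nontrivially. But $\mathscr{M}_0$ (and likewise $\mathscr{M}/\mathscr{M}_{-1}$) is a subquotient of the $T$-equivariant sheaf $\mathfrak{j}_{=0,!}\IC_{\Bun_{N^{\omega}}}$ (resp. $\mathfrak{j}_{=0,*}\IC_{\Bun_{N^{\omega}}}$), so it descends all the way to $\overline{\Bun}_{N^{\omega}}/T$: its extension class is an equivariant class and carries no weight obstruction whatsoever. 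The $\check{T}$-grading of $\check{\mathfrak{g}}$ in the Eisenstein-series action is bookkeeping for the degree of the $B$-bundle and is not the same thing as a $\G_m$-monodromy weight on sheaves over $\overline{\Bun}_{N^{\omega}}$; conflating the two is what makes the step appear to work. Moreover, reducing the lemma to ``the monodromy acts nontrivially on $\mathscr{M}$'' is essentially circular in the context of the second proof of Theorem \ref{main}: that nontriviality is exactly what the lemma is invoked to establish, so it must be proved by an input independent of the monodromy structure.

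What actually makes the argument go is not the class of $\mathscr{M}_0$ alone but the Yoneda composite of the two equivariant classes coming from $\mathscr{M}_0$ and $\mathscr{M}/\mathscr{M}_{-1}$, i.e. the map (\ref{descobgm}) (resp. (\ref{descobt})): this composite vanishes non-equivariantly because $\mathscr{M}$ exists, and its nonvanishing in the equivariant category is precisely the obstruction to descent. To show it is nonzero one extends both two-step pieces to $\overline{\Bun}_B$, pushes forward along $\overline{\mathfrak{p}}$ as in (\ref{eismap}), and identifies the composite with the action of $f_{\mu}e_{\mu}$ on $\overline{\mathfrak{p}}_*\IC_{\overline{\Bun}_B}$; the relation $[e_{\mu},f_{\mu}]=h_{\mu}$ of Proposition 4.9 in \cite{FFKM}, together with the fact that $e_{\mu}f_{\mu}$ dies under (\ref{eismap2}) (it factors through a sheaf supported on the boundary), shows the obstruction equals $-h_{\mu}$ there, and pairing with the cocharacter gives $\langle h_{\mu},\gamma\rangle\neq 0$ (Lemma \ref{desclemt}). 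Your proposal never brings in the quotient extension $\mathscr{M}/\mathscr{M}_{-1}$ (the $f_{\mu}$ side), never forms the composite, and never uses the FFKM commutation relation; without these the nonvanishing cannot be extracted, so the proof as outlined does not close.
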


First, observe that $\mathscr{M}_0$ and $\mathscr{M}/\mathscr{M}_{-1}$ descend to $\overline{\Bun}_{N^{\omega}}/T$ and hence to $\overline{\Bun}_{N^{\omega}}/\G_m$, being subquotients of $\mathfrak{j}_{=0,!}\IC_{\Bun_{N^{\omega}}}$ and $\mathfrak{j}_{=0,*}\IC_{\Bun_{N^{\omega}}}$ respectively. The obstruction to descent of $\mathscr{M}$ to $\overline{\Bun}_{N^{\omega}}/\G_m$ is the resulting composition
\begin{equation}
\Delta^{\mu}_*\IC_{(X \times_{\Bun_T} \overline{\Bun}_B)/\G_m} \longrightarrow \IC_{\overline{\Bun}_{N^{\omega}}/\G_m}[1] \longrightarrow \Delta^{\mu}_*\IC_{(X \times_{\Bun_T} \overline{\Bun}_B)/\G_m}[2].
\label{descobgm}
\end{equation}
Similarly, the obstruction to its descent to $\overline{\Bun}_{N^{\omega}}/T$ is the composition
\begin{equation}
\Delta^{\mu}_*\IC_{(X \times_{\Bun_T} \overline{\Bun}_B)/T} \longrightarrow \IC_{\overline{\Bun}_{N^{\omega}}/T}[1] \longrightarrow \Delta^{\mu}_*\IC_{(X \times_{\Bun_T} \overline{\Bun}_B)/T}[2].
\label{descobt}
\end{equation}

Denote by $\mathring{\Delta}^{\mu} : X \times_{\Bun_T} \Bun_B \to \overline{\Bun}_{N^{\omega}}$ the locally closed embedding given by composing $\Delta^{\mu}$ with the open embedding $X \times_{\Bun_T} \Bun_B \to X \times_{\Bun_T} \overline{\Bun}_B$. Composition with the canonical morphisms \[ \mathring{\Delta}^{\mu}_!\IC_{(X \times_{\Bun_T} \Bun_B)/T} \to \Delta^{\mu}_*\IC_{(X \times_{\Bun_T} \overline{\Bun}_B)/T} \] and \[ \Delta^{\mu}_*\IC_{(X \times_{\Bun_T} \overline{\Bun}_B)/T} \to \mathring{\Delta}^{\mu}_*\IC_{(X \times_{\Bun_T} \Bun_B)/T} \] gives
\begin{equation}
\End(\Delta^{\mu}_*\IC_{(X \times_{\Bun_T} \overline{\Bun}_B)/T}) \longrightarrow H^{\bullet}(X \times \pt/T),
\label{cohomap}
\end{equation}
since the map $(X \times_{\Bun_T} \Bun_B)/T \to X \times \pt/T$ induces an isomorphism on cohomology.

\begin{lemma}

The image of the endomorphism (\ref{descobt}) under (\ref{cohomap}) is \[ -1 \otimes h_{\mu} \in H^0(X) \otimes \mathfrak{h}^* \subset H^2(X \times \pt/T). \]

\label{desclemt}

\end{lemma}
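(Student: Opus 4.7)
The plan is to factor the composition (\ref{descobt}) as a Yoneda product of two $\Ext^1$-classes and identify each factor with a generator of the $\check{\mathfrak{g}}$-action on $\overline{\mathfrak{p}}_*\IC_{\overline{\Bun}_B}$ recalled in the preceding subsection. The first arrow $\Delta^{\mu}_*\IC \to \IC_{\overline{\Bun}_{N^{\omega}}/T}[1]$ is by construction the extension class of $\mathscr{M}_{-1} \subset \mathscr{M}_0$, which arises in Lemma \ref{threestep} from the descending filtration on $\mathfrak{j}_{=0,!}\IC_{\Bun_{N^{\omega}}}$ whose subquotients are supported on the defect strata. Since $\mu$ is a coroot, $\Delta^{\mu}_*\IC$ appears with multiplicity one on the $\mu$-stratum, and the ensuing extension by $\IC_{\overline{\Bun}_{N^{\omega}}/T}$ is precisely the class picked out by the FFKM morphism $\mathfrak{j}_{\geq\mu,!}(\mathscr{U}(\check{\mathfrak{n}})^{\mu} \boxtimes \IC_{\overline{\Bun}_B}) \to \IC_{\overline{\Bun}_B}$ applied to the top cohomology generator $\IC_X = H^1(\mathscr{U}(\check{\mathfrak{n}})^{\mu})$. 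Thus the first arrow corresponds to a chosen generator $e_{\mu}$ of $\check{\mathfrak{n}}_{\mu}$. By Verdier duality, the second arrow $\IC_{\overline{\Bun}_{N^{\omega}}/T}[1] \to \Delta^{\mu}_*\IC[2]$ corresponds to a chosen generator $f_{-\mu}$ of $\check{\mathfrak{n}}^{-}_{-\mu}$.

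The Yoneda product therefore implements the action of the product $e_{\mu} \cdot f_{-\mu}$, viewed as an operator on the cohomology $H^{\bullet}(X \times \pt/T)$-module underlying $\overline{\mathfrak{p}}_*\IC_{\overline{\Bun}_B}$. The reverse composition $f_{-\mu} \cdot e_{\mu}$ factors through a sheaf supported on a different defect stratum and hence vanishes when projected to the Ext group in question, so that only the Serre bracket $[e_{\mu}, f_{-\mu}] = h_{\mu}$ contributes. Consequently the composition (\ref{descobt}) corresponds to the action of $h_{\mu} \in \check{\mathfrak{h}} = \mathfrak{h}^*$, which by the construction of the $\check{\mathfrak{h}} \otimes H^{\bullet}(X)$-action described earlier maps under (\ref{cohomap}) to a scalar multiple of $1 \otimes h_{\mu} \in H^0(X) \otimes \mathfrak{h}^*$. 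The absence of a component in $H^2(X) \otimes 1$ follows because the Cartan action was built from the pullback of $H^{\bullet}(\pt/T)$, which commutes with the cohomological grading on $H^{\bullet}(X)$.

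The main obstacle I anticipate is the first step, namely rigorously matching the extension classes from Lemma \ref{threestep} with the generators $e_{\mu}$ and $f_{-\mu}$: this requires tracing the proof of that lemma carefully and comparing with the FFKM description of $\mathfrak{j}_{=\mu}^!\IC_{\overline{\Bun}_B}$ as $\mathscr{U}(\check{\mathfrak{n}})^{\mu} \boxtimes \IC_{\Bun_B}$. Pinning down the precise scalar as $-1$ is then a matter of fixing conventions, in particular normalizing the generators so that $[e_{\mu}, f_{-\mu}] = h_{\mu}$ and unpacking the Koszul signs in the two-fold Yoneda product together with the Poincar\'e duality $H_{\bullet}(X)[-2] \cong H^{\bullet}(X)$ used to construct the Cartan action.
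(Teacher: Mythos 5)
Your outline follows the paper's strategy quite closely: push the composition (\ref{descobt}) to the Eisenstein series side, recognize it as the composite of the maps giving the $e_{\mu}$- and $f_{\mu}$-actions on $\overline{\mathfrak{p}}_*\IC_{\overline{\Bun}_B}$, kill the opposite-order composite because it factors through the boundary, and invoke the FFKM relation $[e_{\mu},f_{\mu}]=h_{\mu}$. However, there are two genuine gaps. First, (\ref{descobt}) lives on $\overline{\Bun}_{N^{\omega}}/T$, not on $\overline{\Bun}_B$, so before one can apply $\overline{\mathfrak{p}}_*$ and speak of operators on $\overline{\mathfrak{p}}_*\IC_{\overline{\Bun}_B}$ one must produce a class (\ref{obbunb}) on $\overline{\Bun}_B$ restricting to (\ref{descobt}); the paper does this using the extension of $\mathscr{M}_0$ and $\mathscr{M}/\mathscr{M}_{-1}$ to $\overline{\Bun}_B$ together with the ULA property of $\IC_{\overline{\Bun}_B}$ over $\Bun_T$ (Theorem 5.1.5 of \cite{BG1}), which identifies $\Delta^{\mu}_*\IC_{(X\times_{\Bun_T}\overline{\Bun}_B)/T}$ with a shift of the restriction of $\Delta^{\mu}_*\IC_{X\times\overline{\Bun}_B}$. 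You flag the matching of extension classes with $e_{\mu}$, $f_{\mu}$ as the main obstacle, but the base-change/ULA step is a separate necessary ingredient.

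Second, and more seriously, identifying the image of (\ref{descobt}) as the operator $f_{\mu}e_{\mu}$ (hence as $-h_{\mu}$ modulo boundary-supported terms) in $\Hom(\mathfrak{p}_!\IC_{\Bun_B},\mathfrak{p}_*\IC_{\Bun_B})$ does not by itself compute its image under (\ref{cohomap}) in $H^2(X\times\pt/T)$: these are different maps out of $\End(\Delta^{\mu}_*\IC)$, and one needs the commutative square comparing (\ref{cohomap}) with (\ref{eismap}) and (\ref{eismap2}), plus the injectivity of the composite $H^2(X\times\pt/T)\to H^2(X\times\Bun_T)\to R^0\Hom(\mathfrak{p}_!\IC_{\Bun_B},\mathfrak{p}_*\IC_{\Bun_B})$ (checked in the paper by computing that $h\in\check{\mathfrak{h}}$ acts by $\langle h,\lambda\rangle$ on the component $\Bun_B^{\lambda}$ while the generator of $H^2(X)$ goes to the canonical map itself). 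Your stated reason for the absence of an $H^2(X)\otimes 1$ component --- that the Cartan action was built from $H^{\bullet}(\pt/T)$ --- is not an argument; a priori the class (\ref{descobt}) could have such a component, and it is exactly the injectivity statement that rules it out. Finally, the scalar $-1$ is not a matter of convention: it is forced by the fact that the surviving composite is $f_{\mu}e_{\mu}$ rather than $e_{\mu}f_{\mu}$, combined with the normalization $[e_{\mu},f_{\mu}]=h_{\mu}$ verified in \cite{FFKM}, so this sign must be derived, not stipulated.
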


\begin{proof}

Theorem 5.1.5 in \cite{BG1} says that $\IC_{\overline{\Bun}_B}$ is ULA over $\Bun_T$, which implies that the $!$-restriction of $\IC_{\overline{\Bun}_B}$ to $\overline{\Bun}_{N^{\omega}}/T$ is $\IC_{\overline{\Bun}_{N^{\omega}}/T}[\dim T - \dim \Bun_T]$. It follows that the $!$-restriction of $\Delta^{\mu}_*\IC_{X \times \overline{\Bun}_B}$ to $\overline{\Bun}_{N^{\omega}}/T$ is a shift of $\Delta^{\mu}_*\IC_{(X \times_{\Bun_T} \overline{\Bun}_B)/T}$, where we abusively write $\Delta^{\mu} : X \times \overline{\Bun}_B \to \overline{\Bun}_B$ for the similarly-defined finite map. This gives rise to a commutative square \[
\begin{tikzcd}[cramped]
\End(\Delta^{\mu}_*\IC_{X \times \overline{\Bun}_B}) \ar[d] \ar[r] & H^{\bullet}(X \times \Bun_T) \ar[d] \\
\End(\Delta^{\mu}_*\IC_{(X \times_{\Bun_T} \overline{\Bun}_B)/T}) \ar[r] & H^{\bullet}(X \times \pt/T),
\end{tikzcd} \]
where the upper horizontal arrow is defined similarly to (\ref{cohomap}) and the right vertical arrow is $\id_X$ times restriction along $\rho(\omega) : \pt/T \to \Bun_T$. The previous subsection implies that $\mathscr{M}_0$ and $\mathscr{M}/\mathscr{M}_{-1}$ extend to $\overline{\Bun}_B$, giving rise to a morphism
\begin{equation}
\Delta^{\mu}_*\IC_{X \times \overline{\Bun}_B} \longrightarrow \IC_{\overline{\Bun}_B}[1] \longrightarrow \Delta^{\mu}_*\IC_{X \times\overline{\Bun}_B}[2]
\label{obbunb}
\end{equation}
which restricts to (\ref{descobt}) on $\overline{\Bun}_{N^{\omega}}/T$. Thus it suffices to show that the image of (\ref{obbunb}) in $H^2(X \times \Bun_T)$ restricts to $-1 \otimes h_{\mu}$ along $\id_X \times \rho(\omega)$.

Observe that \[ \End(\Delta^{\mu}_*\IC_{X \times \overline{\Bun}_B}) = \Hom(\Delta^{\mu}_*(k_X \boxtimes \IC_{\overline{\Bun}_B}),\Delta^{\mu}_*(\omega_X \boxtimes \IC_{\overline{\Bun}_B}))[-2], \] so $\overline{\mathfrak{p}}_*$ induces a morphism \[ \End(\Delta^{\mu}_*\IC_{X \times \overline{\Bun}_B}) \longrightarrow \Hom(H^{\bullet}(X) \otimes \overline{\mathfrak{p}}_*\IC_{\overline{\Bun}_B},H_{\bullet}(X) \otimes \overline{\mathfrak{p}}_*\IC_{\overline{\Bun}_B})[-2]. \] Composing with the canonical map $k \to H^{\bullet}(X)$ and its dual $H_{\bullet}(X) \to k$, we obtain
\begin{equation}
\End(\Delta^{\mu}_*\IC_{X \times \overline{\Bun}_B}) \longrightarrow \End(\overline{\mathfrak{p}}_*\IC_{\overline{\Bun}_B})[-2].
\label{eismap}
\end{equation}
By construction, the image of (\ref{descobt}) under (\ref{eismap}) coincides with the action of $f_{\mu}e_{\mu} \in U(\mathfrak{\check{g}})$. By composing with the morphisms $\mathfrak{p}_!\IC_{\Bun_B} \to \overline{\mathfrak{p}}_*\IC_{\overline{\Bun}_B}$ and $\overline{\mathfrak{p}}_*\IC_{\overline{\Bun}_B} \to \mathfrak{p}_*\IC_{\Bun_B}$, we obtain
\begin{equation}
\End(\overline{\mathfrak{p}}_*\IC_{\overline{\Bun}_B}) \longrightarrow \Hom(\mathfrak{p}_!\IC_{\Bun_B},\mathfrak{p}_*\IC_{\Bun_B}).
\label{eismap2}
\end{equation}
Note that (\ref{eismap2}) annihilates the endomorphism of $\overline{\mathfrak{p}}_*\IC_{\overline{\Bun}_B}$ given by the action of $e_{\mu}f_{\mu}$, since it factors through a sheaf supported on the boundary. Proposition 4.9 in \cite{FFKM} says that the relation $[e_{\mu},f_{\mu}] = h_{\mu}$ holds in $\End(\overline{\mathfrak{p}}_*\IC_{\overline{\Bun}_B})$, which implies that the images of $f_{\mu}e_{\mu}$ and $-h_{\mu}$ under (\ref{eismap2}) coincide.

Now consider the commutative square \[
\begin{tikzcd}[cramped]
\End(\Delta^{\mu}_*\IC_{X \times \overline{\Bun}_B}) \ar[d, "(\ref{eismap})"'] \ar[r, "(\ref{cohomap})"] & H^{\bullet}(X \times \Bun_T) \ar[d] \\
\End(\overline{\mathfrak{p}}_*\IC_{\overline{\Bun}_B})[-2] \ar[r, "(\ref{eismap2})"] & \Hom(\mathfrak{p}_!\IC_{\Bun_B},\mathfrak{p}_*\IC_{\Bun_B})[-2],
\end{tikzcd} \]
where the right vertical morphism is the composition
\begin{align*}
H^{\bullet}(X \times \Bun_T) = &\Hom(\mathring{\Delta}_!^{\mu}\IC_{X \times \Bun_B},\mathring{\Delta}_*^{\mu}\IC_{X \times \Bun_B}) \\ \longrightarrow &\Hom(H^{\bullet}(X) \otimes \mathfrak{p}_!\IC_{\Bun_B},H_{\bullet}(X) \otimes \mathfrak{p}_*\IC_{\Bun_B})[-2] \\ \longrightarrow &\Hom(\mathfrak{p}_!\IC_{\Bun_B},\mathfrak{p}_*\IC_{\Bun_B})[-2].
\end{align*}
Note that $\id_X \times \rho(\omega) : X \times \pt/T \to X \times \Bun_T$ admits a canonical retraction, given by the projection $X \times \Bun_T \to X$ and the evaluation map $X \times \Bun_T \to \pt/T$. So far we have shown that the image of (\ref{descobt}) under the resulting composition \[ R^2\End(\Delta^{\mu}_*\IC_{(X \times_{\Bun_T} \overline{\Bun}_B)/T}) \longrightarrow H^2(X \times \pt/T) \longrightarrow H^2(X \times \Bun_T) \longrightarrow R^0\Hom(\mathfrak{p}_!\IC_{\Bun_B},\mathfrak{p}_*\IC_{\Bun_B}) \] agrees with the image of $-1 \otimes h_{\mu}$ under \[ H^2(X \times \pt/T) \longrightarrow H^2(X \times \Bun_T) \longrightarrow R^0\Hom(\mathfrak{p}_!\IC_{\Bun_B},\mathfrak{p}_*\IC_{\Bun_B}), \] so it suffices to show that the latter composition is injective.

We have $H^2(X \times \pt/T) = \check{\mathfrak{h}} \oplus H^2(X)$, and we have already used the fact that for each $\lambda \in \Lambda$, an element $h \in \check{\mathfrak{h}}$ maps to $\langle h,\lambda \rangle$ times the canonical morphism $\mathfrak{p}_!\IC_{\Bun_B^{\lambda}} \to \mathfrak{p}_*\IC_{\Bun_B^{\lambda}}$. One checks that the canonical generator of $H^2(X)$ maps to canonical map $\mathfrak{p}_!\IC_{\Bun_B} \to \mathfrak{p}_*\IC_{\Bun_B}$ itself, which proves the desired injectivity.

\end{proof}

Note that Lemma \ref{desclemt} already implies that $\mathscr{M}$ does not descend to $\overline{\Bun}_{N^{\omega}}/T$, since $h_{\mu} \neq 0$.

\begin{proof}[Proof of Lemma \ref{desclem}]

The morphism (\ref{descobgm}) induces an element of $H^2(X \times \pt/\G_m)$ in the same way that (\ref{descobt}) gives rise to $-1 \otimes h_{\mu} \in H^2(X \times \pt/T)$. Moreover, these constructions fit into a commutative square \[
\begin{tikzcd}[cramped]
\End(\Delta^{\mu}_*\IC_{(X \times_{\Bun_T} \overline{\Bun}_B)/T}) \ar[d] \ar[r] & H^{\bullet}(X \times \pt/T) \ar[d] \\
\End(\Delta^{\mu}_*\IC_{(X \times_{\Bun_T} \overline{\Bun}_B)/\G_m}) \ar[r] & H^{\bullet}(X \times \pt/\G_m)
\end{tikzcd} \]
where the vertical morphisms are induced by $\gamma$, and in particular (\ref{descobt}) maps to (\ref{descobgm}) along the left vertical morphism. The image of $h_{\mu}$ under $H^2(\pt/T) \to H^2(\pt/\G_m) = k$ is the positive integer $\langle h_{\mu},\gamma \rangle$, so the lemma follows.

\end{proof}

\subsection{} We need another, more elementary lemma. Fix $\mu \in \Lambda^{\pos}$ and $\mathfrak{k} \in \Kost(\mu)$ given by $\mu = \sum n_{\beta} \beta$.

\begin{lemma}

If $\mathscr{L}$ is a nonconstant simple summand of $\boxtimes_{\beta \in R^+} \mathscr{P}_{n_{\beta}}$, then \[ \Delta^!\iota^{\mathfrak{k}}_*\mathscr{L} = 0 = \Delta^*\iota^{\mathfrak{k}}_*\mathscr{L}. \]

\label{plclean}

\end{lemma}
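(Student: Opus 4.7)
The plan is to reduce Lemma \ref{plclean}, via proper base change and K\"unneth, to a single fact about the small diagonal in a symmetric power of $X$, which we then verify by exhibiting $\IC_{X^{(n)}}(W)$ as a direct summand of a $\Sigma_n$-isotypic component of a pushforward from $X^n$.

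First the reduction. The reduced fiber product $X \times_{X^\mu} X^\mathfrak{k}$ is the image of $\Delta^\mathfrak{k} : X \to X^\mathfrak{k}$, $x \mapsto (n_\beta \cdot x)_\beta$: the equation $\sum_\beta \beta \cdot E_\beta = \mu \cdot y$ of colored divisors forces each effective divisor $E_\beta$ to be supported at $y$. Proper base change for the finite map $\iota^\mathfrak{k}$ thus gives $\Delta^! \iota^\mathfrak{k}_* \mathscr{L} \cong \Delta^{\mathfrak{k},!}\mathscr{L}$, and similarly with $\Delta^*$. Factor $\Delta^\mathfrak{k}$ as
\[ X \xrightarrow{\mathrm{diag}} X^{|R^+|} \xrightarrow{\prod_\beta \Delta_{n_\beta}} X^\mathfrak{k}, \]
where $\Delta_n : X \to X^{(n)}$ is the small diagonal $x \mapsto n \cdot x$. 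Applying K\"unneth to the outer map reduces the lemma to: for $n \geq 2$ and $W$ a nontrivial irreducible representation of $\Sigma_n$, both $\Delta_n^! \IC_{X^{(n)}}(W) = 0$ and $\Delta_n^* \IC_{X^{(n)}}(W) = 0$. Any nonconstant simple summand $\mathscr{L} = \boxtimes_\beta \mathscr{L}_\beta$ has at least one factor of this form, with $n_{\beta_0} \geq 2$ since $\Sigma_1$ has no nontrivial irreps.

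For this reduced claim, write $\Delta_n = p \circ \delta$ where $p : X^n \to X^{(n)}$ is the quotient map and $\delta : X \to X^n$ is the full diagonal. Then $p_* \IC_{X^n}$ is a $\Sigma_n$-equivariant semisimple perverse sheaf (decomposition theorem), and its $W$-isotypic component $\mathscr{K}_W := \Hom_{\Sigma_n}(W, p_* \IC_{X^n})$ restricts on $X^{(n)}_{\disj}$ to the local system associated to $W$, so $\IC_{X^{(n)}}(W)$ is a direct summand of $\mathscr{K}_W$. Proper base change for $p$ and $\Delta_n$---whose reduced fiber product is $\delta(X) \cong X$---yields
\[ \Delta_n^* p_* \IC_{X^n} \cong \delta^* \IC_{X^n} = k_X[n], \qquad \Delta_n^! p_* \IC_{X^n} \cong \delta^! \IC_{X^n} = k_X[2-n]. \]
The induced $\Sigma_n$-action on both is trivial, since $\delta(X) \subset X^n$ is pointwise fixed by $\Sigma_n$ and the equivariant structure on $\IC_{X^n}$ is tautological on the underlying constant sheaf. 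Hence $\Delta_n^* \mathscr{K}_W = \Hom_{\Sigma_n}(W, k_X[n]) = (W^*)^{\Sigma_n} \otimes k_X[n] = 0$ for $W$ a nontrivial irrep, and likewise for $\Delta_n^!$; this kills the summand $\Delta_n^* \IC_{X^{(n)}}(W)$ and $\Delta_n^! \IC_{X^{(n)}}(W)$ respectively.

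The one technical point worth flagging is base change through the nonreduced scheme-theoretic fiber product $X \times_{X^{(n)}} X^n$ (of length $n!$ over each point of the diagonal); for constructible sheaves, and by Riemann--Hilbert for regular holonomic D-modules, only the underlying reduction is seen, so the replacement above is legitimate.
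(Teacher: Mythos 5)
Your proof is correct and is essentially the paper's argument: the paper's one-line proof invokes exactly the fact you reduce to --- that the local system on $X^{(n)}_{\disj}$ attached to a nontrivial irreducible $\Sigma_n$-representation extends cleanly over the main diagonal --- and your computation with $p_*\IC_{X^n}$ and base change is a correct verification of that asserted fact (the reduction via base change along $\iota^{\mathfrak{k}}$ and K\"unneth is the same implicit one). The only point deserving one more word is the triviality of the $\Sigma_n$-action on $\Delta_n^! p_*\IC_{X^n}$, which is most painlessly deduced from the $*$-case via the $\Sigma_n$-equivariant Verdier self-duality of $p_*\IC_{X^n}$ (or by noting the action on the normal directions to the diagonal is complex-linear, hence orientation-preserving), rather than from the tautological equivariant structure alone.
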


\begin{proof}

This follows from the fact that the local system on $X^{(n)}_{\disj}$ associated to a nontrivial irreducible $\Sigma_n$-representation extends cleanly over the main diagonal.

\end{proof}

\begin{proof}[Second proof of Theorems \ref{main} and \ref{zmain}]

We proceed as in the first proof, until we have constructed the isomorphism (\ref{zmaineq}) over $Z^{\mu} \setminus \Delta(X)$ and reduced to the case that $\mu$ is a coroot. We showed that in this case $\Psi( \, \! \mathscr{W}_{Z^{\mu}})$ contains $\Delta_*\IC_X$ with multiplicity two but has no other subquotients supported on the main diagonal (in particular, the two sides of (\ref{zmaineq}) agree in the Grothendieck group). Either $\mathfrak{sl}_2$ acts trivially on both copies of $\Delta_*\IC_X$ or they have weights $1$ and $-1$. In order to show that (\ref{zmaineq}) extends to $Z^{\mu}$ we need to rule out the first case, and then show that $\gr \Psi( \, \! \mathscr{W}_{Z^{\mu}})$ is semisimple.

We must rule out the possibility that $\mathfrak{sl}_2$ acts trivially on the subquotient $\mathscr{M}$ from Lemma \ref{threestep}. Since the monodromy endomorphism of $\Psi( \, \! \mathscr{W}_{Z^{\mu}})$ is the obstruction to $\G_m$-equivariance, this would imply that $\mathscr{M}$ is $\G_m$-equivariant, contradicting Lemma \ref{threestep}.

Now we finish the proof that $\gr \Psi( \, \! \mathscr{W}_{Z^{\mu}})$ is semisimple. Using the $\mathfrak{sl}_2$-action, it decomposes into the direct sum of its isotypic components, indexed by the irreducible $\mathfrak{sl}_2$-representations. The previous paragraph says that the two copies of $\Delta_*\IC_X$ are subquotients of the $\std$-isotypic component. Thus the other isotypic components have no subquotients supported on the main diagonal, so they are the same as the corresponding isotypic components on the right hand side of (\ref{zmaineq}) and in particular are semisimple. We will show that for any simple subquotient $\mathscr{L} \neq \Delta_*\IC_X$ of the $\std$-isotypic component we have \[ \Ext^1(\Delta_*\IC_X,\mathscr{L}) = 0 = \Ext^1(\mathscr{L},\Delta_*\IC_X), \] from which it follows that $\IC_X^{\oplus 2}$ is a direct summand of the $\std$-isotypic component. Since the the other summand has no subquotients supported on the main diagonal, it is semisimple by the induction hypothesis.

We will show that $H^i(\Delta^!\mathscr{L}) = 0$ for $i \leq 1$, which implies that $\Ext^1(\Delta_*\IC_X,\mathscr{L}) = 0$. The other vanishing follows by applying Verdier duality: since $\gr \Psi( \, \! \mathscr{W}_{Z^{\mu}})$ is Verdier self-dual, so is its $\std$-isotypic component. Observe that $\mathscr{L}$ has the form $^{\prime}\mathfrak{j}^{\mu}_{=\nu,!*} \ \! ^{\prime}\mathfrak{m}^{\mu,\Delta}_{\nu}\mathscr{F}$ for some $0 < \nu \leq \mu$ and a simple summand $\mathscr{F}$ of $\mathscr{P}^{\nu}$. The case $\nu = 0$ is excluded because then $\mathscr{L} = \IC_{Z^{\mu}}$ has weight $0$.

First suppose that $\nu < \mu$. Then we have \[ \Delta^! \, \! ^{\prime}\mathfrak{j}^{\mu}_{=\nu,!*} \ \! ^{\prime}\mathfrak{m}^{\mu,\Delta}_{\nu}\mathscr{F} \tilde{\longrightarrow} \Delta^!\mathscr{F} \otimes^! \Delta^!\IC_{Z^{\mu - \nu}}. \] Since $\Delta^!\IC_{Z^{\mu - \nu}}$ is concentrated in cohomological degrees $\geq 1$ (see \cite{BG2}) and $\Delta^!\mathscr{F}$ is concentrated in degrees $\geq 0$, and both complexes have lisse cohomology sheaves, their $!$-tensor product is concentrated in degrees $\geq 2$ as desired.

Finally, we address the case $\nu = \mu$, where $\mathscr{F} = \mathscr{L}$. By Lemma \ref{plclean}, we can assume $\mathscr{L}$ is a summand of $\add_*\IC_{\prod X^{(n_{\beta})}}$ for some Kostant partition $\mu = \sum n_{\beta} \beta$. If $\sum n_{\beta} \geq 3$ then the claim follows by base change. By assumption $\mathscr{L} \neq \Delta_*\IC_X$, so $\sum n_{\beta} > 1$. This leaves only the case $\sum n_{\beta} = 2$, and since $\mu$ is a coroot the only possibility is that $\mu = \beta_1 + \beta_2$ is a sum of two distinct coroots. As shown in Lemma \ref{sumsimpex}, in this case the $\std$-isotypic component of $\gr \Psi(\mathscr{W}_{Z^{\mu}})$ is just $\IC_X^{\oplus 2}$.

\end{proof}

\label{mainproof2}

\end{document}